\newtheorem{maintheorem}{Theorem}
\newtheorem{maincorollary}{Corollary}
\newtheorem{theorem}{Theorem}[section]
\newtheorem{proposition}[theorem]{Proposition}
\newtheorem{lemma}[theorem]{Lemma}
\newtheorem{corollary}[theorem]{Corollary}
\newtheorem*{firstcorollary}{Corollary~\ref{coro:inf_pi1}}
\newtheorem*{secondcorollary}{Corollary~\ref{coro:highdistance}}
\newtheorem*{generalgenus2theorem}{Theorem~\ref{thm:genus2case}}
\newtheorem*{S1S2theorem}{Theorem~\ref{thm:S1timesS2}}
\newtheorem*{genus0theorem}{Theorem~\ref{thm:S3} part (1)}
\newtheorem*{genus1theorem}{Theorem~\ref{thm:S3} part (2) and Theorem~\ref{thm:lens_spaces} parts (1a), (2a), and (3a)}
\newtheorem*{genus2theorem}{Theorem~\ref{thm:S3} part (3) and Theorem~\ref{thm:lens_spaces} parts (1b), (2b), and (3b)}
\newtheorem*{threefibertheorem}{Theorem~\ref{thm:elliptics}}
\newcommand{\longpage}{\enlargethispage{\baselineskip}}
\newcommand{\Diff}{\operatorname{Diff}}
\newcommand{\diff}{\operatorname{diff}}
\newcommand{\Exp}{\operatorname{Exp}}
\newcommand{\genus}{\operatorname{genus}}
\newcommand{\Imb}{\operatorname{Imb}}
\newcommand{\Img}{\operatorname{Img}}
\newcommand{\isom}{\operatorname{isom}}
\newcommand{\Isom}{\operatorname{Isom}}
\newcommand{\Maps}{C^\infty}
\newcommand{\Mod}{\operatorname{Mod}}
\newcommand{\Norm}{\operatorname{Norm}}
\newcommand{\proj}{\operatorname{proj}}
\newcommand{\Sect}{\operatorname{Sect}}
\newcommand{\TExp}{\operatorname{TExp}}
\newcommand{\rel}{\,\text{rel}\;}
\renewcommand{\O}{\operatorname{O}}
\newcommand{\SO}{\operatorname{SO}}
\newcommand{\C}{\operatorname{{\mathbb C}}}
\newcommand{\R}{\operatorname{{\mathbb R}}}
\newcommand{\RP}{\operatorname{{\mathbb{RP}}}}
\newcommand{\Z}{\operatorname{{\mathbb Z}}}
\renewcommand{\H}{\operatorname{{\mathcal H}}}
\newcommand{\ttimes}{\operatorname{\,\widetilde{\times}\,}}
\newcommand{\Ostar}{\O(2)^*}
\newcommand{\Dih}{\operatorname{Dih}}
\begin{document}

\title[The space of Heegaard splittings]{The space of Heegaard splittings}

\author{Jesse Johnson}
\address{Department of Mathematics\\
Oklahoma State University\\
Stillwater, Oklahoma 74078\\
USA} 
\email{jjohnson@math.okstate.edu}
\urladdr{www.math.okstate.edu/$_{\widetilde{\phantom{n}}}$jjohnson/}
\thanks{The first author was supported in part by NSF MSPRF 0606368}

\author{Darryl McCullough}
\address{Department of Mathematics\\
University of Oklahoma\\
Norman, Oklahoma 73019\\
USA} 
\email{dmccullough@math.ou.edu}
\urladdr{www.math.ou.edu/$_{\widetilde{\phantom{n}}}$dmccullough/}
\thanks{The second author was supported in part by NSF grant DMS-08082424}
\subjclass[2000]{Primary: 57M99, Secondary: 57M50}

\date{\today}

\keywords{3-manifold, Heegaard, splitting, elliptic, lens space}

\begin{abstract}For a Heegaard surface $\Sigma$ in a closed orientable 
$3$-manifold $M$, $\H(M,\Sigma)=\Diff(M)/\Diff(M,\Sigma)$ is the space of
Heegaard surfaces equivalent to the Heegaard splitting $(M,\Sigma)$. Its
path components are the isotopy classes of Heegaard splittings equivalent
to $(M,\Sigma)$. We describe $H(M,\Sigma)$ in terms of $\Diff(M)$ and the
Goeritz group of $(M,\Sigma)$. In particular, for hyperbolic $M$ each path
component is a classifying space for the Goeritz group, and when the
(Hempel) distance of $(M,\Sigma)$ is greater than $3$, each path component
of $\H(M,\Sigma)$ is contractible. For splittings of genus $0$ or $1$, we
determine the complete homotopy type (modulo the Smale Conjecture for $M$
in the cases when it is not known).
\end{abstract}

\maketitle

Let $M$ be a closed, orientable $3$-manifold, not necessarily irreducible,
and suppose that $\Sigma$ is a Heegaard surface in $M$. The space
$\H(M,\Sigma)$ of Heegaard splittings equivalent to $(M,\Sigma)$ is defined
to be the space of left cosets $\Diff(M)/\Diff(M,\Sigma)$, where
$\Diff(M,\Sigma)$ is the subgroup of $\Diff(M)$ consisting of
diffeomorphisms taking $\Sigma$ onto $\Sigma$. In other words, this is the
space of images of $\Sigma$ under diffeomorphisms of~$M$.

We will denote the homotopy groups $\pi_i(\H(M,\Sigma))$ by
$\H_i(M,\Sigma)$. In particular, $\H_0(M,\Sigma)$ is the set of isotopy
classes of Heegaard splittings equivalent to $(M,\Sigma)$. In the present
work, we focus on the groups $\H_i(M,\Sigma)$ for~$i\geq 1$. (Note that
$\H_i(M,\Sigma)$ is independent of the basepoint chosen, because $\Diff(M)$
acts transitively on $\H(M,\Sigma)$ and consequently any two path
components are homeomorphic. We use the identity map $1_M$, or more
strictly speaking, the coset $1_M\Diff(M,\Sigma)$, as our implicit choice
of basepoint of $\H(M,\Sigma)$.)

As one would expect, $\H(M,\Sigma)$ is closely related to $\Diff(M)$. When
the genus of $\Sigma$ is at least $2$, the connected components of
$\Diff(\Sigma)$ are contractible, leading to our first main result.
\begin{maintheorem} Suppose that $\Sigma$ has genus at least 
$2$. Then $\pi_q(\Diff(M))\to \H_q(M,\Sigma)$ is an isomorphism for 
$q\geq 2$, and there are exact sequences
\begin{gather*}
1\to \pi_1(\Diff(M))\to \H_1(M,\Sigma)\to G(M,\Sigma) \to 1\ ,\\
1\to G(M,\Sigma)\to \Mod(M,\Sigma)\to \Mod(M)\to \H_0(M, \Sigma)\to 1\ .
\end{gather*}\par
\label{thm:genus2case}
\end{maintheorem}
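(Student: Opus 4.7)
The plan is to read the theorem off the long exact sequence of homotopy groups of the principal fibration
\begin{equation*}
\Diff(M,\Sigma) \to \Diff(M) \to \H(M,\Sigma)
\end{equation*}
built into the definition $\H(M,\Sigma) = \Diff(M)/\Diff(M,\Sigma)$. Everything will follow once I show that $\pi_q(\Diff(M,\Sigma)) = 0$ for $q \geq 1$: the six-term tail of the long exact sequence is then
\begin{equation*}
1 \to \pi_1(\Diff(M)) \to \H_1(M,\Sigma) \to \Mod(M,\Sigma) \to \Mod(M) \to \H_0(M,\Sigma) \to 1,
\end{equation*}
after identifying $\pi_0(\Diff(M,\Sigma)) = \Mod(M,\Sigma)$ and $\pi_0(\Diff(M)) = \Mod(M)$, and after recognizing the Goeritz subgroup $G(M,\Sigma)$ as the image of $\H_1(M,\Sigma)$, equivalently the kernel of $\Mod(M,\Sigma) \to \Mod(M)$. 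Splitting this six-term sequence at $G(M,\Sigma)$ produces both displayed sequences in the statement, while the vanishing of $\pi_q(\Diff(M,\Sigma))$ in degree $q \geq 2$ yields the asserted isomorphisms in higher dimensions.

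To kill the higher homotopy of $\Diff(M,\Sigma)$, I would use the secondary restriction fibration
\begin{equation*}
\Diff(M \rel \Sigma) \to \Diff(M,\Sigma) \to E \subseteq \Diff(\Sigma),
\end{equation*}
where $E$ denotes the image of restriction to $\Sigma$. The fiber factors as the product $\Diff(H_1 \rel \partial H_1) \times \Diff(H_2 \rel \partial H_2)$ over the two handlebodies bounded by $\Sigma$, and each factor is contractible by Hatcher's theorem on diffeomorphism groups of handlebodies (valid since each $H_i$ has genus $\geq 2$). For the base, a collar-neighborhood construction shows that any isotopy of $\Sigma$ to the identity extends to $M$ while preserving $\Sigma$, so $\Diff_0(\Sigma) \subseteq E$; combined with the reverse inclusion (the identity component of $E$ is contained in the identity component of the ambient $\Diff(\Sigma)$), the identity component of $E$ is exactly $\Diff_0(\Sigma)$, which is contractible by Earle--Eells in genus $\geq 2$. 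Right translation makes all connected components of $\Diff(M,\Sigma)$ homeomorphic to the identity component, so every component is contractible and $\pi_q(\Diff(M,\Sigma)) = 0$ for $q \geq 1$.

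The hardest step is the careful verification that the restriction map $\Diff(M,\Sigma) \to E$ is a locally trivial fibration in the $C^\infty$ topology rather than merely a continuous surjection, and that the product factorization of its fiber is a homeomorphism of topological groups. Both points require a Cerf--Palais style isotopy-extension argument that builds local sections from tubular neighborhoods of $\Sigma$ inside $M$ and from collars of $\partial H_i$ inside each handlebody. Once the bundle structure is in place, the two deep inputs --- Hatcher's contractibility of $\Diff(H \rel \partial H)$ for positive-genus handlebodies, and Earle--Eells contractibility of $\Diff_0(\Sigma)$ for genus $\geq 2$ --- assemble into the theorem.
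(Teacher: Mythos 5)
Your proposal is correct and follows essentially the same route as the paper: the Palais--Cerf fibration $\Diff(M,\Sigma)\to\Diff(M)\to\H(M,\Sigma)$, the restriction fibration to $\Diff(\Sigma)$ whose fiber is built from the rel-boundary diffeomorphism groups of the two handlebodies, and the same two deep inputs (Earle--Eells for $\diff(\Sigma)$ and Hatcher--Ivanov for Haken manifolds with boundary). The only cosmetic differences are that the paper works with the identity component $\diff(M,\Sigma)$ and the covering $\Diff(M)/\diff(M,\Sigma)\to\H(M,\Sigma)$ rather than the full long exact sequence, and that it only needs contractibility of the identity components of the fiber and base --- so the connectivity of $\Diff(H_i\rel\partial H_i)$ implicit in your product-of-contractible-factors claim (and the genus hypothesis you attach to it, which is really needed only for Earle--Eells on $\Sigma$) is never actually required.
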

\noindent In Theorem~\ref{thm:genus2case}, $\Mod(M)$ and $\Mod(M,\Sigma)$
denote the groups of path components of $\Diff(M)$ and $\Diff(M,\Sigma)$
respectively, and $G(M,\Sigma)$ is the \textit{Goeritz group} of the
Heegaard splitting, defined to be the kernel of $\Mod(M,\Sigma)\to
\Mod(M)$. We remark that for most reducible $M$, $\pi_1(\Diff(M))$ is known
to be non-finitely-generated~\cite{K-M}, suggesting that $\H(M,\Sigma)$ has
a complicated homotopy type in these cases.

When $\pi_1(M)$ is infinite, Theorem~\ref{thm:genus2case} applies to all
cases except the genus-$1$ Heegaard surface in $S^1\times S^2$. To state
our result for that case, denote by $LX$ the space of smooth free loops in
a smooth manifold $X$, that is, the $C^\infty$ maps from $S^1$ to $X$, with
the $C^\infty$ topology. There is a free involution $\alpha\colon LS^2\to
LS^2$ defined by $\alpha(\gamma) = \rho \circ \gamma$, where $\rho\colon
S^2\to S^2$ is the antipodal map. The quotient $LS^2/\langle \alpha\rangle$
can be identified with the connected component of the constant loop
in~$L\RP^2$.
\begin{maintheorem} For the unique genus-$1$ Heegaard surface $\Sigma$
in $S^1\times S^2$, $\H(S^1\times S^2,\Sigma)$ is homotopy equivalent to
$LS^2/\langle \alpha \rangle$.
\label{thm:S1timesS2}
\end{maintheorem}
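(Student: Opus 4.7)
The plan is to build an explicit map $\bar\phi\colon LS^2/\langle\alpha\rangle\to\H(S^1\times S^2,\Sigma)$ and show it is a weak homotopy equivalence. Regarding $S^2\subset\R^3$, for each smooth loop $\gamma\colon S^1\to S^2$ set
\[T_\gamma = \{(t,x)\in S^1\times S^2 : x\cdot\gamma(t)=0\},\]
i.e.~fiberwise the great circle in $\{t\}\times S^2$ perpendicular to $\gamma(t)$. Each $T_\gamma$ is a smoothly embedded torus splitting $S^1\times S^2$ into the two open hemisphere bundles $\{(t,x):\pm\, x\cdot\gamma(t)>0\}$, both of which are solid tori with cores the graphs of $\pm\gamma$; hence $T_\gamma$ is a Heegaard torus equivalent to $\Sigma$. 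Since $T_\gamma=T_{-\gamma}$ by inspection, and $T_{\gamma_1}=T_{\gamma_2}$ forces $\gamma_1(t)=\pm\gamma_2(t)$ fiberwise and so $\gamma_1=\pm\gamma_2$ globally by continuity, the map $\gamma\mapsto T_\gamma$ descends to a continuous injection $\bar\phi$.

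To prove $\bar\phi$ is a weak homotopy equivalence I would compare the orbit-map fibration
\[\Diff(S^1\times S^2,\Sigma)\to\Diff(S^1\times S^2)\to\H(S^1\times S^2,\Sigma)\]
with the evaluation fibration $\Maps(S^1,\SO(3))\to LS^2$ defined by $g\mapsto(t\mapsto g_t(n))$, where $n\in S^2$ is the north pole; its fiber is $\Maps(S^1,\SO(2))$. By Hatcher's theorem, the natural inclusion $\O(2)\times\Maps(S^1,\Diff(S^2))\hookrightarrow\Diff(S^1\times S^2)$ is a homotopy equivalence, and the Smale conjecture for $S^2$ further replaces $\Diff(S^2)$ by $\O(3)$. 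A pointwise rotation $(t,x)\mapsto(t,g_t(x))$ with $g_t\in\SO(3)$ carries the standard Heegaard torus $T_{\gamma_0}$ (with $\gamma_0\equiv n$) to $T_\gamma$ where $\gamma(t)=g_t(n)$, so $\bar\phi$ is compatible with these two fibrations. A separate computation of $\Diff(S^1\times S^2,\Sigma)$, via its restriction to the two complementary solid tori and using the Smale conjecture for $S^1\times D^2$, then identifies the fiber side up to homotopy with $\Maps(S^1,\SO(2))\times(\Z/2)$, where the $\Z/2$ swaps the two solid tori.

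The principal difficulty will be this analysis of $\Diff(S^1\times S^2,\Sigma)$: recognizing that the $\Z/2$ swapping the two solid tori is exactly what produces the $\langle\alpha\rangle$-quotient, showing that the internal $S^1$-rotational symmetries of each solid torus cancel against the $\Maps(S^1,\SO(2))$ fiber of the evaluation map, and verifying that the remaining factors of $\Diff(S^1\times S^2)$ (the $\Diff(S^1)$ part and any discrete mapping-class pieces) contribute trivially after passage to the quotient $\H(S^1\times S^2,\Sigma)$. Once these identifications are in place, the required weak equivalence follows from a diagram chase comparing the long exact sequences of the two fibrations via $\bar\phi$.
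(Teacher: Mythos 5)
Your overall strategy is the same as the paper's: use Hatcher's theorem to replace $\Diff(S^1\times S^2)$ by fiberwise-orthogonal diffeomorphisms, compare the orbit fibration of the standard torus with a loop-space fibration, and identify the orbit of the standard torus with $LS^2/\langle\alpha\rangle$ via the correspondence between a loop $\gamma$ and the torus of fiberwise great circles orthogonal to $\gamma$ (the paper records the inverse of your $\gamma\mapsto T_\gamma$, namely $\gamma(t)=\proj_{S^2}r(t,N)$, the trace of the north pole). So the substance of the review concerns the step you yourself flag as the principal difficulty, because as stated it contains a genuine error.

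The claim that $\Diff(S^1\times S^2,\Sigma)$ is, up to homotopy, $\Maps(S^1,\SO(2))\times(\Z/2)$ is false. The correct statement (Proposition~\ref{prop:easyHatcher}) is that $\Diff(M,T)$ deformation retracts to $R(M,T)\cong\O(2)\times(C_2\times\O(2))\times\Omega\SO(2)$: besides the loop of equator-rotations and the hemisphere swap that you account for, there is a full $\O(2)$ acting on the $S^1$ base and the reflections of each equatorial circle. In particular each component of $\Diff(M,T)$ has $\pi_1\cong\Z\times\Z$, not $\Z$, and there are four times as many components as in $\Maps(S^1,\SO(2))\times(\Z/2)$. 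The same mismatch occurs one level up: $\Maps(S^1,\SO(3))\hookrightarrow\Diff(S^1\times S^2)$ is not a homotopy equivalence (it misses the $\O(2)$ factor on the base and the orientation-reversal of the $S^2$ fibers), so the two fibrations you propose to compare are not linked by weak equivalences on fibers and total spaces, and the long-exact-sequence chase does not close as written. The discrepancies on fiber and on total space do happen to match, which is why the theorem is still true, but making that cancellation precise is essentially equivalent to doing what the paper does: enlarge $\Maps(S^1,\SO(3))$ to the full group $R(M)\simeq\O(2)\times\O(3)\times\Omega\SO(3)$ of Hatcher's theorem, prove that $R(M,T)\to\Diff(M,T)$ is a homotopy equivalence by restricting to $\Diff(T)$ and using contractibility of the components of $\Diff(S^1\times S^2\rel T)$ (Hatcher's Haken theorem applied to the two complementary solid tori), verify that $R(M)\to R(M)/R(M,T)$ is a fibration (this requires an explicit local cross-section, which the paper constructs by hand), and only then identify $R(M)/R(M,T)$ with $LS^2/\langle\alpha\rangle$. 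A smaller gap: for $\bar\phi$ to be surjective --- equivalently, for every $T_\gamma$ to lie in the $\Diff(M)$-orbit of the standard torus --- you must lift an arbitrary smooth loop in $S^2$ to a loop (not merely a path) in $\SO(3)$ after normalizing it near the basepoint; this is the surjectivity argument at the end of the paper's proof and should not be left implicit.
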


We remark that (at least when $X$ has empty boundary) the inclusion
function from $LX$ to the space of all continuous free loops in $X$ (with
the compact-open topology) is a homotopy equivalence (see
A. Stacey~\cite[Theorem 4.6]{Stacey}). The analogous statement holds for
the space $\Omega X$ of smooth based free loops~\cite[Section
4.3]{Stacey}). The map $LX\to X$ given by evaluation at the basepoint is
a locally trivial fibration~\cite[Corollary 4.8]{Stacey}, with fiber
$\Omega X$. Since $LX\to X$ has an obvious section, the exact sequence of
this fibration shows that $\pi_q(LX)\cong \pi_{q+1}(X)\oplus \pi_q(X)$ for
all $q\geq 1$. The homology of $LS^n$ was computed in
W. Ziller~\cite[p.~21]{Ziller} (see also R. Cohen, J. Jones, and
J. Yan~\cite{Cohen-Jones-Yan}); for $n=2$ it is $H_0(LS^2)\cong \Z$,
$H_k(LS^2)\cong \Z$ for $k>0$ odd, and $H_k(LS^2)\cong \Z\oplus \Z/2$ for
$k>0$ even.

When $\pi_1(M)$ is infinite and $M$ is irreducible, all Heegaard splittings
of $M$ have genus at least~2. In addition, apart from one case in which
$\Diff(M)$ has not been fully determined, we know that $\Diff(M)$ has a
very simple homotopy type. Theorem~\ref{thm:genus2case} becomes the
following statement:
\begin{maincorollary} Suppose that $M$ is irreducible and
$\pi_1(M)$ is infinite, and that $M$ is not a non-Haken infranilmanifold.
Then $\H_i(M,\Sigma)=0$ for $i\geq 2$, and there is an exact sequence
\[ 1\to Z(\pi_1(M)) \to \H_1(M,\Sigma) \to G(M, \Sigma)\to 1\ .\]\par
\label{coro:inf_pi1}
\end{maincorollary}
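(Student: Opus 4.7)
The plan is to apply Theorem~\ref{thm:genus2case} and then substitute in the known homotopy type of $\Diff(M)$ for the $3$-manifolds under consideration. First I would rule out the low-genus cases: if $M$ admits a Heegaard splitting of genus $0$ then $M=S^3$, and if it admits one of genus $1$ then $M$ is $S^3$, a lens space, or $S^1\times S^2$. Each of these possibilities is excluded by the hypotheses (finite $\pi_1$ in the first two cases, reducible in the last), so every Heegaard surface in $M$ has genus at least $2$. Theorem~\ref{thm:genus2case} then yields $\pi_q(\Diff(M))\cong \H_q(M,\Sigma)$ for $q\geq 2$ together with the exact sequence
\[1\to \pi_1(\Diff(M))\to \H_1(M,\Sigma)\to G(M,\Sigma)\to 1\ .\]

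Next I would invoke the Smale Conjecture in the forms established for irreducible closed $3$-manifolds with infinite $\pi_1$ outside the excluded non-Haken infranilmanifold case: by the work of Hatcher and Ivanov for Haken and most Seifert fibered manifolds, by Gabai for hyperbolic manifolds, and by case-by-case arguments for the remaining Seifert fibered pieces, the inclusion $\Isom(M)\hookrightarrow \Diff(M)$ is a homotopy equivalence with respect to an appropriate geometric metric, while in the non-geometric Haken case $\Diff_0(M)$ is already known to be contractible. In each of these situations $\Diff_0(M)$ either is contractible or is a torus $T$ whose fundamental group is naturally identified, via the evaluation fibration $\Diff(M)\to M$, with the center $Z(\pi_1(M))$. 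Substituting $\pi_q(\Diff(M))=0$ for $q\geq 2$ and $\pi_1(\Diff(M))\cong Z(\pi_1(M))$ into the conclusions of Theorem~\ref{thm:genus2case} gives exactly the two assertions of the corollary.

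The main obstacle is the third step: verifying case by case, according to the Geometrization type, that the rank of the torus $\Isom_0(M)$ matches the rank of the (free abelian) center of $\pi_1(M)$, and that the identification is induced by the evaluation map. For hyperbolic $M$, and for Haken $M$ that are neither Seifert fibered nor Sol, the center of $\pi_1(M)$ is trivial and $\Diff_0(M)$ is contractible. For Seifert fibered $M$ with infinite non-abelian $\pi_1$, the center is the infinite cyclic regular-fiber class and $\Isom_0(M)\simeq S^1$. For flat $M$ the center is $\Z^d$ with $d\in\{1,2,3\}$ and $\Isom_0(M)\simeq T^d$. Exactly these non-geometric Nil cases are what the hypothesis on infranilmanifolds serves to avoid; once this dictionary is assembled the corollary follows directly from Theorem~\ref{thm:genus2case}.
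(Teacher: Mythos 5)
Your proposal is correct and follows essentially the same route as the paper: establish that every Heegaard surface has genus at least $2$, apply Theorem~\ref{thm:genus2case}, and identify $\diff(M)\simeq (S^1)^k$ with $k$ the rank of $Z(\pi_1(M))$ via Hatcher--Ivanov, Gabai, and the Seifert-fibered results, with the evaluation/trace map giving the identification $\pi_1(\Diff(M))\cong Z(\pi_1(M))$. One tiny slip in your flat-manifold dictionary: the Hantzsche--Wendt manifold has centerless fundamental group, so $d=0$ also occurs there, but this does not affect the argument since the rank of the torus still matches the rank of the center.
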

\noindent Note that when the conclusion of Corollary~\ref{coro:inf_pi1}
holds, each component of $\H(M,\Sigma)$ is aspherical, and if $\pi_1(M)$ is
centerless, is a classifying space $K(G(M,\allowbreak \Sigma),1)$ for the
Goeritz group. As to the excluded cases in Corollary~\ref{coro:inf_pi1}, a
nilmanifold is a $3$-manifold that is a quotient of Heisenberg space by a
torsion-free lattice (topologically these are the $S^1$-bundles over the
torus with nonzero Euler class), and an infranilmanifold is a finite
quotient of a nilmanifold. Non-Haken infranilmanifolds are Seifert-fibered
with base orbifold a $2$-sphere with three cone points of types $(2,4,4)$,
$(2,3,6)$, or $(3,3,3)$. If the components of $\Diff(M)$ turn out to be
homotopy equivalent to $S^1$ for these manifolds, as expected
(see~\cite{McCullough-Soma}), then Corollary~\ref{coro:inf_pi1} will hold
without exclusion.

Corollary~\ref{coro:inf_pi1} applies whenever the (Hempel) distance
$d(M,\Sigma)$ is greater than~$3$. Combined with various results from the
literature, this provides a rather complete description of the homotopy
type of $\H(M,\Sigma)$ for this case:
\begin{maincorollary}\label{coro:highdistance}
If $d(M,\Sigma)>3$ then $\H(M,\Sigma)$ has finitely many components, each
of which is contractible. In fact, the number of components of
$\H(M,\Sigma)$ equals $|\Mod(M)|/|\Mod(M,\Sigma)|$, and if
$d(M,\Sigma)>2\genus(\Sigma)$, then $\H(M,\Sigma)$ is contractible.
\par
\end{maincorollary}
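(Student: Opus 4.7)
The plan is to combine Corollary~\ref{coro:inf_pi1} with rigidity results from the literature on high-distance Heegaard splittings. Since $d(M,\Sigma)>3$ forces $d>2$, Hempel's theorem (together with geometrization) implies $M$ is hyperbolic, and in particular irreducible, not a non-Haken infranilmanifold, with $\pi_1(M)$ infinite and centerless. Corollary~\ref{coro:inf_pi1} therefore applies and yields $\H_i(M,\Sigma)=0$ for $i\geq 2$ together with $\H_1(M,\Sigma)\cong G(M,\Sigma)$, since $Z(\pi_1(M))$ is trivial. Thus the contractibility of each component of $\H(M,\Sigma)$ reduces to verifying that $G(M,\Sigma)=1$.

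For this I would invoke a rigidity theorem from the literature (due to Namazi, Johnson, or Scharlemann--Tomova, in an appropriate combination) asserting that a diffeomorphism of $M$ which takes $\Sigma$ onto $\Sigma$ and is isotopic to the identity in $\Diff(M)$ must be isotopic to the identity through diffeomorphisms preserving $\Sigma$, whenever $d(M,\Sigma)>3$. Equivalently, the kernel of $\Mod(M,\Sigma)\to\Mod(M)$ is trivial. This is the main obstacle and essentially the only non-formal input to the proof; it uses the uniqueness of isotopy classes of Heegaard surfaces close to a high-distance splitting. Granting it, each component of $\H(M,\Sigma)$ has $\pi_i=0$ for all $i\geq 1$ and so is contractible.

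The number of components is then read off from the second exact sequence in Theorem~\ref{thm:genus2case}. Since $M$ is hyperbolic, $\Mod(M)$ is finite, and $\Mod(M,\Sigma)$ is a subgroup of the finite group $\Mod(\Sigma)\times\Mod(M)$ under the natural action, hence also finite. Exactness gives $|\H_0(M,\Sigma)|=|\Mod(M)|\cdot|G(M,\Sigma)|/|\Mod(M,\Sigma)|$, which collapses to $|\Mod(M)|/|\Mod(M,\Sigma)|$ because $G(M,\Sigma)=1$. In particular there are only finitely many components. For the stronger claim that $\H(M,\Sigma)$ is contractible when $d(M,\Sigma)>2\genus(\Sigma)$, I would invoke the Scharlemann--Tomova uniqueness theorem: up to isotopy, $\Sigma$ is the only Heegaard surface of genus at most $\genus(\Sigma)$ in $M$. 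Any self-diffeomorphism of $M$ sends $\Sigma$ to such a surface and so, after isotopy, preserves $\Sigma$; hence $\Mod(M,\Sigma)\to\Mod(M)$ is surjective, $\H_0(M,\Sigma)$ is trivial, and the already-established vanishing of the higher $\H_i$ shows that $\H(M,\Sigma)$ is contractible.
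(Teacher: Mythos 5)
Your proposal is correct and follows essentially the same route as the paper: hyperbolicity from $d>2$ via Hempel--Thompson, Corollary~\ref{coro:inf_pi1} to make each component a $K(G(M,\Sigma),1)$, injectivity of $\Mod(M,\Sigma)\to\Mod(M)$ for $d>3$ (the paper cites \cite{Johnson} for this and also for the surjectivity when $d>2\genus(\Sigma)$, where you instead invoke Scharlemann--Tomova uniqueness --- the same logical input), and the component count from the second exact sequence of Theorem~\ref{thm:genus2case}. One small slip: $\Mod(\Sigma)\times\Mod(M)$ is not finite (the surface mapping class group is infinite for $\genus\geq 2$), but finiteness of $\Mod(M,\Sigma)$ is immediate anyway since you have already shown it injects into the finite group $\Mod(M)$.
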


When $\pi_1(M)$ is finite, $\Diff(M)$ and $\H(M,\Sigma)$ can have more
interesting homotopy types. For these cases, $M$ admits an elliptic
structure, that is, a Riemannian metric of constant sectional curvature
$1$, or equivalently $M$ is a quotient of the standard round $3$-sphere by
a group of isometries acting freely. For elliptic $3$-manifolds, the
(Generalized) Smale Conjecture asserts that the inclusion $\Isom(M)\to
\Diff(M)$ of the subgroup of isometries of $M$ is a homotopy
equivalence. As we will discuss in Section~\ref{sec:smaleconj} below, the
Smale Conjecture is known for some cases, including $S^3$ and lens spaces
other than $\RP^3$, but is open in general. Our computations of
$\H_i(M,\Sigma)$ require this homotopy equivalence, and therefore must be
regarded as modulo the Smale Conjecture for the unknown cases. In the
statements of our remaining results, $C_2$ denotes a cyclic group of
order~$2$.
\begin{maintheorem} For $n\geq 0$ let $\Sigma_n$ be the unique Heegaard 
surface of genus $n$ in $S^3$.
\begin{enumerate}
\item[(1)] $\H(S^3,\Sigma_0)\simeq \RP^3$.
\item[(2)] $\H(S^3,\Sigma_1)\simeq \RP^2\times \RP^2$.
\item[(3)] For $n\geq 2$, $\H_i(S^3,\Sigma_n)\cong \pi_i(S^3\times S^3)$ for
$i\geq 2$, and there is a non-split exact sequence
\[ 1\to C_2 \to \H_1(S^3,\Sigma_n) \to G(S^3,\Sigma_n)\to 1\ .\]
\end{enumerate}
\label{thm:S3}
\end{maintheorem}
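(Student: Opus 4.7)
The plan is to combine Hatcher's Smale Conjecture for $S^3$, which gives $\Diff(S^3)\simeq\O(4)$, with the framework of Theorem~\ref{thm:genus2case}. For parts (1) and (2) the goal is to identify $\H(S^3,\Sigma_n)$ with an explicit homogeneous space of $\O(4)$; for part (3) I apply Theorem~\ref{thm:genus2case} and analyze the resulting group extension. For part (1), realize $\Sigma_0$ as the equatorial $2$-sphere in $S^3 \subset \R^4$. Combining Hatcher's theorem with Smale's theorem $\Diff(D^3,\partial D^3)\simeq *$ (applied on each side of $\Sigma_0$), the inclusion of the linear stabilizer $\O(3)\times\O(1)\hookrightarrow\Diff(S^3,\Sigma_0)$ is a homotopy equivalence. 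Hence $\H(S^3,\Sigma_0)\simeq\O(4)/(\O(3)\times\O(1))$, canonically the space of unoriented lines through the origin in $\R^4$, namely $\RP^3$.

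For part (2), realize $\Sigma_1$ as the Clifford torus $T \subset S^3$; its isometry stabilizer in $\O(4)$ is $(\O(2)\times\O(2))\rtimes C_2$, where the $C_2$ swaps the two solid tori. Using Hatcher's theorem together with the known homotopy type of $\Diff$ of a solid torus, $\Diff(S^3,T)$ is homotopy equivalent to this stabilizer. Then $\H(S^3,\Sigma_1)$ identifies with the space of unordered orthogonal direct sum decompositions $\R^4 = V \oplus V^{\perp}$ into $2$-planes. The self-dual/anti-self-dual decomposition $\Lambda^2\R^4 = \Lambda^+\oplus\Lambda^-$ identifies the oriented Grassmannian $\operatorname{Gr}^+(2,4)$ with $S^2\times S^2$; reversing the orientation of $V$ acts as $(x,y)\mapsto(-x,-y)$, and swapping $V\leftrightarrow V^{\perp}$ acts as $(x,y)\mapsto(x,-y)$. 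The quotient by the resulting $C_2\times C_2$ action is $\RP^2\times\RP^2$.

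For part (3), assume $n\geq 2$, so Theorem~\ref{thm:genus2case} applies. Since the identity component of $\O(4)$ has universal cover $S^3\times S^3$, $\pi_q(\Diff(S^3))\cong\pi_q(S^3\times S^3)$ for $q\geq 2$, yielding the stated higher-homotopy isomorphism. For $q=1$, $\pi_1(\Diff(S^3))\cong C_2$, and the first exact sequence of Theorem~\ref{thm:genus2case} becomes $1\to C_2\to\H_1(S^3,\Sigma_n)\to G(S^3,\Sigma_n)\to 1$.

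The main obstacle is proving that this extension is non-split. Choose $\Sigma_n$ invariant under the rotation $\iota = R_{\pi}$ of $\R^4$ by $\pi$ in a fixed $2$-plane $P \subset \R^4$; then $\iota$ restricts to the hyperelliptic involution of $\Sigma_n$, and since $\iota\in\SO(4)$ is isotopic to the identity in $\Diff(S^3)$ but acts non-trivially on $\Sigma_n$, it represents a non-trivial order-$2$ element of $G(S^3,\Sigma_n)$. The path $R_\theta$, $0\leq\theta\leq\pi$, from $1_{S^3}$ to $\iota$ in $\SO(4)$ projects to a loop in $\H(S^3,\Sigma_n)$ representing a lift $\widetilde\iota \in \H_1(S^3,\Sigma_n)$ of $\iota$. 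The lift of $\widetilde\iota^{\,2}$ to $\Diff(S^3)$ starting at $1_{S^3}$ is obtained by concatenating this path with its $\iota$-translate, giving the full rotation $R_\theta$, $0\leq\theta\leq 2\pi$; this is a loop at $1_{S^3}$, and in $\SO(4)$ it lifts to a path in $S^3\times S^3$ between the two preimages of the identity, hence represents the non-trivial element of $\pi_1(\SO(4))\cong C_2$. Since the kernel $C_2$ is central of order $2$, any two lifts of $\iota$ have the same square, so no lift is an involution and the extension is non-split.
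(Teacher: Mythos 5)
Your argument is correct and follows essentially the same route as the paper: the genus-$0$ and genus-$1$ cases are reduced, via the Smale Conjecture and the contractibility of $\Diff(D^3\rel\partial D^3)$ and of the components of the diffeomorphism group of a solid torus rel boundary, to the homogeneous spaces $\O(4)/(\O(3)\times\O(1))$ and $\O(4)/((\O(2)\times\O(2))\rtimes C_2)$, and the genus-$n\geq 2$ case to Theorem~\ref{thm:genus2case} together with the half-rotation loop ending at the hyperelliptic involution, whose square generates $\pi_1(\SO(4))\cong C_2$. The only cosmetic differences are that in part (2) you identify the coset space as $(S^2\times S^2)/(C_2\times C_2)\cong\RP^2\times\RP^2$ via the self-dual/anti-self-dual description of the oriented Grassmannian, where the paper computes it quaternionically as $S^3/\Ostar\times S^3/\Ostar$, and that your non-splitting argument (no lift of the hyperelliptic involution can have order $2$, since all lifts share the same nontrivial square) is equivalent to the paper's observation that a splitting would force the generator of the central $C_2$ not to be a square.
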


\begin{maintheorem} Let $L=L(m,q)$ be a lens space with $m\geq 2$ and $1\leq
q\leq m/2$. Assume, if necessary, that $L$ satisfies the Smale Conjecture.
For $n\geq 1$, let $\Sigma_n$ be the unique Heegaard surface of genus~$n$
in $L$.
\begin{enumerate}
\item If $q\geq 2$, then
\begin{enumerate}
\item $\H(L,\Sigma_1)$ is contractible.
\item For $n\geq 2$, $\H_i(L,\Sigma_n)=0$ for $i\geq 2$, and there is an
exact sequence
\[ 1\to \Z\times \Z\to \H_1(L,\Sigma_n) \to G(L,\Sigma_n)\to 1\ .\]
\end{enumerate}
\item If $m>2$ and $q=1$, then
\begin{enumerate}
\item $\H(L,\Sigma_1)\simeq \RP^2$.
\item For $n\geq 2$, $\H_i(L,\Sigma_n)\cong \pi_i(S^3)$ for $i\geq 2$, and
there are exact sequences
\[ 1\to \Z\to \H_1(L,\Sigma_n) \to G(L,\Sigma_n)\to 1\]
for $m$ odd, and
\[ 1\to \Z\times C_2\to \H_1(L,\Sigma_n) \to G(L,\Sigma_n)\to 1\]
for $m$ even.
\end{enumerate}
\item If $L=L(2,1)$, then
\begin{enumerate}
\item $\H(L,\Sigma_1)\simeq \RP^2\times \RP^2$.
\item For $n\geq 2$, $\H_i(L,\Sigma_n)\cong \pi_i(S^3\times S^3)$ 
for $i\geq 2$, and there is an exact sequence
\[ 1\to C_2\times C_2\to \H_1(L,\Sigma_n) \to G(L,\Sigma_n)\to 1\ .\]
\end{enumerate}
\end{enumerate}
\label{thm:lens_spaces}
\end{maintheorem}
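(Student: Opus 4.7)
The plan is to combine Theorem~\ref{thm:genus2case} (which handles the bookkeeping for $n\geq 2$) with a direct coset-space analysis for the $n=1$ parts, using the assumed Smale Conjecture to identify $\Diff(L)$ with $\Isom(L)$ up to homotopy. Throughout, write $L=S^3/\langle\sigma\rangle$ with $\sigma(z,w)=(\zeta z,\zeta^{q} w)$ and $\zeta=e^{2\pi i/m}$, and identify $\Isom(L)^{0}$ with the quotient of the $\O(4)$-centralizer of $\sigma$ by $\langle\sigma\rangle$.

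For the parts (1b), (2b), (3b) concerning $n\geq 2$, Theorem~\ref{thm:genus2case} already supplies the isomorphism $\pi_i(\Diff(L))\cong \H_i(L,\Sigma_n)$ for $i\geq 2$ and the exact sequence
\begin{equation*}
1\to\pi_1(\Diff(L))\to \H_1(L,\Sigma_n)\to G(L,\Sigma_n)\to 1\ ,
\end{equation*}
so the remaining work is to compute $\pi_i(\Isom(L))$ case by case. For $q\geq 2$ the centralizer of $\sigma$ is the standard maximal torus, giving $\Isom(L)^{0}\simeq T^2$, $\pi_1=\Z\times\Z$, and $\pi_i=0$ for $i\geq 2$. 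For $L(m,1)$ with $m>2$ the centralizer is $\operatorname{U}(2)$, so $\Isom(L)^{0}\simeq \operatorname{U}(2)/\langle\sigma\rangle$, with $\pi_i\cong \pi_i(S^3)$ for $i\geq 2$. For $\RP^3=L(2,1)$ the element $\sigma=-I$ is central in $\SO(4)$, so $\Isom(L)^{0}\simeq \SO(4)/\{\pm I\}\simeq \SO(3)\times \SO(3)$; hence $\pi_1=C_2\times C_2$ and $\pi_i\cong\pi_i(S^3\times S^3)$ for $i\geq 2$.

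For the genus-$1$ parts (1a), (2a), (3a), I would use the principal fibration $\Isom(L,\Sigma_1)\hookrightarrow \Isom(L)\to \H(L,\Sigma_1)$, noting that $\Sigma_1$ lifts to the Clifford torus $\Sigma^{*}\subset S^3$. A direct calculation in $\operatorname{U}(2)$ shows that the $\O(4)$-stabilizer of $\Sigma^{*}$ is the normalizer $N(T^2)=T^2\rtimes C_2$ of the diagonal torus (the $C_2$ is the Weyl swap of the two solid-torus sides). Intersecting with the $\sigma$-centralizer in each case yields: in (1a) the centralizer already equals $T^2$, so $\H(L,\Sigma_1)$ is a point and hence contractible; in (2a) $\Isom(L)/\Isom(L,\Sigma_1)\simeq \operatorname{U}(2)/N(T^2)\simeq S^2/C_2\simeq\RP^2$; in (3a) the quotient is $(\SO(3)\times\SO(3))/(\O(2)\times\O(2))\simeq \RP^2\times\RP^2$. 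Any extra components of $\Isom(L)$ that preserve $\Sigma_1$ contribute nothing further, since they already lie in $\Isom(L,\Sigma_1)$.

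The hard part is the parity-of-$m$ dichotomy in case (2b). The central extension $1\to\Z\to\pi_1(\operatorname{U}(2)/\langle\sigma\rangle)\to\langle\sigma\rangle\to 1$ is formal, but one must distinguish by parity: for $m$ odd the total group remains cyclic and equal to $\Z$, while for $m$ even the identity $\sigma^{m/2}=-I$ places $\langle\sigma\rangle$ partly inside the $\operatorname{SU}(2)$-factor of $\operatorname{U}(2)$, producing an extra $C_2$-summand. I would also need to verify that this $C_2$ injects into $\H_1(L,\Sigma_n)$ rather than being absorbed by the Goeritz side of the exact sequence of Theorem~\ref{thm:genus2case}, which I expect to do by exhibiting an explicit isometry whose loop in $\Isom(L)$ gives a nontrivial element of $\H_1$ that cannot come from a loop in $\Diff(L,\Sigma_n)$.
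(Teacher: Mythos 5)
Your strategy coincides with the paper's: Theorem~\ref{thm:genus2case} plus the Smale Conjecture reduces the $n\geq 2$ statements to computing $\pi_i(\Isom(L))$, and the genus-$1$ statements are obtained by identifying $\H(L,\Sigma_1)$ with a coset space of isometry groups. Your Lie-theoretic computations land on the right answers (the paper computes $\Norm_+(G_L)/(\Ostar\ttimes\Ostar\cap\Norm(G_L))$ and gets $\RP^2\times\RP^2$, $\RP^2$, and a point, matching your $(\SO(3)\times\SO(3))/(\O(2)\times\O(2))$, $\operatorname{U}(2)/N(T^2)$, and point), and the parity dichotomy for $\pi_1(\isom(L(m,1)))$ via the covering-space extension $1\to\Z\to\pi_1(\operatorname{U}(2)/\langle\zeta I\rangle)\to C_m\to 1$ is sound. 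One remark: once $\pi_1(\Diff(L))\cong\Z\times C_2$ is established, the injectivity of that $C_2$ into $\H_1(L,\Sigma_n)$ is already built into the exact sequence of Theorem~\ref{thm:genus2case}, so the "verification" you defer in your last sentence is vacuous.

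The genuine gap is in the genus-$1$ cases. $\H(L,\Sigma_1)$ is by definition $\Diff(L)/\Diff(L,\Sigma_1)$, not $\Isom(L)/\Isom(L,\Sigma_1)$, so the "principal fibration $\Isom(L,\Sigma_1)\hookrightarrow\Isom(L)\to\H(L,\Sigma_1)$" does not exist until you compare the isometric fibration with the diffeomorphism fibration $\Diff(L,T)\to\Diff(L)\to\Diff(L)/\Diff(L,T)$ and show the induced map on base spaces is a homotopy equivalence. The Smale Conjecture handles the total spaces, but you still must prove that the fiber inclusion $\Isom(L,T)\to\Diff(L,T)$ is a homotopy equivalence. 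This is the paper's Lemma~\ref{lem:Tstabilizer}, and it is a substantive argument: it uses Bonahon's computation of $\Mod(L)$, a homology argument showing that a side-swapping diffeomorphism of $T$ forces $q^2\equiv 1\bmod m$, the restriction fibration $\Diff(L,T)\to\Diff(T)$, Gramain's theorem that $\SO(2)\times\SO(2)\to\diff(T)$ is a homotopy equivalence, and Hatcher's theorem that $\diff(M\rel T)$ is contractible for the Haken complement. Two related imprecisions: the $\SO(4)$-stabilizer of the Clifford torus is not $T^2\rtimes C_2$ but $\Ostar\ttimes\Ostar$, which has four components (represented by $1$, the hyperelliptic involution $\tau=F(j,j)$, $\sigma_+$, and $\sigma_+\tau$); and to conclude that the coset space is connected, hence computable from identity components, you need that every component of $\Isom(L)$ contains an isometry preserving $T$ --- the worry is not about extra components that preserve $\Sigma_1$, but about components containing none, and the paper settles this via the surjectivity of $\Mod(L,T)\to\Mod(L)$.
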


\begin{maintheorem} Let $E$ be an elliptic $3$-manifold, but not $S^3$ or a lens
space. Assume, if necessary, that $E$ satisfies the Smale Conjecture.  Let
$\Sigma$ be a Heegaard surface in $E$.
\begin{enumerate}
\item If $\pi_1(E)\cong D^*_{4m}$, or if $E$ is one of the three manifolds
with fundamental group either $T^*_{24}$, $O^*_{48}$, or $I^*_{120}$, then
$\H_i(E,\Sigma)\cong\pi_i(S^3)$ for $i\geq 2$ and there is an exact
sequence
\[ 1\to C_2\to \H_1(E,\Sigma) \to G(E,\Sigma)\to 1\ .\]
\item If $E$ is not one of the manifolds in Case~(1), that
is, either $\pi_1(E)$ has a nontrivial cyclic direct factor, or $\pi_1(E)$
is a diagonal subgroup of index~$2$ in $D^*_{4m}\times C_n$ or of index~$3$
in $T^*_{48}\times C_n$, then $\H_i(E,\Sigma)=0$ for $i\geq 2$, and
there is an exact sequence
\[ 1\to \Z\to \H_1(E,\Sigma) \to G(E,\Sigma)\to 1\ .\]
\end{enumerate}
\label{thm:elliptics}
\end{maintheorem}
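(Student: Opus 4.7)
The plan is to reduce the statement to a computation of the identity component of the isometry group of $E$. First observe that, since $E$ is elliptic but is neither $S^3$ nor a lens space, $E$ is a prism or polyhedral manifold, each of which has Heegaard genus at least $2$; hence $\Sigma$ has genus at least $2$ and Theorem~\ref{thm:genus2case} applies. Under the (assumed) Smale Conjecture for $E$, the inclusion $\Isom(E)\to \Diff(E)$ is a homotopy equivalence, so the theorem gives isomorphisms $\H_q(E,\Sigma)\cong \pi_q(\Isom_0(E))$ for $q\geq 2$ together with an exact sequence
\[ 1\to \pi_1(\Isom_0(E))\to \H_1(E,\Sigma)\to G(E,\Sigma)\to 1, \]
where $\Isom_0(E)$ denotes the identity component. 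It suffices to determine $\Isom_0(E)$ in each of the two cases.

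Writing $E=S^3/\Gamma$ with $\Gamma$ acting freely by isometries, one has $\Isom_0(E)=Z(\Gamma)_0/(Z(\Gamma)_0\cap \Gamma)$, where $Z(\Gamma)$ is the centralizer of $\Gamma$ in $\Isom(S^3)=\O(4)$. Using the standard double cover $\operatorname{SU}(2)\times \operatorname{SU}(2)\to \SO(4)$, the classical Hopf--Seifert--Threlfall classification realizes each such $\Gamma$ either as a subgroup of a single $\operatorname{SU}(2)$-factor or as a (possibly diagonal or twisted) product with one factor a binary polyhedral group and the other cyclic. The dichotomy in the theorem matches this classification exactly.

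In Case~(1), $\Gamma$ is isomorphic to $D^*_{4m}$, $T^*_{24}$, $O^*_{48}$, or $I^*_{120}$ and is conjugate into a single $\operatorname{SU}(2)$-factor. The opposite $\operatorname{SU}(2)$-factor then lies in $Z(\Gamma)$; because $-1\in \Gamma$ acts trivially after descent to $E$, this factor descends to an effective action of $\SO(3)$ on $E$. The centralizer of a one-sided binary polyhedral subgroup inside its own $\operatorname{SU}(2)$-factor is only $\{\pm 1\}$, so no strictly larger connected subgroup of $\O(4)$ centralizes $\Gamma$. Therefore $\Isom_0(E)\cong \SO(3)$, giving $\pi_1=C_2$ and $\pi_i\cong \pi_i(\SO(3))\cong \pi_i(S^3)$ for $i\geq 2$, which produces the exact sequence of Case~(1).

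In Case~(2), $\Gamma$ meets each $\operatorname{SU}(2)$-factor nontrivially, with a cyclic component $C_n$ appearing on one side. The centralizer of a binary polyhedral factor in its $\operatorname{SU}(2)$ is only $\{\pm 1\}$, while the centralizer of a nontrivial cyclic subgroup is the ambient maximal torus $\operatorname{U}(1)\cong S^1$. For the diagonal subgroups of index $2$ in $D^*_{4m}\times C_n$ or of index $3$ in $T^*_{48}\times C_n$, the same conclusion holds after passing to the finite-index subgroup, since the identity component of the centralizer cannot enlarge. Hence $Z(\Gamma)_0\cong S^1$, and because $Z(\Gamma)_0\cap \Gamma$ is finite, $\Isom_0(E)\cong S^1$, giving $\pi_1=\Z$ and $\pi_i=0$ for $i\geq 2$. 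The main obstacle is handling the diagonal and twisted cases uniformly: one must verify for each such $\Gamma$ that restricting to a finite-index subgroup really does not enlarge the connected centralizer, and check the small exceptional parameters. Granted these verifications, which are a finite concrete computation inside $\SO(4)$, the two stated exact sequences follow by inserting the values of $\pi_*(\Isom_0(E))$ into the sequence from Theorem~\ref{thm:genus2case}.
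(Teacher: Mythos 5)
Your argument is correct and takes essentially the same route as the paper: reduce to Theorem~\ref{thm:genus2case} via the observation that $\Sigma$ has genus at least $2$, use the Smale Conjecture to replace $\Diff(E)$ by $\Isom(E)$, and conclude by identifying $\isom(E)$ as $\SO(3)$ in Case~(1) and $S^1$ in Case~(2). The only difference is that the paper simply reads these identity components off from the tabulated isometry groups computed in \cite{McC}, whereas you sketch the underlying centralizer computation in $\SO(4)$ directly; the verification you defer for the diagonal subgroups is indeed routine, since the centralizer of the preimage $\widetilde{\Gamma}\subset S^3\times S^3$ depends only on its two projections, which remain surjective onto the binary polyhedral and cyclic factors in the diagonal cases.
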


Theorems~\ref{thm:genus2case} and \ref{thm:S1timesS2} are proven in
Sections~\ref{sec:highgenus} and \ref{sec:S1timesS2} respectively, and
Corollaries~\ref{coro:inf_pi1} and~\ref{coro:highdistance} in
Section~\ref{sec:irreducible}. Theorem~\ref{thm:S3} part (1) is proven in
Section~\ref{sec:genus0}. Theorem~\ref{thm:S3} part (2) and the (a) parts of
Theorem~\ref{thm:lens_spaces} are proven as Theorem~\ref{thm:genus1_cases}
in Section~\ref{sec:lens}, and
Theorem~\ref{thm:S3} part (3), the (b) parts of Theorem~\ref{thm:lens_spaces} as
Theorem~\ref{thm:high-genus_cases} in
Section~\ref{sec:elliptics}, along with
Theorem~\ref{thm:elliptics}. The other
sections provide auxiliary material used in the proofs.

The authors are grateful to the referee for thoughtful suggestions that
improved the manuscript.

\section{Spaces of images, mapping class groups, and Goeritz groups}
\label{sec:intro}

In this section and the next, we assume only that $M$ is a closed manifold
and $\Sigma$ is a closed submanifold of positive codimension (although much
of what we say extends to more general contexts). 

A submanifold $\Sigma'$ of $M$ is called an \textit{image} of $\Sigma$ if
there is a diffeomorphism of $M$ carrying $\Sigma$ onto $\Sigma'$. The
images of $\Sigma$ correspond to the left cosets
$\Diff(M)/\Diff(M,\Sigma)$, since if $g,h\in\Diff(M)$, then
$g(\Sigma)=h(\Sigma)$ if and only if $g^{-1}h\in \Diff(M,\Sigma)$.
Therefore we call $\Diff(M)/\Diff(M,\Sigma)$ the \textit{space of images
  equivalent to $\Sigma$,} and denote it by $\Img(M,\Sigma)$. In
particular, when $(M,\Sigma)$ is a Heegaard splitting of a closed
$3$-manifold, $\Img(M,\Sigma)$ is the space of Heegaard splittings
v$\mathcal{H}(M,\Sigma)$ as defined in the introduction.

A Fr\'echet space is a complete metrizable locally convex topological
vector space. The topology of a Fr\'echet space is defined by a countable
collection of seminorms such that $f_j\to f$ if $\|f_j-f\|\to 0$ for each
of the seminorms. A Fr\'echet manifold is a (usually infinite-dimensional)
manifold locally modeled on open subsets of a Fr\'echet space, with smooth
(as maps of the Fr\'echet space) transition functions. Two convenient
references for Fr\'echet spaces and Fr\'echet manifolds are
R. Hamilton~\cite{Hamilton} and A. Kriegl and
P. Michor~\cite{Kriegl-Michor}.

The space of images $\Img(M,\Sigma)$ is a Fr\'echet manifold locally
modeled on the sections close to the zero section from $\Sigma$ to its
normal bundle~\cite[Example 4.1.7]{Hamilton}. It follows that
$\Img(M,\Sigma)$ has the homotopy type of a CW-complex (see for example
Section~2.1 of~\cite{HKMR}).

The \textit{mapping class group $\Mod(M,\Sigma)$ of the pair $(M,\Sigma)$}
is defined to be the discrete group $\Diff(M,\Sigma)/\diff(M,\Sigma)$,
where $\diff(M,\Sigma)$ (and in general, any space of isometries,
diffeomorphisms, or imbeddings whose name begins with a small letter) is
the connected component of the identity diffeomorphism. In particular, when
$\Sigma$ is empty, we write this as $\Mod(M)$ and it becomes the usual
mapping class group. Note that we allow orientation-reversing
diffeomorphisms, when $M$ is orientable, so our $\Mod(M,\Sigma)$ is what is
often called the extended mapping class group. 

The \textit{Goeritz group} of the pair $(M,\Sigma)$ is the kernel
v$G(M,\Sigma)$ of the natural map $\Mod(M,\Sigma)\to \Mod(M)$. When $\Sigma$
has codimension~$1$ and is two-sided in $M$, the \textit{pure} Goeritz
group $G_0(M,\Sigma)$ is defined to consist of the elements of
$G(M,\Sigma)$ that do not interchange the sides of~$\Sigma$. It is a
subgroup of index at most~$2$ in~$G(M,\Sigma)$.

To indicate the subgroup of orientation-preserving, we use a ``+''
subscript, as in $\Diff_+(M)$ or $\Isom_+(S^3)$.

\section{Fibration theorems}
\label{sec:main}

In this section, we will obtain fibrations using a method of
R. Palais~\cite{Palais} and J. Cerf~\cite{Cerf}, which is based on the
following definition. Let $X$ be a $G$-space and $x_0\in X$. A
\textit{local cross-section} for $X$ at $x_0$ is a map $\chi$ from a
neighborhood $U$ of $x_0$ into $G$ such that $\chi(u)x_0= u$ for all $u\in
U$. By replacing $\chi(u)$ by $\chi(u)\chi(x_0)^{-1}$, one may always
assume that $\chi(x_0)= 1_G$. If $X$ admits a local cross-section at each
point, it is said to admit local cross-sections. 

A local cross-section $\chi_0\colon U_0\to G$ at a single point $x_0$
determines a local cross-section $\chi\colon gU_0\to G$ at any point $gx_0$
in the orbit of $x_0$, by the formula $\chi(u)=g\chi_0(g^{-1}u)g^{-1}$,
since then
$\chi(u)(gx_0)=g\chi_0(g^{-1}u)g^{-1}gx_0=g\chi_0(g^{-1}u)x_0=gg^{-1}u=u$. In
particular, if $G$ acts transitively on $X$, then a local cross-section at
any point provides local cross-sections at all points.

From \cite{Palais} we have
\begin{proposition} Let $G$ be a topological group and $X$ a
$G$-space admitting local cross-sections. Then any equivariant map of
a $G$-space into $X$ is locally trivial.
\label{prop:palaisTheoremA}
\end{proposition}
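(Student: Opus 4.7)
The plan is to construct explicit local trivializations for $f\colon Y\to X$ at each point of $X$, using a local cross-section in $X$ to produce an honest product structure on the preimage of a neighborhood. First, for $x_0\in X$ I would pick a local cross-section $\chi\colon U\to G$ on a neighborhood $U$ of $x_0$, and, after replacing $\chi(u)$ by $\chi(u)\chi(x_0)^{-1}$, arrange that $\chi(x_0)=1_G$, so that $\chi(u)\cdot x_0=u$ for every $u\in U$.

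Then I would define $\Phi\colon U\times f^{-1}(x_0)\to f^{-1}(U)$ by $\Phi(u,y)=\chi(u)\cdot y$. Equivariance of $f$ gives $f(\chi(u)\cdot y)=\chi(u)\cdot f(y)=\chi(u)\cdot x_0=u$, so $\Phi$ takes values in $f^{-1}(U)$ and commutes with projection to $U$. For the candidate inverse I would take $\Psi(z)=(f(z),\chi(f(z))^{-1}\cdot z)$; equivariance again shows $\chi(f(z))^{-1}\cdot z$ lies in the fiber $f^{-1}(x_0)$, and a direct computation using only the cross-section identity $\chi(u)\cdot x_0=u$ and the group action verifies that $\Phi\circ\Psi$ and $\Psi\circ\Phi$ are the identity.

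Both $\Phi$ and $\Psi$ are continuous because $\chi$, $f$, and the $G$-action are, so $\Phi$ will be a homeomorphism over $U$ which identifies $f|_{f^{-1}(U)}$ with projection onto the first factor. Since local cross-sections exist at every point of $X$ by hypothesis, the same construction produces such a trivialization at each $x\in X$, proving local triviality.

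The main obstacle, such as it is, is quite mild: I anticipate no serious difficulty beyond checking continuity of $\Psi$, which reduces to continuity of $\chi$ together with the continuity of multiplication and of the action in a topological group. The construction is completely formal once the cross-section is in hand, and follows the argument of Palais~\cite{Palais}; the work of the present paper will lie in verifying the local cross-section hypothesis in the various settings in which it will subsequently be applied to actions of $\Diff(M)$.
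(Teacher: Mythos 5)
Your construction is correct and is exactly what the paper has in mind: the paper cites Palais for this proposition and then remarks that the local coordinates on $\pi^{-1}(U)$ are given by sending $(u,z)\in U\times\pi^{-1}(x_0)$ to $\chi(u)\cdot z$, which is precisely your $\Phi$, with your $\Psi(z)=(f(z),\chi(f(z))^{-1}\cdot z)$ supplying the inverse. Nothing is missing; you have simply written out the routine verification that the paper leaves to the reference.
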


\noindent In fact, when $\pi\colon Y\to X$ is $G$-equivariant, the
local coordinates on $\pi^{-1}(U)$ are just given by sending the point
$(u,z)\in U\times \pi^{-1}(x_0)$ to $\chi(u)\cdot z$.

We continue to assume only that $M$ is a closed manifold and $\Sigma$ is a
closed submanifold of positive codimension. Clearly
$\Diff(M)/\Diff(M,\Sigma)$ and $\Diff(M)/\diff(M,\Sigma)$ are
$\Diff(M)$-spaces.
\begin{theorem} 
$\Diff(M)/\Diff(M,\Sigma)$ and $\Diff(M)/\diff(M,\Sigma)$
admit local $\Diff(M)$ cross-sections.
\label{thm:cross-sections}
\end{theorem}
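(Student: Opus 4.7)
The plan is to produce a local cross-section at a single point of each space, since the $\Diff(M)$-action is transitive on both and, as noted in the paragraph preceding the theorem, one local cross-section on a transitive $G$-space propagates to all points.

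For $\Img(M,\Sigma)=\Diff(M)/\Diff(M,\Sigma)$ at the basepoint $\Sigma$, I would argue as follows. Fix a Riemannian metric on $M$ and let $N$ be a tubular neighborhood of $\Sigma$, identified by the normal exponential with a disk bundle in the normal bundle $\nu\to\Sigma$. By the local model of $\Img(M,\Sigma)$ cited from Hamilton, a neighborhood $U$ of $\Sigma$ is parametrized by sections $s$ lying in a $C^\infty$-neighborhood of the zero section of $\Sect(\nu)$, with $s$ corresponding to the submanifold $\Sigma_s:=s(\Sigma)\subset N\subset M$. I need to produce $\chi(s)\in\Diff(M)$ with $\chi(s)(\Sigma)=\Sigma_s$, continuously in $s$, and with $\chi(0)=1_M$. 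To do so, choose a smooth bump function $\phi$ on $M$ which equals $1$ on a smaller sub-tubular neighborhood of $\Sigma$ and vanishes outside a compact subset of $N$. Each section $s$ determines a canonical vertical vector field $V_s$ on $\nu$, given in local bundle coordinates $(x,v)$ by $V_s(x,v)=s(x)$, viewed as a vector tangent to the fiber $\nu_x$ at $v$. Transporting $V_s$ to $N$ via the exponential, multiplying by $\phi$, and extending by zero yields a compactly supported smooth vector field $\widetilde V_s$ on $M$ depending linearly and continuously on $s$. For $s$ in a sufficiently small $C^\infty$-neighborhood of zero, the time-$1$ flow $\chi(s):=\Exp(\widetilde V_s)$ is a diffeomorphism of $M$; it carries $\Sigma$ onto $\Sigma_s$ (on the sub-tubular neighborhood the flow is pure fiberwise translation); and it depends smoothly on $s$. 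Rescaling by $\chi(0)=1_M$ gives the desired local cross-section.

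For the second quotient $X:=\Diff(M)/\diff(M,\Sigma)$, I would deduce local cross-sections from the ones just constructed by exploiting the projection $\pi\colon X\to\Img(M,\Sigma)$, whose fiber is the discrete group $\Mod(M,\Sigma)=\Diff(M,\Sigma)/\diff(M,\Sigma)$. The map $U\times \Mod(M,\Sigma)\to \pi^{-1}(U)$ sending $(u,[\psi])$ to $\chi(u)\psi\,\diff(M,\Sigma)$ is a continuous bijection, and both $\pi$ and the component-valued map $v\mapsto [\chi(\pi(v))^{-1}v]$ are continuous, so this is a local trivialization of $\pi$. Restricting to the sheet through the basepoint gives an open neighborhood $V$ of $\diff(M,\Sigma)$ in $X$, projecting homeomorphically onto $U$, and the formula $\chi'(v):=\chi(\pi(v))$ defines a continuous cross-section on $V$: the element $\chi(\pi(v))\cdot\diff(M,\Sigma)$ lies in $\pi^{-1}(\pi(v))\cap V$, which consists of the single point $v$.

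The main obstacle is analytic rather than structural: one must verify that $s\mapsto\chi(s)$ is continuous into $\Diff(M)$ in the $C^\infty$ topology and that $\chi(s)$ is in fact a diffeomorphism for all sufficiently small $s$. Both facts reduce to standard smooth dependence of flows on parameters for a smoothly-parametrized Fr\'echet family of compactly supported vector fields, but this is where the substance of the argument lies; the identification of neighborhoods with section spaces and the passage to the $\diff(M,\Sigma)$-quotient are then formal.
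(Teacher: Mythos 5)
Your proposal is correct and follows essentially the same route as the paper: identify a neighborhood of the basepoint with sections of the normal bundle, extend each section to a compactly supported vector field on $M$ via a bump function, and turn it into a diffeomorphism carrying $\Sigma$ to the nearby image (the paper uses the pointwise Riemannian exponential $\TExp$ and parallel transport, citing Palais's Lemmas a--c for the analytic continuity points you flag, where you use the time-$1$ flow of a fiberwise-constant extension). Your treatment of $\Diff(M)/\diff(M,\Sigma)$ via the discrete-fiber projection is a correct elaboration of what the paper dismisses as ``trivial modifications.''
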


\begin{proof} We will argue for $\Diff(M)/\Diff(M,\Sigma)=\Img(M,\Sigma)$, 
since the case of $\Diff(M)/\diff(M,\Sigma)$ requires only trivial
modifications. Since $\Diff(M)$ acts transitively, we need only find a
local cross-section at $1_M\Diff(M,\Sigma)$.

Fix a Riemannian metric on $M$ and a tubular neighborhood $N(\Sigma)$
determined by the exponential map $\Exp\colon \nu_{<\epsilon}(\Sigma)\to
N(\Sigma)\subset M$, where $\nu_{<\epsilon}(\Sigma)$ is the space of normal
vectors of $\Sigma$ of length less than~$\epsilon$. For all $g$ in a
sufficiently small $C^\infty$-neighborhood $V$ of $1_M$ (in fact for all
$g$ sufficiently $C^1$-close to $1_M$) in $\Diff(M)$, the tangent planes to
$g(\Sigma)$ remain almost perpendicular to the tangent planes of the fibers
of $N(\Sigma)$, and consequently $g(\Sigma)$ meets each normal fiber in
$N(\Sigma)$ in exactly one point.

Denote by $\Sect(\Sigma,TM)$ the sections from $\Sigma$ to the restriction
of $TM$ to $\Sigma$, and by $Z$ the zero-section in $\Sect(\Sigma,TM)$ or
in any other space of sections.

The image $W$ of $V$ in $\Img(M,\Sigma)$ is an open neighborhood of
$1_M\Diff(M,\Sigma)$. Define $\Phi\colon W\to \Sect(\Sigma,TM)$ by putting
$\Phi(g\Diff(M,\Sigma))(x)$ equal to the unique vector in $T_xM\cap
\nu_{<\epsilon}(\Sigma)$ that exponentiates to $g(\Sigma)\cap \Exp
(\nu_{<\epsilon}(x))$, where $\nu_{<\epsilon}(x)$ is the fiber of
$\nu_\epsilon(\Sigma)$ at $x$. In particular, $\Phi(1_M\Diff(M,\Sigma))=Z$,
the zero section.

Lemma~c from \cite{Palais} provides a continuous linear map $k\colon
\Sect(\Sigma,TM)\to \Sect(M,TM)$ such that for each $X\in
\Sect(\Sigma,TM)$, $k(X)|_{\Sigma}=X$. In fact, $k$ is defined just by
using parallel translation to push each $X(x)$ to a vector at each of the
points in the normal fiber at $x$, then multiplying by a smooth function
that is $1$ on $\Sigma$ and is $0$ off of $N(\Sigma)$.

Now, define $\TExp\colon \Sect(M,TM)\to \Maps(M,M)$, the space of smooth
maps from $M$ to $M$ with the $C^\infty$-topology, by
$\TExp(X)(x)=\Exp(X(x))$. By Lemmas~a and~b of~\cite{Palais}, $\TExp$ is
continuous and maps a neighborhood of $Z$ into $\Diff(M)$. On a
neighborhood $U$ of $1_M\Diff(M,\Sigma)$ contained in $W$ and small enough
so that $\TExp\circ k\circ \Phi(U)\subset\Diff(M)$, $\TExp\circ k\circ
\Phi$ is a local cross-section. For if $g\Diff(M,\Sigma)\in U$, then by
definition of $\Phi$ we have
\[\Exp\circ \Phi(g\Diff(M,\Sigma))(x)=g(\Sigma)\cap \Exp(\nu_{<\epsilon}(x))\]
for each $x\in \Sigma$. Therefore
\[\Exp\circ \Phi(g\Diff(M,\Sigma))\, 1_M\Diff(M,\Sigma)(\Sigma)
=\Exp\circ \Phi(g\Diff(M,\Sigma))(\Sigma)=g(\Sigma)\]
and consequently $\TExp\circ k\circ
\Phi(g\Diff(M,\Sigma))1_M\Diff(M,\Sigma)=g\Diff(M,\Sigma)$.
\end{proof}

Proposition~\ref{prop:palaisTheoremA} and Theorem~\ref{thm:cross-sections}
give immediately 
\begin{corollary}\label{coro:basic_fibrations}
The quotient maps $\Diff(M)\to
\Diff(M)/\Diff(M,\Sigma)$ and $\Diff(M)\to
\Diff(M)/\diff(M,\Sigma)$ are fibrations.
\end{corollary}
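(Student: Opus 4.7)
The plan is to apply Proposition~\ref{prop:palaisTheoremA} directly, feeding in the cross-sections supplied by Theorem~\ref{thm:cross-sections}. First I would set up the two maps as equivariant maps of $\Diff(M)$-spaces. Let $\Diff(M)$ act on itself by left multiplication and on the coset space $\Diff(M)/\Diff(M,\Sigma)$ (respectively $\Diff(M)/\diff(M,\Sigma)$) by left translation of cosets; with these actions, the natural projection $g\mapsto g\,\Diff(M,\Sigma)$ (respectively $g\mapsto g\,\diff(M,\Sigma)$) is tautologically $\Diff(M)$-equivariant, and the coset space is a transitive $\Diff(M)$-space.

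Next I would invoke Theorem~\ref{thm:cross-sections} to conclude that each coset space admits local $\Diff(M)$ cross-sections. Proposition~\ref{prop:palaisTheoremA} then applies and produces a local trivialization of each projection. Following the recipe stated just after Proposition~\ref{prop:palaisTheoremA}, the trivialization over a neighborhood $U$ of a basepoint $x_0$ is given explicitly by $(u,z)\mapsto \chi(u)\cdot z$, where $\chi\colon U\to\Diff(M)$ is the cross-section of Theorem~\ref{thm:cross-sections} and $z$ ranges over $\pi^{-1}(x_0)$. This fiber is $\Diff(M,\Sigma)$ in the first case and $\diff(M,\Sigma)$ in the second, so the quotient maps are locally trivial, hence fibrations.

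There is essentially no obstacle at this step: the real work, namely the construction of the section via the tubular neighborhood $N(\Sigma)$ and the composite $\TExp\circ k\circ \Phi$, was already done inside Theorem~\ref{thm:cross-sections}, and the Palais--Cerf locality principle of Proposition~\ref{prop:palaisTheoremA} packages that into local triviality. The only thing left is the routine bookkeeping above, which is why the result is stated as an immediate corollary.
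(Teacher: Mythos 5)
Your argument is exactly the paper's: the quotient maps are tautologically $\Diff(M)$-equivariant, Theorem~\ref{thm:cross-sections} supplies the local cross-sections, and Proposition~\ref{prop:palaisTheoremA} converts that into local triviality. The paper records this as an immediate consequence of those two results, so your write-up matches its proof.
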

\noindent Also, the natural map $\Diff(M)/\diff(M,\Sigma)\to
\Diff(M)/\Diff(M,\Sigma)$ is $\Diff(M)$-equivariant, with fiber the
discrete group $\Diff(M,\Sigma)/\diff(M,\Sigma)\allowbreak
=\Mod(M,\Sigma)$, so we have
\begin{corollary} The natural map
\[ \Diff(M)/\diff(M,\Sigma)\to \Diff(M)/\Diff(M,\Sigma)=\Img(\Sigma)\]
is a covering map with fiber $\Mod(M,\Sigma)$.
\label{coro:fibration}
\end{corollary}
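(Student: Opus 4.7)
The plan is to recognize that all of the technical content has already been assembled, and that the covering map statement follows formally from the preceding results together with the observation that the fiber of the natural map is discrete.

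First I would verify that the natural map $\pi\colon \Diff(M)/\diff(M,\Sigma)\to \Diff(M)/\Diff(M,\Sigma)$ is $\Diff(M)$-equivariant, where $\Diff(M)$ acts on both spaces by left multiplication of cosets. This is immediate, since left multiplication by $h\in\Diff(M)$ commutes with the obvious projection $g\,\diff(M,\Sigma)\mapsto g\,\Diff(M,\Sigma)$. I would then identify the fiber over the basepoint $1_M\Diff(M,\Sigma)$: it consists of cosets $g\,\diff(M,\Sigma)$ with $g\in\Diff(M,\Sigma)$, and hence is precisely the coset space $\Diff(M,\Sigma)/\diff(M,\Sigma)$, which by definition is the mapping class group $\Mod(M,\Sigma)$. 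Because $\diff(M,\Sigma)$ is by definition the identity component of $\Diff(M,\Sigma)$, this quotient is discrete.

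Next I would invoke the machinery from the preceding results. Theorem~\ref{thm:cross-sections} provides local $\Diff(M)$ cross-sections on $\Img(M,\Sigma)=\Diff(M)/\Diff(M,\Sigma)$, so Proposition~\ref{prop:palaisTheoremA} applies to the equivariant map $\pi$ and shows that $\pi$ is locally trivial. Explicitly, for a local cross-section $\chi\colon U\to \Diff(M)$ at $1_M\Diff(M,\Sigma)$, the map $(u,z)\mapsto \chi(u)\cdot z$ from $U\times \pi^{-1}(1_M\Diff(M,\Sigma))$ to $\pi^{-1}(U)$ is a homeomorphism, exhibiting local triviality with fiber $\Mod(M,\Sigma)$.

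Finally, a locally trivial fibration with discrete fiber is a covering map in the usual sense, so this finishes the argument. There is no real obstacle here; the substantive work was already done in Theorem~\ref{thm:cross-sections} (construction of local cross-sections via the exponential map and the linear extension $k$ from sections over $\Sigma$ to sections over $M$) and in the fact, recorded in Corollary~\ref{coro:basic_fibrations}, that the quotient maps from $\Diff(M)$ are fibrations. All that Corollary~\ref{coro:fibration} adds is the identification of the fiber as the discrete group $\Mod(M,\Sigma)$, which upgrades ``fibration'' to ``covering map.''
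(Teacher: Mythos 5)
Your argument is correct and is essentially the paper's own: the authors likewise observe that the natural map is $\Diff(M)$-equivariant with discrete fiber $\Diff(M,\Sigma)/\diff(M,\Sigma)=\Mod(M,\Sigma)$ and then apply Proposition~\ref{prop:palaisTheoremA} via the local cross-sections of Theorem~\ref{thm:cross-sections}. Your write-up just makes the local trivialization and the ``locally trivial with discrete fiber equals covering map'' step more explicit.
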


\begin{corollary} For $i\geq 2$, $\pi_i(\Diff(M)/\diff(M,\Sigma))\to
\pi_i(\Img(\Sigma))$ is an isomorphism, and there is an exact sequence
\begin{gather*}
1\to \pi_1(\Diff(M)/\diff(M,\Sigma))\to \pi_1(\Img(M,\Sigma))\\
\to \Mod(M,\Sigma) \to \pi_0(\Diff(M)/\diff(M,\Sigma))\to 
\pi_0(\Img(M,\Sigma))\to 1\ .
\end{gather*}\par
\label{coro:exactseq}
\end{corollary}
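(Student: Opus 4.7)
The plan is to derive the corollary as a formal consequence of Corollary~\ref{coro:fibration} by feeding the covering map
\[ p\colon \Diff(M)/\diff(M,\Sigma)\to \Img(M,\Sigma) \]
into the long exact sequence of homotopy groups of a fibration, based at $1_M\diff(M,\Sigma)$ and its image $1_M\Diff(M,\Sigma)$. A covering map is in particular a Serre fibration, so this sequence is available without further work.

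The fiber of $p$ over the basepoint is $\Diff(M,\Sigma)/\diff(M,\Sigma)=\Mod(M,\Sigma)$, which is discrete. Hence $\pi_i(\Mod(M,\Sigma))=0$ for $i\geq 1$, and $\pi_0(\Mod(M,\Sigma))$ is $\Mod(M,\Sigma)$ itself. Substituting these vanishings into the long exact sequence immediately collapses it into the isomorphism $\pi_i(\Diff(M)/\diff(M,\Sigma))\cong \pi_i(\Img(M,\Sigma))$ for each $i\geq 2$, and produces exactly the tail exact sequence
\[ 1\to \pi_1(\Diff(M)/\diff(M,\Sigma))\to \pi_1(\Img(M,\Sigma))\to \Mod(M,\Sigma)\to \pi_0(\Diff(M)/\diff(M,\Sigma))\to \pi_0(\Img(M,\Sigma))\to 1\]
claimed in the statement.

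The only point deserving a brief justification is that the boundary map $\pi_1(\Img(M,\Sigma))\to \Mod(M,\Sigma)$ is a group homomorphism, rather than merely a map of pointed sets, since the claim asserts exactness as a sequence of groups. This I would handle by noting that $p$ is a regular (Galois) cover: the group $\Mod(M,\Sigma)$ acts freely on $\Diff(M)/\diff(M,\Sigma)$ by left translation, with orbit space precisely $\Img(M,\Sigma)$. The connecting map is then the standard monodromy homomorphism of a regular cover, which is a genuine group homomorphism.

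I do not anticipate any real obstacle; once Corollary~\ref{coro:fibration} is in hand, the present corollary is essentially a repackaging of it via the long exact sequence, and all of the content has already been done in the proof of Theorem~\ref{thm:cross-sections}.
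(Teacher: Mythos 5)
Your proof is correct and is exactly the argument the paper intends: Corollary~\ref{coro:exactseq} is stated without proof, as the long exact homotopy sequence of the covering map of Corollary~\ref{coro:fibration}, with the higher homotopy of the discrete fiber $\Mod(M,\Sigma)$ vanishing. One small slip: the deck action of $\Mod(M,\Sigma)$ on $\Diff(M)/\diff(M,\Sigma)$ is by \emph{right} translation (left translation does not preserve the fibers of the projection to $\Diff(M)/\Diff(M,\Sigma)$, whereas right translation is well defined because $\diff(M,\Sigma)$ is normal in $\Diff(M,\Sigma)$), but this does not affect your conclusion that the connecting map is a homomorphism.
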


For later use, we include the following lemma.
\begin{lemma}
The map $\Diff(M,\Sigma)\to \Diff(\Sigma)$ defined by
restriction is a fibration over its image (which is a union of path
components of $\Diff(\Sigma)$).
\label{lem:restrictToSigma}
\end{lemma}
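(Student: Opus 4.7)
The plan is to apply Proposition~\ref{prop:palaisTheoremA} to the restriction map $R\colon \Diff(M,\Sigma)\to \Diff(\Sigma)$, viewed as mapping onto its image $I=R(\Diff(M,\Sigma))$. Since $R$ is a group homomorphism, $I$ is a subgroup of $\Diff(\Sigma)$, and $R$ is equivariant for the left-translation actions of $\Diff(M,\Sigma)$ on itself and on $I$. This action on $I$ is transitive, so by the remark preceding Proposition~\ref{prop:palaisTheoremA} it suffices to produce a local cross-section of $R$ at $1_\Sigma\in I$: a continuous map $\sigma$ from a neighborhood of $1_\Sigma$ in $\Diff(\Sigma)$ into $\Diff(M,\Sigma)$ with $\sigma(1_\Sigma)=1_M$ and $R(\sigma(f))=f$.

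To construct $\sigma$, fix a Riemannian metric on $M$ and use its restriction to $\Sigma$. For $f\in\Diff(\Sigma)$ sufficiently $C^\infty$-close to $1_\Sigma$, there is a unique small $V_f\in\Sect(\Sigma,T\Sigma)$ with $\Exp_\Sigma(V_f)=f$, depending continuously on $f$ (this is the inverse of the standard exponential chart on $\Diff(\Sigma)$). Let $f_t=\Exp_\Sigma(tV_f)$ be the resulting path from $1_\Sigma$ to $f$, and let $W_t\in\Sect(\Sigma,T\Sigma)$ be the time-dependent vector field whose flow is $f_t$. Viewing $W_t$ as a section of $TM|_\Sigma$ via the inclusion $T\Sigma\subset TM|_\Sigma$, apply Lemma~c of~\cite{Palais} to produce a time-dependent $\tilde W_t\in\Sect(M,TM)$ with $\tilde W_t|_\Sigma=W_t$. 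Because $\tilde W_t$ is tangent to $\Sigma$ along $\Sigma$, its flow $F_t$ on $M$ preserves $\Sigma$ and satisfies $F_t|_\Sigma=f_t$, so $\sigma(f):=F_1$ lies in $\Diff(M,\Sigma)$ and satisfies $R(\sigma(f))=f_1=f$.

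The existence of $\sigma$ shows that $I$ contains a neighborhood of $1_\Sigma$, and since $I$ is a subgroup of $\Diff(\Sigma)$ it is therefore open; an open subgroup of a topological group is also closed, so $I$ is a union of path components of $\Diff(\Sigma)$. Proposition~\ref{prop:palaisTheoremA} then gives that $R\colon \Diff(M,\Sigma)\to I$ is a locally trivial fibration, as required.

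The main technical point is arranging that the chain of constructions---the exponential chart on $\Diff(\Sigma)$, passage from the path $f_t$ to its generating field $W_t$, the Palais extension $W_t\mapsto\tilde W_t$, and the integration of $\tilde W_t$ to the flow $F_t$ on $M$---assembles into a map $\sigma$ that is continuous in the relevant Fr\'echet topologies, and that tangency of $\tilde W_t$ to $\Sigma$ along $\Sigma$ is preserved so that $F_t(\Sigma)=\Sigma$ for all~$t$; both points are standard once one works through the definitions used in the proof of Theorem~\ref{thm:cross-sections}.
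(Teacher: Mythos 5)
Your argument is correct, but it takes a genuinely different route from the paper. The paper's proof is a two-line pullback argument: Palais already proves that $\rho\colon \Diff(M)\to \Imb(\Sigma,M)$, $f\mapsto f|_\Sigma$, is a fibration; the image of $\Diff(M,\Sigma)\to\Diff(\Sigma)$ is identified with the subspace of $\Imb(\Sigma,M)$ consisting of embeddings with image $\Sigma$, and since $\Diff(M,\Sigma)$ is exactly $\rho^{-1}$ of that subspace, the restriction map is the pullback of a fibration and hence a fibration. You instead re-run the Palais--Cerf local cross-section machinery (Proposition~\ref{prop:palaisTheoremA}) directly on $R\colon \Diff(M,\Sigma)\to I$: you correctly observe that $R$ is equivariant for left translation and transitive on $I$, so it suffices to build one local section at $1_\Sigma$, and your construction --- exponential chart on $\Diff(\Sigma)$, generating time-dependent field $W_t$, Palais extension to a field on $M$ tangent to $\Sigma$ along $\Sigma$, integration to $F_t\in\Diff(M,\Sigma)$ --- does produce such a section; the continuity of the composite in the $C^\infty$ topologies and the invariance of $\Sigma$ under the flow are, as you say, standard for compact manifolds. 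What your approach buys is self-containedness and an explicit proof of the parenthetical claim (the image contains a neighborhood of $1_\Sigma$, hence is an open, therefore closed, subgroup, therefore a union of path components), which the paper leaves implicit; what it costs is the extra analytic bookkeeping of time-dependent flows, all of which the pullback argument avoids by citing the restriction fibration $\Diff(M)\to\Imb(\Sigma,M)$ that the paper already needs elsewhere.
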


\begin{proof}
Let $\Imb(\Sigma,M)$ be the space of all imbeddings of $\Sigma$ into $M$
that extend to diffeomorphisms of $M$. From \cite{Palais}, the map
$\rho\colon \Diff(M)\to \Imb(\Sigma,M)$ defined by $\rho(f)=f|_\Sigma$ is a
fibration. We identify the image of $\Diff(M,\Sigma)\to \Diff(\Sigma)$ with
the subspace of elements of $\Imb(\Sigma,M)$ that take $\Sigma$ to
$\Sigma$. Since $\Diff(M,\Sigma)$ is the full preimage of this subspace,
over its image $\Diff(M,\Sigma)\to \Diff(\Sigma)$ is just the pullback
fibration.
\end{proof}

\section{Heegaard splittings of genus at least $2$}
\label{sec:highgenus}

This section contains the proof of Theorem~\ref{thm:genus2case}. We will
use the following theorem of
A. Hatcher~\cite{HatcherHaken,HatcherIncompressible} and
N. Ivanov~\cite{I3,I4}:
\begin{theorem}[Hatcher, Ivanov]\label{thm:hatcher}
Let $M$ be a Haken $3$-manifold.
\begin{enumerate}
\item[(i)] If $\partial M\neq \emptyset$, then 
$\diff(M\rel \partial M)$ is contractible.
\item[(ii)] If $M$ is closed, then there is a 
homotopy equivalence $(S^1)^k\to \diff(M)$, where $k$ is the rank
of the center of~$\pi_1(M)$.
\end{enumerate}
\end{theorem}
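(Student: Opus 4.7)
The plan is to prove both parts by induction on the length of a Haken hierarchy for $M$, using the Cerf-Palais fibrations of Section~\ref{sec:main} to propagate contractibility from stage to stage.

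For part~(i), the base case is a disjoint union of $3$-balls, where $\diff(B^3 \rel \partial B^3)$ is contractible by Hatcher's resolution of the Smale conjecture for $B^3$, imported as an external input. For the inductive step, choose a two-sided properly embedded incompressible surface $F \subset M$, not boundary-parallel, provided by the next stage of the hierarchy. A version of Corollary~\ref{coro:basic_fibrations} in the boundary-fixing setting yields a fibration
\[
\diff(M, F \rel \partial M) \to \diff(M \rel \partial M) \to \Img^h(F, M \rel \partial M)\ ,
\]
where $\Img^h$ denotes the path component of the inclusion $F\hookrightarrow M$. The fiber deformation-retracts onto $\diff(M|F \rel \partial(M|F))$, which is contractible by the inductive hypothesis since $M|F$ has strictly shorter hierarchy. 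Contractibility of the total space therefore reduces to contractibility of the base.

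That reduction is the main obstacle. One must show that $\Img^h(F, M \rel \partial M)$ is contractible, i.e., that any map $S^q \to \Img^h(F, M \rel \partial M)$ extends over $D^{q+1}$. This is the parametrized form of Waldhausen's theorem that any two isotopic incompressible surfaces are ambient isotopic: Haken's normal-form and disk-exchange arguments must be carried out \emph{in families}, after first placing the family in parametrized general position with respect to a fixed triangulation or handle decomposition, and then simplifying the intersection pattern inductively over strata of the parameter space. This parametrized general-position and normal-form package is the hard technical heart of Hatcher's paper and Ivanov's extensions, and it is where essentially all of the work lies.

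For part~(ii), the closed case is reduced to~(i) according to whether the center of $\pi_1(M)$ is nontrivial. If $Z(\pi_1(M))$ has positive rank $k$, then by the Seifert fiber space theorem $M$ admits a Seifert fibration; the generic fiber direction, together with the extra $S^1$ factors when the base orbifold has Euclidean symmetries, yields a map $(S^1)^k \to \diff(M)$. To prove this map is a homotopy equivalence, apply part~(i) to the complement of a vertical incompressible annulus or torus, and reassemble, using the parametrized Waldhausen input of the previous step to show that the space of Seifert fiberings of $M$ is appropriately contractible. If $Z(\pi_1(M))$ is trivial, take a canonical JSJ torus $F\subset M$ (or, in the atoroidal Haken case, any closed incompressible surface) and use the fibration
\[
\diff(M, F) \to \diff(M) \to \Img^h(F, M)\ ;
\]
the fiber is contractible by part~(i) applied to $M|F$, and the base is contractible by the same parametrized Waldhausen argument, giving $\diff(M)\simeq \ast$, matching $k=0$. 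Verifying that each piece of the Seifert or graph-manifold assembly contributes precisely the expected $(S^1)^k$ factor is substantial case-by-case bookkeeping but introduces no new obstruction beyond the parametrized Waldhausen step identified above.
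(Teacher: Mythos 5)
This theorem is not proved in the paper at all: it is imported verbatim from Hatcher~\cite{HatcherHaken,HatcherIncompressible} and Ivanov~\cite{I3,I4}, and the only commentary the paper adds is that the PL statements transfer to the smooth category via the Smale Conjecture for $S^3$. So there is no internal proof to match your argument against; the question is whether your outline actually constitutes a proof, and it does not. Your sketch correctly reproduces the coarse architecture of Hatcher's argument (induction on a Haken hierarchy, cutting along an incompressible surface $F$, and a restriction fibration whose base is a space of images of $F$), but the entire mathematical content of the theorem is concentrated in the step you explicitly decline to carry out: showing that the component $\Img^h(F,M\rel\partial M)$ of the space of incompressible surfaces isotopic to $F$ is contractible. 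That statement \emph{is} Hatcher's theorem on spaces of incompressible surfaces; naming ``parametrized general position and normal form'' is a pointer to the literature, not an argument. A proof attempt that reduces the claimed theorem to an unproved statement of essentially equal depth has a genuine gap, and here the gap is the whole theorem.

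There are also errors in the parts you do spell out. The fiber of the restriction fibration $\diff(M\rel\partial M)\to\Img^h(F,M\rel\partial M)$ does not deformation-retract onto $\diff(M|F\rel\partial(M|F))$ in general: by the analogue of Lemma~\ref{lem:restrictToSigma} it fibers over (components of) $\Diff(F)$ with fiber $\Diff(M\rel F\cup\partial M)$, and the $\Diff(F)$ factor cannot be discarded --- its components are contractible only when $\chi(F)<0$, and precisely when $F$ is a torus or annulus the resulting $S^1$ factors survive and produce the $(S^1)^k$ in part~(ii). Your treatment of part~(ii) inherits this: the assertion that a vertical torus decomposition plus ``the same parametrized Waldhausen argument'' yields exactly $(S^1)^k$ with $k$ the rank of the center requires identifying which circle actions on the pieces glue up to global isotopies of $M$ (e.g.\ $k=1$ for most Seifert-fibered Haken manifolds, $k=3$ only for the $3$-torus, $k=0$ otherwise), and this identification is asserted rather than derived. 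Finally, you do not address the PL-to-smooth passage that the paper itself flags as requiring Hatcher's Smale Conjecture for $S^3$ beyond the base case of the hierarchy induction.
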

\noindent In \cite{HatcherHaken}, the results are stated for PL
homeomorphisms, but the Smale Conjecture for $S^3$, also proven by
Hatcher~\cite{HatcherSmale}, extends the results to the smooth
category (see~\cite{HatcherIncompressible}). For
Theorem~\ref{thm:genus2case}, we will only need part~(i), but part~(ii)
will be used later.

\begin{generalgenus2theorem} Suppose that $\Sigma$ has genus at least 
$2$. Then $\pi_q(\Diff(M))\to \H_q(M,\Sigma)$ is an isomorphism for 
$q\geq 2$, and there are exact sequences
\begin{gather*}
1\to \pi_1(\Diff(M))\to \H_1(M,\Sigma)\to G(M,\Sigma) \to 1\ ,\\
1\to G(M,\Sigma)\to \Mod(M,\Sigma)\to \Mod(M)\to \H_0(M, \Sigma)\to 1\ .
\end{gather*}\par
\end{generalgenus2theorem}

\begin{proof}
Since the genus of $\Sigma$ is at least $2$, $\diff(\Sigma)$ is
contractible \cite{Earle-Eells}. From Lemma~\ref{lem:restrictToSigma},
there is a fibration
\[ \Diff(M\rel \Sigma)\cap \diff(M,\Sigma) \to 
\diff(M,\Sigma)\to \diff(\Sigma)\ .\] Any two elements of $\Diff(M\rel
\Sigma)\cap \diff(M,\Sigma)$ are isotopic preserving $\Sigma$. Since
$\pi_1(\diff(\Sigma))$ is trivial, they are isotopic relative to
$\Sigma$. Therefore $\Diff(M\rel \Sigma)\cap \diff(M,\Sigma)=
\diff(M\rel\Sigma)$, which is contractible using
Theorem~\ref{thm:hatcher}(i), so the fibration shows that
$\diff(M,\Sigma)$ is contractible.

By Corollary~\ref{coro:basic_fibrations}, the quotient map 
\[\Diff(M)\to \Diff(M)/\diff(M,\Sigma)\] 
is a fibration. Since it has contractible fiber, it is a homotopy
equivalence. The assertions of Theorem~\ref{thm:genus2case}
now follow from Corollary~\ref{coro:exactseq}.
\end{proof}

\section{The case of $S^1\times S^2$}
\label{sec:S1timesS2}

In this section, we will prove Theorem~\ref{thm:S1timesS2}. For more
concise notation, we write $M$ for $S^1\times S^2$. In addition, we write
the standard $2$-sphere $S^2$ as $D^2_+\cup D^2_-$, the upper and lower
hemispheres, $N$ or $+N$ for the center point of $D^2_+$, the north pole,
and $-N$ for the south pole. The isometry group of $S^2$ is the orthogonal
group $\O(3)$. By $\O(2)$ we denote the $\O(2)$-subgroup of $\SO(3)$ that
preserves $D^2_+\cap D^2_-$; its subgroup $\SO(2)$ preserves each of
$D^2_+$ and $D^2_-$, while elements of $\O(2)-\SO(2)$ interchange $D^2_+$
and $D^2_-$. Since $\SO(3)$ acts transitively on $S^2$ and the stabilizer
of $N$ is $\SO(2)$, the space of cosets $\SO(3)/\SO(2)$ is homeomorphic to
$S^2$.

In $M$ define $T=S^1\times D^2_+\cap S^1\times D^2_-$. It is a Heegaard
surface in $M$, and the resulting splitting is called the \textit{standard}
genus-$1$ Heegaard splitting of $M$. The following must be well known, but
we include a proof here.
\begin{proposition}\label{prop:uniqueS1timesS2splitting}
Up to isotopy $M$ has a unique Heegaard splitting for each positive genus.
\end{proposition}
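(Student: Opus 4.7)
The plan is to argue by induction on the genus $n$. For the inductive step at $n \geq 2$, I would invoke Haken's lemma: any Heegaard splitting of a reducible $3$-manifold is reducible. Hence there is a $2$-sphere $S \subset M$ meeting $\Sigma$ transversely in a single essential circle. Compressing $\Sigma$ along the two meridian disks $S \cap V_i$ in the two handlebodies produces a Heegaard surface $\Sigma'$ of genus $n-1$, exhibiting $\Sigma$ as a stabilization of $\Sigma'$. By the inductive hypothesis $\Sigma'$ is isotopic to the standard genus-$(n-1)$ surface, and since stabilization of a fixed Heegaard splitting is well-defined up to isotopy (an elementary fact, requiring nothing as strong as Reidemeister--Singer), $\Sigma$ is isotopic to the standard genus-$n$ surface.

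For the base case $n=1$, I would analyze how $M$ is recovered from two solid tori $V_1, V_2$ glued along their boundary torus $T$. The resulting manifold is determined up to homeomorphism by the pair of meridian slopes $\mu_1, \mu_2 \subset T$, and the classification of genus-$1$ closed $3$-manifolds (yielding $S^3$, lens spaces, or $S^1 \times S^2$) shows that $M \cong S^1 \times S^2$ precisely when $\mu_1$ is isotopic to $\mu_2$ on $T$. After an isotopy of $T$ making $\mu_1 = \mu_2$, the union of the two meridian disks forms an embedded $2$-sphere $S \subset M$ meeting $T$ in that single essential circle; cutting $M$ along $S$ produces a manifold obtained by gluing two $3$-balls along an annulus, which is connected, so $S$ is non-separating. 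Laudenbach's uniqueness theorem for non-separating $2$-spheres in $S^1 \times S^2$ then supplies an ambient isotopy of $M$ carrying $S$ to the standard sphere $\{*\} \times S^2$; after this isotopy the image of $T$ meets $\{*\} \times S^2$ in a single circle, and the remaining complement is $S^2 \times [0,1]$, inside which the image of $T$ is readily brought to the standard position.

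The main obstacle will be the base case, specifically controlling the behavior of $T$ under the isotopy taking $S$ to $\{*\} \times S^2$. One must ensure the ambient isotopy of $M$ can be chosen so that $T$ is carried to a surface meeting $\{*\} \times S^2$ in a single circle and agreeing with a standard product form in $S^2 \times [0,1]$; the latter step reduces to the fact that a properly embedded annulus in $S^2 \times [0,1]$ whose boundary components are non-trivial curves in the two boundary spheres is isotopic (rel boundary, after a preliminary isotopy of the boundary curves) to a standard product annulus. Once the base case is established, the inductive step is essentially formal.
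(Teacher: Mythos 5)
There is a genuine gap in your inductive step. A sphere $S$ meeting $\Sigma$ in a single essential circle $c$ exhibits the splitting as \emph{reducible}, not as \emph{stabilized}: a destabilizing pair consists of disks $D_1\subset V_1$, $D_2\subset V_2$ with $|\partial D_1\cap\partial D_2|=1$, whereas your disks satisfy $\partial D_1=\partial D_2=c$. The implication ``reducible $\Rightarrow$ stabilized'' is exactly the nontrivial content here, and it is false as a general principle in $S^1\times S^2$ --- the genus-$1$ splitting itself is reducible but is not a stabilization of anything. Moreover, surgering $\Sigma$ along one of the disks, say $D_1$, does not produce a Heegaard surface of $M$: since $S$ is essential in the prime manifold $S^1\times S^2$ it is non-separating, so the complementary piece $V_2\cup N(D_1)$ contains the non-separating sphere $S$ and hence has nontrivial $\pi_2$ and cannot be a handlebody. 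The missing ingredient is the detour the paper takes: cut $M$ along $S$, cap off the two boundary spheres to obtain a genus-$(n-1)$ Heegaard splitting of $S^3$, invoke Waldhausen's theorem that every positive-genus splitting of $S^3$ is stabilized, and deduce from this that the original splitting of $M$ was stabilized. Your closing observation that stabilization is well-defined up to isotopy is correct and is what finishes the induction, but it only applies once stabilization has actually been established.

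Your base case takes a different route from the paper's (which uses uniqueness of the essential sphere plus the light-bulb trick to put the core of one solid torus in the form $S^1\times\{x\}$), and it is workable in outline, though the step you flag --- controlling $T$ while isotoping $S$ to $\{*\}\times S^2$ and then standardizing the resulting properly embedded annulus in $S^2\times[0,1]$ --- needs care: the phrase ``non-trivial curves in the two boundary spheres'' is vacuous since every circle on a sphere bounds a disk, so you would need to argue instead with the annulus being spanning and, e.g., incompressible after suitable modification. The light-bulb argument avoids these annulus issues entirely.
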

\begin{proof}
Assume first that the Heegaard splitting has genus $1$. By Haken's
Lemma~\cite{Haken} (see also~\cite[Lemma 1.1]{Casson-Gordon}), there is a
$2$-sphere $S$ in $M$ that meets each of the solid tori of the splitting in
a single disk. It is easy to check that $M$ contains a unique essential
$2$-sphere up to isotopy, so we may assume that $S$ is a fiber and each
solid torus of the splitting is a regular neighborhood of a loop crossing
$S$ in a single point. By the well-known light-bulb trick, such a loop is
isotopic to a loop of the form $S^1\times \{x\}$, so the Heegaard splitting
is isotopic to the standard one.

Suppose now that the Heegaard splitting has genus $n>1$, and apply Haken's
Lemma as before to obtain a sphere that intersects each handlebody in a
disk. Compressing the splitting along one of the two disks, then removing a
neighborhood of the essential sphere, one of the handlebodies becomes a
handlebody of genus $n-1$, and the other a handlebody with two
punctures. Filling in the punctures gives a Heegaard splitting of $S^3$ of
genus $n-1$. Waldhausen~\cite{Waldhausen} showed that every positive genus
Heegaard splitting of $S^3$ is a stabilization, which implies that the
original Heegaard splitting of $M$ was a stabilization. Inductively, the
original splitting is obtained by repeated stabilization of the standard
genus-$1$ splitting.
\end{proof}
\noindent Proposition~\ref{prop:uniqueS1timesS2splitting} shows, of course,
that $\H(M,\Sigma)$ is connected for every Heegaard splitting of $M$.

Our proof of Theorem~\ref{thm:S1timesS2} will use the description of
$\Diff(M)$ due to A. Hatcher \cite{HatcherS1timesS2,
  HatcherS1timesS2new}. To set notation, define $R(M)$ to be the subgroup
of $\Diff(S^1\times S^2)$ consisting of the diffeomorphisms that take each
$\{x\}\times S^2$ to some $\{y\}\times S^2$ by an element of the orthogonal
group $\O(3)$ that depends on $x$, and where the diffeomorphism of $S^1$
vsending each $x$ to the corresponding $y$ is an element of~$\O(2)$.

As noted in \cite{HatcherS1timesS2, HatcherS1timesS2new}, $R(M)$ is
homeomorphic (although not isomorphic) to the subgroup $\O(2)\times
\O(3)\times \Omega\SO(3)\subset \Diff(M)$, where $\Omega\SO(3)$ denotes the
space of smooth loops $\gamma\colon S^1\to \SO(3)$ taking the basepoint
$0\in S^1=\R/\Z$ to the identity rotation. The $\O(2)$-coordinate tells the
effect of an element of $R(M)$ on the $S^1$-coordinate of $S^1\times S^2$, the
$\O(3)$-coordinate tells the effect on the $S^2$-coordinate of $\{0\}\times
S^2$, and the element of $\Omega\SO(3)$ tells the deviation from being
constant in the $S^2$-coordinate as the $S^1$-coordinate varies. More
precisely, an element $(f,g,\gamma)\in R(M)$ acts on $M$ by sending
$(t,x)\in S^1\times S^2$ to $(f(t),\gamma(t)(g(x)))$.

\begin{theorem}[A. Hatcher]\label{thm:DiffS1timesS2} The inclusion $R(M)\to
\Diff(M)$ is a homotopy equivalence.
\end{theorem}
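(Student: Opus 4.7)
The plan is to follow Hatcher's strategy from \cite{HatcherS1timesS2, HatcherS1timesS2new}: exhibit $\Diff(M)$ as the total space of a fibration whose base and fiber have computable homotopy type, and then show that the analogous sub-fibration obtained from $R(M)$ matches termwise.

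First, let $\mathcal{S}$ denote the space of smoothly embedded unparametrized $2$-spheres in $M$ isotopic to $S_0=\{0\}\times S^2$. By Proposition~\ref{prop:uniqueS1timesS2splitting}, $\Diff(M)$ acts transitively on $\mathcal{S}$, and a local cross-section argument parallel to Theorem~\ref{thm:cross-sections} yields a fibration
\[
\Diff(M,S_0)\to \Diff(M)\to \mathcal{S},
\]
where $\Diff(M,S_0)$ is the subgroup preserving $S_0$ setwise. The $R(M)$-orbit of $S_0$ inside $\mathcal{S}$ is exactly $\{\{t\}\times S^2:t\in S^1\}\cong S^1$, and the $R(M)$-stabilizer of $S_0$ is $\{\pm 1\}\times \O(3)\times \Omega\SO(3)$ (since $\gamma(0)=1$ for $\gamma\in\Omega\SO(3)$); this gives a sub-fibration of the one above.

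Second, I would show that $\mathcal{S}$ deformation retracts onto its subspace $\{\{t\}\times S^2\}\cong S^1$. Any essential $2$-sphere in $M$ is ambient-isotopic to some $\{t\}\times S^2$, and Morse-theoretic straightening of sphere-valued height functions on $M$ (in the spirit of Hatcher's parametrized analysis of $2$-spheres) produces such an isotopy canonically up to contractible choice. For the fiber, cut $M$ along $S_0$ to produce $S^2\times[0,1]$; a diffeomorphism of $M$ fixing $S_0$ setwise corresponds to a diffeomorphism of $S^2\times[0,1]$ that intertwines the two boundary parametrizations, possibly swapping them. A restriction fibration like Lemma~\ref{lem:restrictToSigma}, combined with Hatcher's theorem $\diff(S^2\times I\rel\partial)\simeq \Omega\SO(3)$ \cite{HatcherS1timesS2} and the Smale Conjecture $\diff(S^2)\simeq \SO(3)$, identifies $\diff(M,S_0)$ up to homotopy with $\SO(3)\times\Omega\SO(3)$, matching the identity component of the $R(M)$-stabilizer.

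Finally, I would compare the long exact sequences of homotopy groups for the two fibrations via the five-lemma to conclude that $R(M)\to\Diff(M)$ induces an isomorphism on every $\pi_q$; since both spaces are Fr\'echet manifolds with the homotopy type of CW-complexes, Whitehead's theorem upgrades this to a homotopy equivalence. The main obstacle is the fiber computation: one must show that the $\Omega\SO(3)$-factor in $R(M)$, which encodes the twisting of the $S^2$-fiber as one traverses the $S^1$-factor, is detected exactly by the $\Omega\SO(3)$ arising from $\diff(S^2\times I\rel\partial)$, and that the $\O(3)$-factor in $R(M)$ captures precisely the gluing parametrization between the two boundary components of $S^2\times I$. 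This sphere-surgery bookkeeping, together with the careful handling of the component-interchanging elements, is where the argument is most delicate and constitutes the bulk of Hatcher's detailed analysis.
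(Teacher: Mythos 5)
The paper offers no proof of this statement to compare against: it is quoted from Hatcher \cite{HatcherS1timesS2,HatcherS1timesS2new} and used as a black box. Your outline does faithfully reconstruct the architecture of Hatcher's argument --- the fibration $\Diff(M,S_0)\to\Diff(M)\to\mathcal{S}$ over the space of essential spheres (a special case of Corollary~\ref{coro:basic_fibrations}), the identification of the fiber by cutting along $S_0$ and invoking $\Diff(S^2)\simeq\O(3)$ \cite{Smale} together with $\Diff(S^2\times I\rel\partial)\simeq\Omega\SO(3)$ (a consequence of the Smale Conjecture \cite{HatcherSmale}), and the five-lemma comparison with the corresponding decomposition of $R(M)$.

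As a proof, however, the proposal has a genuine gap at exactly the point where all of the difficulty lives: the claim that $\mathcal{S}$ deformation retracts onto the circle of round spheres $\{\{t\}\times S^2\}$. That every single essential sphere is isotopic to a round one is the easy, unparametrized statement (Haken's lemma plus the light-bulb trick, as in Proposition~\ref{prop:uniqueS1timesS2splitting}); what your argument requires is the full parametrized statement that the inclusion $S^1\hookrightarrow\mathcal{S}$ is a weak homotopy equivalence, i.e.\ that arbitrary \emph{families} of essential spheres can be straightened coherently. Your justification --- ``Morse-theoretic straightening of sphere-valued height functions \dots\ produces such an isotopy canonically up to contractible choice'' --- is an assertion rather than an argument, and it is precisely this step that constitutes the substance of Hatcher's paper; the existence of a revised version \cite{HatcherS1timesS2new} reflects how delicate this parametrized straightening is. Until that retraction is actually established, the remainder of your outline (which is sound modulo minor bookkeeping: the identity component of the $R(M)$-stabilizer is $\SO(3)$ times the component of the constant loop in $\Omega\SO(3)$, and one must track $\pi_0$ through both fibrations, including the side-swapping and orientation-reversing components) does not yield the theorem.
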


Let $R(M,T)$ be the subgroup of $R(M)$ that takes $T$ to $T$, that is,
$R(M)\cap \Diff(M,T)$. Under the homeomorphism from $R(M)$ to $\O(2)\times
\O(3)\times \Omega\SO(3)$, $R(M,T)$ corresponds to the subgroup
$\O(2)\times (C_2\times \O(2))\times \Omega\SO(2)$. The $C_2$-factor of
$C_2\times \O(2)$ is generated by the reflection through the
equator~$D^2_+\cap D^2_-$.

\begin{proposition}\label{prop:easyHatcher} The inclusion
$R(M,T)\to \Diff(M,T)$ is a homotopy equivalence.
\end{proposition}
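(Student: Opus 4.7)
The plan is to deduce this from Hatcher's Theorem~\ref{thm:DiffS1timesS2} via the fibration machinery of Section~\ref{sec:main}. By Corollary~\ref{coro:basic_fibrations}, the orbit map $\Diff(M) \to \H(M,T)$ is a fibration with fiber $\Diff(M,T)$. Applying the same local cross-section construction to the subgroup $R(M)$ acting on its orbit $R(M)\cdot T$ yields a parallel fibration with fiber $R(M,T)$, and the inclusion $R(M) \hookrightarrow \Diff(M)$ gives a map of fibrations. Since Theorem~\ref{thm:DiffS1timesS2} makes the middle vertical map a homotopy equivalence, the five lemma applied to the ladder of long exact sequences of homotopy groups reduces the proposition to showing that the induced base map $R(M)\cdot T \to \H(M,T)$ is a homotopy equivalence.

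The orbit admits a concrete description. Under the homeomorphism $R(M) \cong \O(2)\times\O(3)\times\Omega\SO(3)$, an element $(f,g,\gamma)$ carries $T = S^1\times E$ to the torus $\{(s,\gamma(f^{-1}(s))(g(E))) : s\in S^1\}$, which meets each sphere fiber $\{s\}\times S^2$ in a single great circle. Thus $R(M)\cdot T$ is identified with the component of the constant loop in $L\RP^2$, where $\RP^2$ parametrizes great circles in $S^2$; the other component of $L\RP^2$ corresponds to Klein bottle fibrations (the monodromy of the circle fiber reverses orientation) and so does not contribute.

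The main obstacle is to exhibit a deformation retraction of $\H(M,T)$ onto $R(M)\cdot T$. My approach is a canonical geometric straightening of an arbitrary Heegaard torus $T'$: first isotope $T'$ into general position with respect to the sphere foliation $M \to S^1$ so that its intersection with each regular fiber is a finite union of circles; then, using that $T'$ separates $M$ into two solid tori and that $\{s\}\times S^2$ is the unique essential sphere in $M$ up to isotopy, apply an innermost-disk argument to cancel extraneous intersection circles in pairs until each regular fiber meets $T'$ in a single circle; finally apply geodesic straightening in each $\{s\}\times S^2$ to replace that single circle by the nearest great circle. The technical heart of the argument is making this procedure depend continuously on $T'$, accommodating the bifurcations that occur as the critical values of the $S^1$-projection move, and it is essentially a $T$-equivariant refinement of the deformation retraction Hatcher constructs in his proof of Theorem~\ref{thm:DiffS1timesS2}.
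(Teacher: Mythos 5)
Your reduction runs in the wrong direction and leaves the hard step unproved. After the map of fibrations and the five lemma, you have reduced the proposition to the claim that $R(M)\cdot T\to \H(M,T)$ is a homotopy equivalence; since you correctly identify $R(M)\cdot T$ with the component of constant loops in $L\RP^2$, i.e.\ with $LS^2/\langle\alpha\rangle$, that claim \emph{is} Theorem~\ref{thm:S1timesS2} --- the very theorem that the paper deduces \emph{from} Proposition~\ref{prop:easyHatcher}. So nothing has been gained unless you can prove the ``canonical straightening'' of an arbitrary Heegaard torus, continuously in families, directly. That is precisely the gap: a parametrized innermost-disk/general-position argument of the kind you sketch (cancelling intersection circles with the sphere fibers while ``accommodating the bifurcations'') is not a routine refinement; it is the same order of difficulty as Hatcher's proof that $R(M)\to\Diff(M)$ is a homotopy equivalence in the first place, and asserting it ``essentially follows'' from that proof does not constitute an argument. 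As written, the proposal proves nothing beyond what Theorem~\ref{thm:DiffS1timesS2} already gives.

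The paper avoids this entirely by fibering in the other direction: it restricts to $T$ rather than quotienting by $\Diff(M,T)$. Via Lemma~\ref{lem:restrictToSigma} one gets the ladder of fibrations
$\Diff(M\rel T)\to\Diff(M,T)\to\Diff_0(T)$ versus $R(M\rel T)\to R(M,T)\to R(T)$.
The fiber comparison is a homotopy equivalence because the components of $\Diff(M\rel T)$ are contractible --- Theorem~\ref{thm:hatcher}(i) applied to the two solid tori into which $T$ cuts $M$ --- and the base comparison $R(T)\to\Diff_0(T)$ is a homotopy equivalence by Gramain's theorem that $\SO(2)\times\SO(2)\to\diff(T)$ is one, together with a $\pi_0$ check. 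Both inputs are classical and finite-dimensional; no parametrized surgery is needed. If you want to salvage your approach, you would have to supply an independent proof of the deformation retraction of $\H(M,T)$ onto $R(M)\cdot T$, which the paper itself only obtains as a consequence of the proposition.
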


\begin{proof} 
By Lemma~\ref{lem:restrictToSigma}, the restriction map $\Diff(M,T)\to
\Diff(T)$ is a fibration over its image, which we will denote by
$\Diff_0(T)$. Letting $R(T)$ denote the diffeomorphisms of $T=S^1\times
(D^2_+\cap D^2_-)$ that send each $\{x\}\times (D^2_+\cap D^2_-)$ to some
$\{y\}\times (D^2_+\cap D^2_-)$ by an element of $\O(2)$, and such that
sending each $\{x\}$ to the corresponding $\{y\}$ is an element of $\O(2)$,
we have a restriction map $R(M,T)\to R(T)$ that is a $2$-fold covering
projection.

We now have a commutative diagram
\begin{equation*}
\begin{CD}
R(M \rel T) @>>>  R(M,T) @>>> R(T)\\
@VVV @VVV @VV{j}V\\
\Diff(M \rel T) @>>>  \Diff(M,T) @>>> \Diff_0(T)\\
\end{CD}
\end{equation*}%
whose rows are fibrations and vertical maps are inclusions. The two
components of $\Diff(M \rel T)$ are contractible, using
Theorem~\ref{thm:hatcher}, so the first vertical arrow is a homotopy
equivalence. To complete the proof, it suffices to check that the third
vertical arrow $j$ is a homotopy equivalence.

Note first that $R(T)$ is homeomorphic to $\O(2)\times \O(2)\times \Omega
\SO(2)$, compatibly with our homeomorphism from $R(M,T)$ to $\O(2)\times
(C_2\times \O(2))\times \Omega \SO(2)$. A diffeomorphism of $T$ lies in
$\Diff_0(T)$ exactly when it preserves the circles $\{t\}\times (D^2_+\cap
D^2_-)$ up to isotopy. These are exactly the diffeomorphisms isotopic to
elements of $R(T)$, so $j$ is surjective on path components. Since elements
in different path components of $R(T)$ induce distinct outer automorphisms
of $\pi_1(T)$, $j$ is injective on path components. The composition of
inclusions $\SO(2)\times \SO(2)\to r(T)\to \diff(T)$ is a well-known
homotopy equivalence (see for example A.~Gramain~\cite{Gramain}). The
components of $\Omega \SO(2)$ are contractible, so the inclusion
$\SO(2)\times \SO(2)\to r(T)$ is a homotopy equivalence as well. Therefore
$r(T)\to \diff(T)$ is a homotopy equivalence, and it follows that $j$ is a
homotopy equivalence on every path component of~$R(T)$.
\end{proof}

\begin{S1S2theorem} For the unique genus-$1$ Heegaard surface $\Sigma$
in $S^1\times S^2$, $\H(S^1\times S^2,\Sigma)$ is homotopy equivalent to
$LS^2/\langle \alpha \rangle$, where $\alpha$ is the involution induced by
the antipodal map of~$S^2$.
\end{S1S2theorem}

\begin{proof}
By Proposition~\ref{prop:uniqueS1timesS2splitting}, we may use $\Sigma=T$
as our genus-$1$ Heegaard surface.

We have a commutative diagram whose vertical arrows are inclusions:
\begin{equation*}
\begin{CD}
R(M,T) @>>>  R(M) @>>> R(M)/R(M,T)\\
@VVV @VVV @VVV\\
\Diff(M,T) @>>>  \Diff(M) @>>> \Diff(M)/\Diff(M,T)\\
\end{CD}
\end{equation*}%
By Corollary~\ref{coro:basic_fibrations}, the bottom row is a fibration. We
claim that the top row is also a fibration. Since $R(M)$ acts transitively
on $R(M)/R(M,T)$, it suffices to construct a local $R(M)$ cross-section at
the coset $1_M\,R(M,T)$.

We will write $X$ for $S^1\times \{\pm N\}$, a union of two circles in
$M$. Since $R(M,T)$ is exactly the subgroup of $R(M)$ that leaves $X$
invariant, the image $r(X)$ of $X$ under a coset $rR(M,T)$ is well-defined,
and $rR(M,T)=sR(M,T)$ if and only if $r(X)=s(X)$.

For $w\in S^2-\{-N\}$, let $\rho_w\in \SO(3)$ be the unique rotation with
axis the cross product $w\times N$ that rotates $w$ to $N$, and let
$\rho_N$ be the identity rotation. Now let $U$ be the open set in
$R(M)/R(M,T)$ consisting of the elements $rR(M,T)$ such that $r(X)\cap
T=\emptyset$. When $rR(M,T)\in U$, 
each component of $r(X)$ is contained in either the
interior of $S^1\times D^2_+$ or the interior of~$S^1\times D^2_-$, and
$r(X)$ meets each $\{t\}\times D^2_+$ in a single point.

To define $\chi\colon U\to R(M)$, let $rR(M,T)\in U$, $r=(f,g,\gamma)$. For
each $t\in S^1$, put $w_t=r(X)\cap (\{t\}\times D^2_+)$, $g_0=\rho_{w_0}$,
and $\delta(t)=\rho_{g_0(w_t)}$ (note that $g_0(w_t)\neq -N$, since this
would say that $w_t=\rho_{w_0}^{-1}(-N)=-\rho_{w_0}^{-1}(N)=-w_0\in
M-(S^1\times D^2_+)$). Since $\delta(0)=\rho_{g_0(w_0)}=\rho_N=1_{S^2}$,
$\delta\in \Omega\SO(3)$ and we can define $\chi(r)=(1,g_0,\delta)^{-1}$.
To verify that $\chi$ is a local cross-section, we have
$\chi(r)^{-1}(t,w_t)=(1,g_0,\delta)(t,w_t)=(t,\delta(t)(g_0(w_t)))=
(t,\rho_{g_0(w_t)}(g_0(w_t)))=(t,N)$, so $\chi(r)^{-1}r\in R(M,T)$. That
is, $\chi(r)(1_Mr(M,T))=rR(M,T)$, completing the proof of the claim.

By Theorem~\ref{thm:DiffS1timesS2} and Proposition~\ref{prop:easyHatcher},
the first and second vertical arrows of the diagram are homotopy
equivalences. Therefore the third is a (weak) homotopy equivalence.  To
complete the proof, we will construct a homeomorphism $\phi\colon
R(M)/R(M,T)\to LS^2/\langle\alpha\rangle$.

Define $\phi(rR(M,T))$ to be the element represented by the loop $\gamma$
defined $\gamma(t)=\proj_{S^2}r(t,N)$. Note that although
$\proj_{S^2}r(t,N)$ is not well-defined on cosets as an element of $LS^2$,
it is well-defined in $LS^2/\langle \alpha\rangle$, and clearly $\phi$ is
continuous. Injectivity of $\phi$ follows using the fact that $R(M,T)$ is
exactly the subgroup of $R(M)$ that preserves $S^1\times\{\pm N\}$.

For surjectivity, it suffices to show that if $\tau\colon S^1\to S^2$ is a
smooth loop, then there exists $r_\tau\in R(M)$ such that
$r_\tau(t,\tau(t))=(t,N)$, since then we have $\phi(r_\tau^{-1}
R(M,T))=\tau$. To show $r_\tau$ exists, we will apply a sequence of
elements of $R(M)$ whose composition moves each $(t,\tau(t))$ to~$(t,N)$.

First, there is an element $r=(1,g,1)\in R(M)$ such that
that $r(0,\tau(0))=(0,N)$, so we may assume that $\tau(0)=N$. Next,
there exist $0<\epsilon<1/2$ and an element of the form $r=(1,1,\gamma)$
such that $r(t,\tau(t))=(t,N)$ for $t\in [-\epsilon, \epsilon]\subset
S^1$; for $t\in [-\epsilon, \epsilon]$,
$r(t,x)=(t,\rho_{\tau(t)}(x))$, where $\rho_w$ is as
defined earlier in the proof where we were constructing a local $R(M)$
cross-section for $R(M)\to R(M)/R(M,T)$. So we may assume that
$\tau(t)=N$ for $t\in [-\epsilon,\epsilon]$.

Regard $\tau$ as a path $I\to S^1\to S^2=\SO(3)/\SO(2)$. By the homotopy
lifting property, $\tau$ lifts to a path $\delta \colon I\to \SO(3)$ with
$\delta(0)=1_{\SO(3)}$ and $\delta(t)(N)=\tau(t)$. In particular,
$\delta(t)(N)=N$ for $t\in [0,\epsilon]\cup [1-\epsilon, 1]$ so
$\delta(t)\in \SO(2)$ for these $t$. Changing $\delta(t)$ by a smooth
isotopy supported on $[0,\epsilon/2]\cup [1-\epsilon/2,1]$, we may assume
that $\delta(t)=1_{S^2}$ for $t\in [0,\epsilon/2]\cup [1-\epsilon/2,
1]$. Consequently, $\delta$ defines an element $\delta\colon S^1\to \SO(3)$
of the smooth loop space $\Omega \SO(3)$. Putting $r(t,x)=(t,
\delta(t)(x))$, we have $\phi(rR(M,T))(t) = \delta(t)(N)= \tau(t)$.
\end{proof}

\section{The irreducible case}
\label{sec:irreducible}

In this section, we will prove Corollaries~\ref{coro:inf_pi1}
and~\ref{coro:highdistance}. For the manifolds in
Corollary~\ref{coro:inf_pi1}, the center $Z(\pi_1(M))$ is $\Z^k$ where $k=3$
when $M$ is the $3$-torus and $k$ is $0$ or $1$ otherwise. Moreover,
$\diff(M)\simeq (S^1)^k$; for Haken manifolds this is
Theorem~\ref{thm:hatcher}(ii) above, and for hyperbolic $M$, $k=0$ and it
is D. Gabai's result~\cite{Gabai} that the components of $\Diff(M)$ are
contractible. When $M$ is non-Haken and not hyperbolic, it is
Seifert-fibered over a $2$-orbifold $O$ of nonpositive (orbifold) Euler
characteristic $\chi^{orb}(O)$. When $\chi^{orb}(O)<0$, that is, when $M$
has an $\widetilde{\text{SL}}(2,\mathbb{R})$ or $\mathbb{H}^2\times S^1$
geometric structure (see~\cite{Scott}), $\diff(M)\simeq (S^1)^k$
by~\cite{McCullough-Soma}. When $\chi^{orb}(O)=0$, $M$ may be Haken,
including all cases when $M$ has a Euclidean geometric structure, or it may
be a non-Haken infranilmanifold, excluded by hypothesis. In all the
non-excluded cases, the isomorphism $\pi_1(\diff(M))\to \Z^k$ is given
explicitly by taking the trace at a basepoint of $M$ of an isotopy from
$1_M$ to $1_M$ that represents a given element of~$\pi_1(\diff(M))$.

\begin{firstcorollary} Suppose that $M$ is irreducible and
$\pi_1(M)$ is infinite, and that $M$ is not a non-Haken infranilmanifold.
Then $\H_i(M,\Sigma)=0$ for $i\geq 2$, and there is an exact sequence
\[ 1\to Z(\pi_1(M)) \to \H_1(M,\Sigma) \to G(M, \Sigma)\to 1\ .\]\par
\end{firstcorollary}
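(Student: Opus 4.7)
The strategy is to reduce the statement to a direct application of Theorem~\ref{thm:genus2case} combined with the information about $\diff(M)$ recalled in the paragraph immediately preceding the Corollary. So the plan has essentially three ingredients to verify, in order.

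First I would confirm that every Heegaard splitting of $M$ has genus at least $2$, so that Theorem~\ref{thm:genus2case} is applicable. A genus-$0$ splitting forces $M=S^3$, and a genus-$1$ splitting forces $M$ to be a lens space or $S^1\times S^2$; in every such case $\pi_1(M)$ is finite or $M$ is reducible, contradicting the hypotheses. This is the one geometric input.

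Second, I would invoke the results summarized just before the corollary: under the standing hypotheses (irreducible $M$, infinite $\pi_1$, not a non-Haken infranilmanifold), there is a homotopy equivalence $\diff(M)\simeq (S^1)^k$, where $k$ is the rank of $Z(\pi_1(M))$, with the isomorphism $\pi_1(\diff(M))\to Z(\pi_1(M))\cong\Z^k$ given by the trace-at-a-basepoint map. Depending on the type of $M$ (Haken, hyperbolic, or Seifert-fibered with $\chi^{orb}<0$) this is Theorem~\ref{thm:hatcher}(ii), Gabai's theorem, or~\cite{McCullough-Soma} respectively; the non-Haken infranilmanifold case is exactly what the exclusion removes. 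Since $(S^1)^k$ is an Eilenberg--MacLane space $K(\Z^k,1)$, I read off that $\pi_q(\Diff(M))=\pi_q(\diff(M))=0$ for $q\geq 2$, and $\pi_1(\Diff(M))\cong Z(\pi_1(M))$.

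Finally, I would plug these computations into the conclusions of Theorem~\ref{thm:genus2case}. The isomorphism $\pi_q(\Diff(M))\to \H_q(M,\Sigma)$ for $q\geq 2$ yields $\H_q(M,\Sigma)=0$ in that range, and the first exact sequence of Theorem~\ref{thm:genus2case} becomes
\[ 1\to Z(\pi_1(M))\to \H_1(M,\Sigma)\to G(M,\Sigma)\to 1\ ,\]
which is exactly the asserted sequence. There is no genuine obstacle; the proof is a bookkeeping combination of Theorem~\ref{thm:genus2case} with the catalog of $\diff(M)$ homotopy types. The only point that requires care is making sure the excluded class (non-Haken infranilmanifolds) is precisely what is needed for the $\diff(M)\simeq (S^1)^k$ statement to hold, which the preceding paragraph already spells out.
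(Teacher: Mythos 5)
Your proposal is correct and follows essentially the same route as the paper: the paper's proof likewise notes that all splittings of such an $M$ have genus at least $2$, applies Theorem~\ref{thm:genus2case}, and uses the catalog $\diff(M)\simeq (S^1)^k$ with $\pi_1(\diff(M))\cong Z(\pi_1(M))$ assembled in the paragraph preceding the corollary (Hatcher--Ivanov, Gabai, and McCullough--Soma according to the geometric type, with the non-Haken infranilmanifolds excluded exactly because that identification is not yet known there). No gaps.
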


\begin{proof}
All Heegaard splittings of $M$ have genus at least $2$, so we can apply
Theorem~\ref{thm:genus2case}. For $i\geq 2$, $\H_i(M,\Sigma)\cong
\pi_i(\Diff(M))$, which is $0$ since $\diff(M)\simeq (S^1)^k$,
and there is an exact sequence
\[1\to \pi_1(\Diff(M))\to \H_1(M,\Sigma)\to G(M,\Sigma) \to 1\ .\]
\end{proof}

We remark that in general, the exact sequence in
Corollary~\ref{coro:inf_pi1} need not split. Suppose that $M$ fibers over
$S^1$ with fiber $F$ and monodromy a diffeomorphism $h\colon F\to F$ of
even order~$n$, having at least two fixed points $p$ and $q$. Let $D_p$ and
$D_q$ be disjoint $h$-invariant disks about $p$ and $q$
respectively. Regard $M$ as $F\times I/\mbox{$\sim$}$ where $(x,1)\sim
(h(x),0)$. The diffeomorphisms $\widetilde{\phi}_t\colon F\times \R\to
F\times \R$ defined by $\widetilde{\phi}_t(x,s)=(x,s+nt)$ induce
diffeomorphisms $\phi_t\colon M\to M$ that are an isotopy from $1_M$ to
$1_M$ with trace a primitive element of $Z(\pi_1(M))$. Now, let $V$ be
$\overline{F\times [0,1/2]-D_q\times [0,1/2]}\cup D_p\times [1/2,1]$ and
$W$ be $\overline{F\times [1/2,1]-D_p\times [1/2,1]}\cup D_q\times
[0,1/2]$. These form a Heegaard splitting of $M$ such that
$\phi_{r/n}(V)=V$ for each integer $r$ with $1\leq r\leq n$. The loop
sending $t$ to $\phi_{t/n}$ for $0\leq t\leq 1$ represents an element
$\gamma$ of $\H_1(M,\Sigma)$ such that $\gamma^n$ is a generator $\sigma$
of $Z(\pi_1(M))\cong \Z$. If the exact sequence splits, then
$\H_1(M,\Sigma)$ is a semidirect product $\Z\rtimes G(M,\Sigma)$. This maps
surjectively onto $\Z/2\times G(M,\Sigma)$, and $(\sigma,1)$ would be an
even power in this quotient, which is impossible.

\begin{secondcorollary}
If $d(M,\Sigma)>3$ then $\H(M,\Sigma)$ has finitely many components, each
of which is contractible. In fact, the number of components of
$\H(M,\Sigma)$ equals $|\Mod(M)|/|\Mod(M,\Sigma)|$, and if
$d(M,\Sigma)>2\genus(\Sigma)$, then $\H(M,\Sigma)$ is contractible.
\par
\end{secondcorollary}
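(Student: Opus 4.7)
The plan is to deduce this statement from Corollary~\ref{coro:inf_pi1} and Theorem~\ref{thm:genus2case} once the distance hypothesis has been used to pin down the topology of $M$ and to trivialize the Goeritz group. Specifically, $d(M,\Sigma)>3$ should force $M$ to be closed hyperbolic and $G(M,\Sigma)$ to be trivial; the sharper statement when $d(M,\Sigma)>2\genus(\Sigma)$ should reduce to a uniqueness-of-splittings result.

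First I would invoke Hempel's theorem that $d(M,\Sigma)\leq 2$ whenever $M$ is reducible, toroidal, or Seifert fibered; combined with Perelman's geometrization theorem, the hypothesis $d(M,\Sigma)>3$ forces $M$ to be closed hyperbolic. Then $\pi_1(M)$ is infinite and centerless and $M$ is not a non-Haken infranilmanifold, so Corollary~\ref{coro:inf_pi1} yields $\H_i(M,\Sigma)=0$ for $i\geq 2$ and $\H_1(M,\Sigma)\cong G(M,\Sigma)$, while Mostow rigidity gives that $\Mod(M)$ is finite. Next I would cite a high-distance triviality result for Goeritz groups (from work of Namazi or of the first author on mapping class groups of Heegaard splittings) to conclude that $d(M,\Sigma)>3$ forces $G(M,\Sigma)=1$. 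Consequently each component of $\H(M,\Sigma)$ is aspherical with trivial fundamental group, hence contractible, and the second exact sequence of Theorem~\ref{thm:genus2case} reduces to
\[ 1\to \Mod(M,\Sigma)\to \Mod(M)\to \H_0(M,\Sigma)\to 1\ ,\]
so $|\H_0(M,\Sigma)|=|\Mod(M)|/|\Mod(M,\Sigma)|$, which is finite by Mostow rigidity.

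For the sharper statement when $d(M,\Sigma)>2\genus(\Sigma)$, I would invoke the Scharlemann-Tomova theorem: any Heegaard surface $\Sigma'$ of $M$ satisfying $d(\Sigma)>2\genus(\Sigma')$ is either isotopic to $\Sigma$ or a stabilization of it. Every surface equivalent to $\Sigma$ has genus $\genus(\Sigma)$, so cannot be a stabilization of $\Sigma$ and must be isotopic to $\Sigma$; hence $|\H_0(M,\Sigma)|=1$, and combined with the contractibility of the single component, $\H(M,\Sigma)$ itself is contractible. I expect the main obstacle to be locating and correctly applying the triviality result for $G(M,\Sigma)$ under the hypothesis $d(M,\Sigma)>3$; the remaining ingredients are formal consequences of Corollary~\ref{coro:inf_pi1}, Theorem~\ref{thm:genus2case}, Mostow rigidity, Hempel's theorem, geometrization, and Scharlemann-Tomova.
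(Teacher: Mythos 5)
Your proposal is correct and follows essentially the same route as the paper: Hempel/Thompson to force $M$ hyperbolic, Corollary~\ref{coro:inf_pi1} to make each component a $K(G(M,\Sigma),1)$, the first author's result that $d(M,\Sigma)>3$ makes $\Mod(M,\Sigma)\to\Mod(M)$ injective (hence $G(M,\Sigma)$ trivial and the components contractible), and the second exact sequence of Theorem~\ref{thm:genus2case} for the component count. The only cosmetic differences are that the paper invokes Gabai's theorem (rather than Mostow rigidity alone, which a priori only bounds $\operatorname{Out}(\pi_1(M))$) for the finiteness of $\Mod(M)$, and for $d(M,\Sigma)>2\genus(\Sigma)$ it cites the surjectivity of $\Mod(M,\Sigma)\to\Mod(M)$ from the same paper of the first author rather than re-deriving it from Scharlemann--Tomova as you do --- both are valid.
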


\begin{proof}
All splittings of reducible $3$-manifolds have distance $d(M,\Sigma)=0$,
and by J.~Hempel~\cite{Hempel} and A. Thompson~\cite{Thompson}),
$d(M,\Sigma)>2$ implies that $M$ is atoroidal and not Seifert-fibered, so
$M$ is hyperbolic.  Corollary~\ref{coro:inf_pi1} shows that each component
of $\H(M,\Sigma)$ is a $K(G(M,\Sigma),1)$-space. By~\cite{Johnson},
$d(M,\Sigma)>3$ implies that $\Mod(M,\Sigma)\to \Mod(M)$ is injective, so
$G(M,\Sigma)$ is trivial.  Therefore the path components of $\H(M,\Sigma)$
are contractible.

D. Gabai~\cite{Gabai} showed that the inclusion of the finite set of
isometries into $\Diff(M)$ is a homotopy equivalence, so $\Mod(M,\Sigma)$
and hence $\H_0(M,\Sigma)$ are finite. In fact, the second exact sequence
of Theorem~\ref{thm:genus2case} also shows that the number of components of
$\H(M,\Sigma)$ equals $|\Mod(M)|/|\Mod(M,\Sigma)|$.  When
$d(M,\Sigma)>2\genus(\Sigma)$, the main result of \cite{Johnson} shows that
$\Mod(M,\Sigma)\to \Mod(M)$ is also surjective, so $\H(M,\Sigma)$ is
contractible.
\end{proof}

\section{The isometries of elliptic $3$-manifolds}
\label{sec:isometries}

An elliptic $3$-manifold is a closed $3$-manifold $E$ that admits a
Riemannian metric of constant positive curvature; according to Perelman's
celebrated work, this is equivalent to $\pi_1(E)$ being finite. We always
assume that $E$ is equipped with a metric of constant curvature $1$, so is
the quotient of $S^3$ by a finite group of isometries acting freely.

The elliptic $3$-manifolds were completely classified long ago
(see~\cite{McC} for a discussion). The isometry groups of elliptic
$3$-manifolds have also been known for a long time. A detailed calculation
was given in \cite{McC}. We will have to use some of the results and
methodology of that work, so in the remainder of this section we review the
necessary parts and set up some notation.

First we recall the beautiful description of $\SO(4)$ using quaternions. A
nice reference for this is~\cite{Scott}. Fix coordinates on $S^3$ as
$\{(z_0,z_1)\;|\;z_i\in\C, z_0\overline{z_0} + z_1\overline{z_1}=1\}$. Its
group structure as the unit quaternions can then be given by writing points
in the form $z=z_0+z_1j$, where $j^2=-1$ and $jz_i=\overline{z_i}j$. The
real part $\Re(z)$ is $\Re(z_0)$, and the imaginary part $\Im(z)$ is
$\Im(z_0)+z_1j$.  The inverse of $z$ is
$\Re(z)-\Im(z)=\overline{z_0}-z_1j$. The usual inner product on $S^3$ is
given by $z\cdot w=\Re(zw^{-1})$.

The unique involution in $S^3$ is $-1$, and it generates the center of
$S^3$. The pure imaginary unit quaternions $P$ form the $2$-sphere of
vectors orthogonal to~$1$, and are exactly the elements of order~$4$.
Consequently, $P$ is invariant under conjugation by elements of
$S^3$. Conjugation induces orthogonal transformations on $P$, defining a
canonical $2$-fold covering homomorphism $S^3\to\SO(3)$ with kernel the
center.

Left multiplication and right multiplication by elements of $S^3$ are
orthogonal transformations of $S^3$, and there is a homomorphism $F\colon
S^3\times S^3\to \SO(4)$ defined by $F(z,w)(q)=z q w^{-1}$. It is
surjective and has kernel $\{(1,1),(-1,-1)\}$.  The center of $\SO(4)$ has
order~2, and is generated by $F(1,-1)$, the antipodal map of~$S^3$.

By $S^1$ we will denote the subgroup of points in $S^3$ with $z_1=0$, that
is, all $z_0\in S^1\subset \C$. Let $\xi_k=\exp(2\pi i/k)$, which generates
a cyclic subgroup $C_k\subset S^1$. The elements $S^1\cup S^1j$ form a
subgroup $\Ostar\subset S^3$, which is exactly the normalizer of $S^1$ and
of the $C_k$ with $k>2$. It is also the preimage in $S^3$ of the orthogonal
group $\O(2)\subset \SO(3)$, under the $2$-fold covering $S^3\to \SO(3)$.

When $H_1$ and $H_2$ are groups, each containing $-1$ as a central
involution, the quotient $(H_1\times H_2)/\langle (-1,-1)\rangle$ is
denoted by $H_1\ttimes H_2$. In particular, $\SO(4)$ itself is $S^3\ttimes
S^3$, and contains the subgroups $S^1\ttimes S^3$, $\Ostar\ttimes\Ostar$,
and $S^1\ttimes S^1$. The latter is isomorphic to $S^1\times S^1$, but it
is sometimes useful to distinguish between them.  Finally, $\Dih(S^1\times
S^1)$ is the semidirect product $(S^1\times S^1)\rtimes C_2$, where $C_2$
acts by complex conjugation in both factors.

There are $2$-fold covering homomorphisms
\begin{equation*}
\Ostar\times \Ostar\to
\Ostar\ttimes\Ostar\to
\O(2)\times \O(2)\to
\O(2)\ttimes\O(2)\ .
\end{equation*}
Each of these groups is diffeomorphic to four disjoint copies of the torus,
but they are pairwise nonisomorphic, as can be seen by examining their
subsets of order~$2$ elements. Similarly, $S^1\times S^3$ and $S^1\ttimes
S^3$ are diffeomorphic, but nonisomorphic.

The method used in~\cite{McC} to calculate $\Isom(E)$ is
straightforward. Let $G=\pi_1(E)$ imbedded as a subgroup of $\SO(4)$ so
that $S^3/G=E$. An element $F(z,w)$ induces an isometry on $E$ exactly when
it lies in the normalizer $\Norm(G)$ of $G$ in $\O(4)$, and this gives an
isomorphism $\Norm(G)/G\cong \Isom(E)$. So for each $G$, one just needs to
calculate $\Norm(G)$ and work out the quotient group~$\Norm(G)/G$.

For convenient reference, we include two tables from~\cite{McC}.
Table~\ref{tab:nonlens} gives the isometry groups of the elliptic
$3$-manifolds with non-cyclic fundamental group. The first column shows the
fundamental group of $E$, where $C_m$ denotes a cyclic group of order $m$,
and $D^*_{4m}$, $T^*_{24}$, $O^*_{48}$, and $I^*_{120}$ are the binary
dihedral, tetrahedral, octahedral, and icosahedral groups of the indicated
orders. The groups called index $2$ and index $3$ diagonal are certain
subgroups of $D^*_{4m}\times C_n$ and $T^*_{24}\times C_n$
respectively. Table~\ref{tab:lens} gives the isometry groups of the
elliptics with cyclic fundamental group. These are the $3$-sphere $L(1,0)$,
real projective space $L(2,1)$, and the lens spaces $L(m,q)$ with $m\geq
3$. Both tables give the full isometry group $\Isom(E)$, and the group
$\mathcal{I}(E)$ of path components of $\Isom(E)$.
\begin{table}
\begin{small}
\renewcommand{\arraystretch}{1.5}
\setlength{\tabcolsep}{2 ex}
\setlength{\fboxsep}{0pt}
\fbox{%
\begin{tabular}{l|l|l|l}
$\pi_1(E)$&$E$&$\Isom(E)$&$\mathcal{I}(E)$\\
\hline
\hline
$Q_8=D^*_8$&quaternionic&$\SO(3)\times S_3$&$S_3$\\ 
\hline 
$Q_8\times C_n$&quaternionic&$\O(2)\times S_3$&$C_2\times S_3$\\ 
\hline 
$D_{4m}^*$, $m>2$&prism&$\SO(3)\times  C_2$&$C_2$\\ 
\hline 
$D_{4m}^*\times C_n$, $m>2$&prism&$\O(2)\times C_2$&$C_2\times C_2$\\ 
\hline 
index $2$ diagonal&prism&$\O(2)\times C_2$&$C_2\times C_2$\\ 
\hline
$T_{24}^*$&tetrahedral&$\SO(3)\times C_2$&$C_2$\\ 
\hline 
$T_{24}^*\times C_n$&tetrahedral&$\O(2)\times C_2$&$C_2\times C_2$\\ 
\hline 
index $3$ diagonal&tetrahedral&$\O(2)$&$C_2$\\ 
\hline 
$O_{48}^*$&octahedral&$\SO(3)$&$\{1\}$\\ 
\hline 
$O_{48}^*\times C_n$&octahedral&$\O(2)$&$C_2$\\ 
\hline 
$I_{120}^*$&icosahedral&$\SO(3)$&$\{1\}$\\ 
\hline 
$I_{120}^*\times C_n$&icosahedral&$\O(2)$&$C_2$\\
\end{tabular}}
\end{small}
\bigskip
\caption{The isometry group $\Isom(E)$ and its group of path components
$\mathcal{I}(E)$ for the elliptic $E$ with $\pi_1(E)$ not cyclic.}
\label{tab:nonlens}
\end{table}

\begin{table}
\begin{small}
\renewcommand{\arraystretch}{1.5}
\newlength{\minipagewidth}%
\setlength{\tabcolsep}{1.5 ex}
\setlength{\fboxsep}{0pt}
\fbox{%
\begin{tabular}{l|l|l}
$m$, $q$&$\Isom(L(m,q))$&$\mathcal{I}(L(m,q))$\\
\hline
\hline
$m=1$ ($L(1,0)=S^3$)&$\O(4)$&$C_2$\\
\hline
$m=2$ ($L(2,1)=\RP^3$)&$(\SO(3)\times \SO(3))\rtimes C_2$&$C_2$\\
\hline
\settowidth{\minipagewidth}{$m>2$, $m$ even}%
\begin{minipage}{\minipagewidth}%
\noindent $m>2$, $m$ odd,\par\end{minipage} $q=1$&$\Ostar\ttimes S^3$&$C_2$\\  
\hline
$m>2$, $m$ even, $q=1$&$\O(2)\times \SO(3)$&$C_2$\\
\hline
$m>2$, $1<q<m/2$, $q^2\not\equiv\pm1\bmod{m}$&$\Dih(S^1\times S^1)$&$C_2$\\
\hline
$m>2$, $1<q<m/2$, $q^2\equiv-1\bmod{m}$&$(S^1\ttimes S^1)\rtimes C_4$&$C_4$\\
\hline
$m>2$,
\settowidth{\minipagewidth}{$\gcd(m,q+1)\gcd(m,q-1)=m$}%
\begin{minipage}[t]{\minipagewidth}%
\noindent $1<q<m/2$, $q^2\equiv1\bmod{m}$,\par
\noindent $\gcd(m,q+1)\gcd(m,q-1)=m$\rule[-1.2 ex]{0mm}{0mm}\par%
\end{minipage}%
&$\O(2)\ttimes \O(2)$&$C_2\times C_2$\\
\hline
$m>2$,
\settowidth{\minipagewidth}{$\gcd(m,q+1)\gcd(m,q-1)=2m$}%
\begin{minipage}[t]{\minipagewidth}%
\noindent $1<q<m/2$, $q^2\equiv1\bmod{m}$,\par
\noindent $\gcd(m,q+1)\gcd(m,q-1)=2m$\rule[-1.2 ex]{0mm}{0mm}\par%
\end{minipage}%
&$\O(2)\times \O(2)$&$C_2\times C_2$
\end{tabular}}
\end{small}
\bigskip
\caption{Isometry groups of elliptic manifolds $L(m,q)$ with cyclic
fundamental group.}
\label{tab:lens}
\end{table}

In $S^3$ there is a standard torus $T=\{z_0+z_1j\;|\;|z_0|=|z_1|\}$. It
bounds two solid tori, $V$ and $W$, where $|z_0|\leq |z_1|$ and
$|z_0|\geq |z_1|$ respectively. In our work, certain isometries that
preserve $T$ will be useful.
\begin{enumerate}
\item $\alpha = F(1,-1)$, the antipodal map. It preserves each of $V$ and $W$.
\item $\rho\colon z_0+z_1j\mapsto \overline{z_0}+z_1j$. It is an
orientation-reversing involution that preserves each of $V$ and $W$.
\item $\tau=F(j,j)\colon z_0+z_1j\mapsto
\overline{z_0}+\overline{z_1}j$. It is an involution that
restricts to a hyperelliptic involution
on each of $V$ and $W$.
\item $\sigma_+=F(i,ij)\colon z_0+z_1j\mapsto z_1+z_0j$. It is an
involution that interchanges $V$ and $W$.
\item $\sigma_-\colon z_0+z_1j\mapsto z_1+\overline{z_0}j$. It is an
orientation-reversing isometry of order~$4$ that interchanges $V$ and $W$.
\end{enumerate}

The following relations among these isometries are easily checked:
\begin{enumerate}
\item $\sigma_-^2=\tau$.
\item $\sigma_+\tau=\tau\sigma_+$, and $\rho\tau=\tau\rho$.
\item $(\rho\sigma_+)^2=\tau$, so $\rho$ and $\sigma_+$ generate a
dihedral group of order~$8$.
\item $\sigma_+\sigma_-\sigma_+=\sigma_-^{-1}$, so $\sigma_+$ and 
$\sigma_-$ generate a dihedral group of order~$8$.
\end{enumerate}

\section{The Smale Conjecture}
\label{sec:smaleconj}

The original Smale Conjecture, proven by A. Hatcher \cite{HatcherSmale},
asserts that the inclusion $\Isom(S^3)\rightarrow \Diff(S^3)$ from the
isometry group to the diffeomorphism group is a homotopy equivalence. The
\emph{Generalized Smale Conjecture} (henceforth just called the Smale
Conjecture) asserts this for elliptic $3$-manifolds.

N. Ivanov~\cite{I5,I2} proved the Smale Conjecture for most of the
elliptic $3$-manifolds that contain one-sided Klein bottles, specifically:
\begin{enumerate}
\item[(i)] The lens spaces $L(4n,2n-1)$, $n\geq 2$
\item[(ii)] The quaternionic and prism manifolds for which $\pi_1(E)$ has a
nontrivial cyclic direct factor.
\end{enumerate}
The preprint~\cite{HKMR} gives proofs of the Smale Conjecture for all lens
spaces $L(m,q)$ with $m>2$, and for all quaternionic and prism manifolds.
Although the Smale Conjecture seems likely to hold for all elliptic
$3$-manifolds, no claim is currently asserted for the remaining
cases. Perelman's methods do not seem to apply, at least in their current
form (see~\cite[Section 1.4]{HKMR}).

\section{Heegaard splittings of elliptic $3$-manifolds: the genus-$0$ case}
\label{sec:genus0}

In this section we will prove Theorem~\ref{thm:S3} part (1), that is, that
$\H(S^3,S^2)\simeq \RP^3$.

Recall that $P\subset S^3$ is the $2$-sphere orthogonal to~$1$. The stabilizer
$\Isom_+(S^3,P)$ of $P$ in $\Isom_+(S^3)$ is exactly the stabilizer of the pair
$\{\pm1\}$, which is the subgroup $\O(3)\subset \SO(4)$. 

\begin{lemma}\label{lem:Pstabilizer}
The inclusion $\Isom(S^3,P)\to \Diff(S^3,P)$ is a homotopy equivalence.
\end{lemma}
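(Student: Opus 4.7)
The plan is to compare the restriction-to-$P$ fibrations for isometries and diffeomorphisms, reducing the claim to the Smale Conjectures for $S^3$ (Hatcher) and for $S^2$ (Smale). Consider the commutative diagram
\begin{equation*}
\begin{CD}
\Isom(S^3\rel P) @>>> \Isom(S^3,P) @>>> \Isom(P) \\
@VVV @VVV @VVV\\
\Diff(S^3\rel P) @>>> \Diff(S^3,P) @>>> \Diff(P)
\end{CD}
\end{equation*}
whose bottom row is a fibration over the image of restriction by Lemma~\ref{lem:restrictToSigma}, and whose top row is a fibration of Lie groups.

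First I would verify that both right-hand restriction maps surject onto $\Diff(P)$. Every isometry of $P$ is the restriction of an element of $\O(3)\subset \SO(4)$ acting trivially on the line orthogonal to the hyperplane containing $P$. For the diffeomorphism row, every diffeomorphism of $P$ extends across each of the two $3$-balls it bounds in $S^3$, because $\Diff(B^3)\to \Diff(\partial B^3)$ is surjective; this uses the Smale Conjecture for $S^3$ (equivalently, contractibility of $\Diff(B^3\rel\partial)$). Thus the two fibrations have the same base.

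Next I would identify the fibers. An isometry of $S^3$ fixing $P$ pointwise fixes the orthogonal axis setwise and must act as $\pm 1$ on it, so $\Isom(S^3\rel P)\cong C_2$. A diffeomorphism fixing $P$ pointwise either preserves each of the two complementary $3$-balls $B_+^3,B_-^3$, in which case it lies in $\Diff(B_+^3\rel\partial)\times \Diff(B_-^3\rel\partial)$ and is isotopic to $1_{S^3}$ rel $P$ by the Smale Conjecture, or it swaps the two balls, in which case it forms a torsor over the side-preserving component and is therefore also contractible. Hence $\Diff(S^3\rel P)$ has two contractible components, and the inclusion of the $C_2$ of isometries is a homotopy equivalence onto it. Smale's theorem for $S^2$ gives that $\Isom(P)=\O(3)\hookrightarrow \Diff(P)$ is also a homotopy equivalence.

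Finally, applying the five lemma to the long exact sequences of homotopy groups of the two fibrations forces the middle map $\Isom(S^3,P)\to \Diff(S^3,P)$ to be a weak homotopy equivalence. Since $\Isom(S^3,P)$ is a Lie group and $\Diff(S^3,P)$ has the homotopy type of a CW complex (as a Fr\'echet manifold, per the discussion in Section~\ref{sec:intro} and its references), this upgrades to an honest homotopy equivalence. The main obstacle is the identification of $\Diff(S^3\rel P)$ as homotopy equivalent to $C_2$, which is precisely what the Smale Conjecture for $S^3$ buys us.
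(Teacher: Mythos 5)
Your proposal is correct and follows essentially the same route as the paper: the same commutative diagram of restriction fibrations, with Smale's theorem for $S^2$ handling the base and the Smale Conjecture for $S^3$ (contractibility of $\Diff(D^3\rel\partial D^3)$) handling the fiber, concluded by the five lemma. The only cosmetic difference is that the paper first passes to the orientation-preserving subgroups (so the fibers are connected and the side-swapping component never arises) and then deduces the full statement, whereas you work with the full groups and treat the swap component of $\Diff(S^3\rel P)$ directly.
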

\begin{proof}
Consider the diagram
\begin{equation*}
\begin{CD}
\Isom_+(S^3\rel P) @>>>  \Isom_+(S^3,P) @>>> \Isom(P)\\
@VVV @VVV @VVV\\
\Diff_+(S^3\rel P) @>>>  \Diff_+(S^3,P) @>>> \Diff(P)\\
\end{CD}
\end{equation*}
in which the vertical maps are inclusions, and the rows are fibrations, the
top row since it is a homomorphism of compact Lie groups, and the bottom
row by Lemma~\ref{lem:restrictToSigma}. The right vertical arrow is a
homotopy equivalence, by a theorem of S. Smale~\cite{Smale}. The left
vertical arrow is a homotopy equivalence, since the Smale Conjecture for
$S^3$ implies that $\Diff(D^3\rel \partial D^3)$ is contractible. Therefore
$\Isom_+(S^3,P)\to \Diff_+(S^3,P)$ is a homotopy equivalence. Since both
$\Isom(S^3,P)$ and $\Diff(S^3,P)$ contain orientation-reversing elements,
it follows that $\Isom(S^3,P)\to \Diff(S^3,P)$ is a homotopy equivalence.
\end{proof}

\begin{genus0theorem} $\H(S^3,S^2)\simeq \RP^3$.
\end{genus0theorem}

\begin{proof}
Since all $2$-spheres (smoothly) imbedded in $S^3$ are isotopic, we may
take $\Sigma$ to be $P$. Consider the diagram
\begin{equation*}
\begin{CD}
\Isom(S^3, P) @>>>  \Isom(S^3) @>>> \Isom(S^3)/\Isom(S^3,P)\\
@VVV @VVV @VVV\\
\Diff(S^3, P) @>>>  \Diff(S^3) @>>> \Diff(S^3)/\Diff(S^3,P)\\
\end{CD}
\end{equation*}
in which the vertical maps are inclusions. The rows are fibrations, the top
row since it is a homomorphism of compact Lie groups, and the bottom row by
Corollary~\ref{coro:basic_fibrations}. We have just seen that the left
vertical arrow is a homotopy equivalence. The middle vertical arrow is
the original Smale Conjecture, so we have
\begin{gather*}
\H(S^3,P)\simeq \Isom(S^3)/\Isom(S^3,P)\\
\simeq \Isom_+(S^3)/\Isom_+(S^3,P)
=\SO(4)/\O(3) = \RP^3\ ,
\end{gather*}
the latter equality since $\O(3)$ is the stabilizer of $\{\pm 1\}$ under
the transitive action of $\SO(4)$ on pairs of antipodal points in $S^3$.
\end{proof}

\section{Lens spaces}
\label{sec:goeritz_lens}

Our work on genus-$1$ splittings will require some information about lens
spaces, which we recall in this section. Let $L$ be the lens space
$L(m,q)$, with $m\geq 2$ and $q$ selected so that $1\leq q\leq m/2$. We
regard $L$ as $S^3/G_L$, where $G_L\subset S^1\ttimes S^1\subset \SO(4)$ is
the cyclic subgroup of order $m$ generated by
$\gamma_{m,q}=F(\xi_{2m}^{q+1},\xi_{2m}^{q-1})$.

For each $n\geq 1$, $L$ has a Heegaard surface $\Sigma_n$ of genus~$n$, and
by a theorem of F. Bonahon~\cite{Bonahon} for $n=1$ and Bonahon and
J.-P.~Otal~\cite{Bonahon-Otal} for $n\geq 2$, it is the unique Heegaard
surface of this genus up to isotopy. Consequently, $\H(L,\Sigma_n)$ is
path-connected.

The standard torus $\{z_0+z_1j\;|\;|z_0|=|z_1|\}\subset S^3$ is invariant
under the action of $G_L$, and its image under $S^3\to S^3/G_L=L$ is a
Heegaard torus in $L$. We denote the image by~$T$, and the solid tori in
$L$ bounded by $T$ by $V$ and $W$. 

In~\cite{Bonahon}, F. Bonahon proved that every diffeomorphism of $L$
preserves $T$ up to isotopy, and used this to calculate the mapping class
groups of lens spaces. To state the results, we first recall the isometries
of $L$ used in~\cite{Bonahon}, which we define here using the isometries
$\tau$, $\sigma_+$ and $\sigma_-$ of $S^3$ that were introduced near the
end of Section~\ref{sec:isometries} above.
\begin{enumerate}
\item For all $(m,q)$, $\tau\gamma_{m,q}\tau=\gamma_{m,q}^{-1}$, so $\tau$
induces an orientation-preserving involution of $L$, also denoted by
$\tau$, that restricts to the hyperelliptic involution on each of $V$
and~$W$.
\item When $q^2=1\bmod m$, we have 
$\sigma_+\gamma_{m,q}\sigma_+=\gamma_{m,q}^q$, so $\sigma_+$ induces an
orientation-preserving involution of $L$, also denoted by $\sigma_+$,
that interchanges $V$ and~$W$.
\item When $q^2=-1\bmod m$, we have
$\sigma_-\gamma_{m,q}\sigma_-^{-1}=\gamma_{m,q}^{-q}$, so $\sigma_-$
induces an orientation-reversing isometry of $L$, also denoted by
$\sigma_-$, that interchanges $V$ and~$W$.
\end{enumerate}
\begin{theorem}[F.\ Bonahon]\label{thm:ModL}
The groups $\Mod(L)$ are as follows:
\begin{enumerate}
\item $\Mod(L(2,1))=C_2$ generated by $\sigma_-$.
\item If $m>2$ and $q= 1$, then $\Mod(L)=C_2$ generated
by $\tau$.
\item If $m>2$ and $q^2=1\bmod m$ but $q\neq 1$, then
$\Mod(L)=C_2\times C_2$ generated by $\tau$ and $\sigma_+$.
\item If $m>2$ and $q^2=-1\bmod m$, then $\Mod(L)=C_4$ generated by
$\sigma_-$.
\item If $m>2$ and $q^2\neq \pm 1\bmod m$, then $\Mod(L)=C_2$ generated by
$\tau$.
\end{enumerate}
\end{theorem}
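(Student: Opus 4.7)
The proof combines two inputs: (i) the uniqueness, up to isotopy, of the genus-one Heegaard torus $T\subset L$ (already cited from Bonahon--Otal in the paragraph preceding the theorem), which gives surjectivity of the natural map $\Mod(L,T)\to\Mod(L)$, and (ii) the isometry data collected in Section~\ref{sec:isometries}, notably the column $\mathcal{I}(L(m,q))$ of Table~\ref{tab:lens}. The plan is to compute $\Mod(L,T)$ by brute-force matrix enumeration, and then to collapse it to $\Mod(L)$ by identifying elements that are isotopic in $L$ though not through $T$-preserving isotopies.

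To compute $\Mod(L,T)$, I would use the restriction to $T$ to obtain an injective homomorphism $\Mod(L,T)\hookrightarrow\Mod(T)=GL(2,\Z)$; injectivity follows because a diffeomorphism of $L$ fixing $T$ pointwise is isotopic to the identity, using path-connectedness of $\Diff(V\rel\partial V)$ for a solid torus $V$. Its image consists of the $M\in GL(2,\Z)$ preserving the unordered pair $\{\pm\alpha,\pm\beta\}$, where $\alpha,\beta\in H_1(T)$ are the boundary classes of the meridian disks of $V$ and $W$. With $\alpha=(1,0)$ and $\beta=(q,m)$ in a basis of $H_1(T)$, a direct calculation shows this subgroup of $GL(2,\Z)$ is generated by $-I$ (corresponding to $\tau$) together with an additional ``swap'' element existing precisely when $q^2\equiv 1\pmod m$ (realized by $\sigma_+$) or when $q^2\equiv -1\pmod m$ (realized by $\sigma_-$, with $\sigma_-^2=\tau$).

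Assembling these matrices produces, for each $(m,q)$, a finite subgroup of $\Mod(L,T)$ of order $2$ or $4$. For the generic parameters in cases (3), (4), and (5) of the statement, these classes remain distinct in $\Mod(L)$; this I would verify by examining the induced outer automorphisms on $\pi_1(L)$, since $\tau$ acts as inversion while a swap exchanges (up to sign) the two generators coming from the $V$-- and $W$--sides. For the degenerate cases (1) and (2), additional isotopies identify some classes: for $L(m,1)$ with $m>2$, one of $\sigma_+$ or $\tau\sigma_+$ becomes isotopic to the identity, collapsing the nominal $C_2\times C_2$ upstairs to $C_2=\langle\tau\rangle$; and for $L(2,1)=\RP^3$, $\tau$ itself becomes isotopic to the identity, collapsing everything to a different $C_2=\langle\sigma_-\rangle$.

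The main technical obstacle is justifying the degenerate collapses. The cleanest approach is to use the isometry calculation of Section~\ref{sec:isometries}: the column $\mathcal{I}(L)$ of Table~\ref{tab:lens} agrees case-by-case with the claimed $\Mod(L)$, and the Smale Conjecture for lens spaces (discussed in Section~\ref{sec:smaleconj}) identifies $\pi_0\Isom(L)$ with $\Mod(L)$; one then matches the generator listed in each row of the table with $\tau$, $\sigma_+$, or $\sigma_-$ by direct inspection. Without invoking Smale, the degenerate isotopies in $L(m,1)$ and $\RP^3$ must instead be constructed by hand using the ambient $S^1$-symmetries of these Seifert-fibered manifolds, which is where the real work of the proof lies.
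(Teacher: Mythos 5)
The paper does not prove this statement: it is quoted from Bonahon's paper, and the only guidance given is the remark that Bonahon first shows every diffeomorphism of $L$ preserves $T$ up to isotopy and then deduces $\Mod(L)$. Measured against that, your skeleton --- surjectivity of $\Mod(L,T)\to\Mod(L)$ from uniqueness of the Heegaard torus, computation of $\Mod(L,T)$ via its faithful action on $H_1(T)$, then analysis of which classes die in $\Mod(L)$ --- is the right one, and your matrix enumeration is correct for $m>2$. (For $L(2,1)$ it undercounts: a side-preserving class acting by a reflection on $H_1(T)$ exists exactly when $m\mid 2q$, i.e.\ only for $(m,q)=(2,1)$, so $\Mod(\RP^3,T)$ is dihedral of order $8$ rather than of order $4$; this is harmless since every orientation-preserving class dies in $\Mod(\RP^3)$. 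Also, the injectivity of $\Mod(L,T)\to\Mod(T)$ rests on the connectedness of $\Diff(V\rel\partial V)$ for a solid torus, which is true but itself needs an argument --- e.g.\ the fibration over the contractible space of meridian disks rel boundary, with fiber $\Diff(D^3\rel\partial D^3)$.)

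The genuine gap is in your handling of the ``degenerate collapses,'' and your preferred repair does not work where it is most needed. The Smale Conjecture route is unavailable for case (1): the paper states explicitly that the conjecture is open for $\RP^3=L(2,1)$, so you cannot invoke it there, and invoking it elsewhere would render an unconditional 1983 theorem conditional precisely where the paper uses it as unconditional input (Corollary~\ref{coro:H0LT}, Lemma~\ref{lem:Tstabilizer}). Your declared fallback --- building the isotopies by hand --- is labeled ``the real work'' but not carried out. In fact neither the Smale Conjecture nor any hard construction is required: the collapsing isotopies are simply paths in the isometry group. For $L(m,1)$ with $m>2$, one factor of $\Norm_+(G_L)$ is all of $S^3$ (Lemma~\ref{lem:GLnormalizers}(ii)), and $\sigma_+=f(i,ij)$ lies in the identity component of $\Isom(L(m,1))$, hence is isotopic to $1_L$; for $\RP^3$, $\Isom_+(\RP^3)$ is connected, so $\tau$ and $\sigma_+$ are both isotopic to the identity. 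Conversely, that no further collapsing occurs follows from the two invariants you name but do not actually apply: the induced automorphism of $\pi_1(L)\cong C_m$ ($\tau$ acts by $-1$, a side-swap by $\pm q$) and orientation. Running these through the five cases --- e.g.\ checking that $1,-1,q,-q$ are pairwise distinct mod $m$ when $q^2\equiv 1$, $1<q<m/2$ --- is the routine verification that completes the proof; as written, your proposal stops short of it and leans on a conjecture that fails to cover case (1).
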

\noindent Note that Theorem~\ref{thm:ModL} implies the well-known fact that
$L(m,q)$ admits an orientation-reversing diffeomorphism if and only if
$q^2\equiv -1\bmod m$. Since each of the elements $\tau$, $\sigma_+$, and
$\sigma_-$ preserves $T$, Theorem~\ref{thm:ModL} also implies
\begin{corollary}\label{coro:H0LT}
$\Mod(L,T)\to \Mod(L)$ is surjective. 
\end{corollary}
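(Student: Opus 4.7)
The plan is simply to observe that the corollary is an immediate consequence of Theorem~\ref{thm:ModL} combined with the fact, recorded in Section~\ref{sec:isometries}, that each of the relevant isometries of $S^3$ preserves the standard Heegaard torus.

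First I would recall that in each of the five cases of Theorem~\ref{thm:ModL}, the group $\Mod(L)$ is generated by (a subset of) the three isometries $\tau$, $\sigma_+$, and $\sigma_-$ induced from isometries of $S^3$ via the quotient $S^3 \to S^3/G_L = L$. Next I would point out that, by their definitions in Section~\ref{sec:isometries}, each of $\tau$, $\sigma_+$, and $\sigma_-$ sends the standard torus $\{z_0+z_1j \mid |z_0|=|z_1|\}$ in $S^3$ to itself: $\tau$ and $\sigma_+$ are both induced by elements of $\Ostar \ttimes \Ostar$, which preserves the standard torus, while $\sigma_-$ was explicitly recorded to preserve it (interchanging $V$ and $W$).

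Since these isometries commute with the action of $G_L$ (this is how they were seen to descend to $L$ in the enumeration immediately preceding Theorem~\ref{thm:ModL}) and preserve the standard torus in $S^3$, they descend to diffeomorphisms of $L$ that preserve the Heegaard torus $T$. Thus each generator of $\Mod(L)$ lifts to an element of $\Mod(L,T)$, and the map $\Mod(L,T) \to \Mod(L)$ is surjective. There is no real obstacle here; the corollary is a bookkeeping consequence of results already in hand, which is why it is offered as a corollary rather than a theorem.
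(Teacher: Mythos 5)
Your proof is correct and is essentially the paper's own argument: the paper simply notes that each of $\tau$, $\sigma_+$, and $\sigma_-$ preserves $T$, so the generators of $\Mod(L)$ listed in Theorem~\ref{thm:ModL} all lift to $\Mod(L,T)$. One tiny wording quibble: these isometries \emph{normalize} $G_L$ (conjugating $\gamma_{m,q}$ to $\gamma_{m,q}^{-1}$, $\gamma_{m,q}^{q}$, or $\gamma_{m,q}^{-q}$) rather than commuting with it, but that is exactly the property you invoke and it does not affect the argument.
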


It is not difficult to compute $G(L,T)$, and then $\Mod(L,T)$ using the exact 
sequence $1\to G(L,T)\to \Mod(L,T)\to \Mod(L)\to 1$. Since the proofs are
not difficult and we will not need the results, we simply record them here:
\begin{proposition}\label{prop:lensGoeritz1} 
\begin{enumerate}
\item If $q\neq 1$, then $G(L,T)=\{1\}$.
\item If $m>2$ and $q=1$, then $G(L,T)=C_2$, generated by $\sigma_+$, and
$G_0(L,T)=\{1\}$. 
\item $G(L(2,1),T)=C_2\times C_2$, generated by $\sigma_+$ and $\tau$,
and $G_0(L(2,1),T)=C_2$, generated by $\tau$.
\end{enumerate}
\end{proposition}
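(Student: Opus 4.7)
The plan is to exploit the short exact sequence
\[
1 \to G(L,T) \to \Mod(L,T) \to \Mod(L) \to 1,
\]
whose surjectivity is Corollary~\ref{coro:H0LT} and whose kernel is $G(L,T)$ by definition. Since $\Mod(L)$ is explicitly known from Theorem~\ref{thm:ModL}, the task reduces to computing $\Mod(L,T)$ and then identifying which of its elements become trivial in $\Mod(L)$.

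First I would show that the restriction map $r\colon \Mod(L,T)\to \Mod(T)$ is injective. Given $f\in\Diff(L,T)$ with $f|_T$ isotopic to $1_T$, an isotopy through $\Diff(L,T)$ (extending a collar neighborhood isotopy of $T$) can be used to arrange $f|_T=1_T$. Then $f$ restricts to diffeomorphisms of the solid tori $V$ and $W$ that are the identity on their boundaries, and by Hatcher's theorem (Theorem~\ref{thm:hatcher}(i)) each is isotopic to the identity rel boundary, so $f$ is isotopic to $1_L$ preserving $T$. Next I would identify the image of $r$: a mapping class on $T$ extends over a solid torus iff it preserves the isotopy class of its meridian up to orientation, so the image is exactly the subgroup of $\Mod(T)=GL(2,\Z)$ preserving the unordered pair of unoriented curves $\{\mu,\mu'\}$, where $\mu$ and $\mu'$ are the meridians of $V$ and $W$. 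Using the representation $L=S^3/G_L$ with $G_L$ generated by $\gamma_{m,q}$, one writes $\mu,\mu'\in H_1(T)=\Z^2$ explicitly (they have intersection number $m$), and a direct enumeration shows that the isometries $\tau$, $\sigma_+$ (when defined) and $\sigma_-$ (when defined) introduced near the end of Section~\ref{sec:isometries} already generate $\Mod(L,T)$ in every case.

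The conclusions then follow by inspection against Theorem~\ref{thm:ModL}. If $q\neq 1$ and $q^2\not\equiv\pm 1\bmod m$, then $\Mod(L,T)=\langle\tau\rangle=C_2\xrightarrow{\cong}\Mod(L)$, so $G(L,T)=1$; similarly, when $q^2\equiv 1\bmod m$ with $q\neq 1$ the map $\langle\tau,\sigma_+\rangle\to\Mod(L)$ is an isomorphism onto $C_2\times C_2$, and when $q^2\equiv -1\bmod m$ the map $\langle\sigma_-\rangle\to\Mod(L)$ is an isomorphism onto $C_4$. When $m>2$ and $q=1$, $\Mod(L,T)=\langle\tau,\sigma_+\rangle$ but $\Mod(L)=\langle\tau\rangle=C_2$, so $\sigma_+$ generates $G(L,T)\cong C_2$. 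For $L(2,1)$, $\Mod(L,T)=\langle\tau,\sigma_+,\sigma_-\rangle$ while $\Mod(L)=\langle\sigma_-\rangle=C_2$, so both $\tau$ and $\sigma_+$ lie in the kernel, giving $G(L,T)=\langle\tau,\sigma_+\rangle\cong C_2\times C_2$.

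Finally, for the pure Goeritz group one reads off which generators of $G(L,T)$ preserve each side of $T$: $\sigma_+$ and $\sigma_-$ interchange $V$ and $W$, while $\tau$ preserves each side. This gives $G_0(L,T)=1$ in case~(2) and $G_0(L(2,1),T)=\langle\tau\rangle\cong C_2$ in case~(3). The main obstacle is the bookkeeping in step~(2), namely ruling out the presence of additional mapping classes in $\Mod(L,T)$ beyond the isometries, which requires comparing the explicit matrices preserving $\{\pm\mu,\pm\mu'\}$ with the action of $\tau$, $\sigma_\pm$ on $H_1(T)$ case by case.
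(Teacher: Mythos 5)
The paper itself offers no proof of this proposition (it explicitly says the proofs are "not difficult" and omits them), so I can only judge your argument on its own terms. Your architecture is the natural one and is essentially correct: the restriction $\Mod(L,T)\to\Mod(T)$ is injective, its image is the subgroup of $GL(2,\Z)$ preserving the unordered pair of unoriented meridian classes $\{\pm\mu,\pm\mu'\}$ with $\mu'=m\lambda+q\mu$, an integrality computation shows this subgroup is exactly what $\tau$, $\sigma_+$, $\sigma_-$ realize, and then one compares with Bonahon's $\Mod(L)$ using Corollary~\ref{coro:H0LT}. One small citation issue: Theorem~\ref{thm:hatcher}(i) only gives contractibility of the identity component $\diff(V\rel\partial V)$; for injectivity of $\Mod(L,T)\to\Mod(T)$ you also need $\Diff(V\rel\partial V)$ to be \emph{connected}, which requires the standard extra step (isotope a meridian disk back to itself rel boundary, then invoke connectedness of $\Diff(D^3\rel\partial D^3)$).

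The genuine gap is in the last stage, where you determine \emph{which} elements of $\Mod(L,T)$ die in $\Mod(L)$. Surjectivity plus the order count only tells you $|G(L,T)|$; it does not identify the kernel as a subset. In case (2), with $\Mod(L(m,1),T)=\{1,\tau,\sigma_+,\sigma_+\tau\}$ and $\Mod(L)=\langle[\tau]\rangle\cong C_2$, the kernel is an order-two subgroup not containing $\tau$, so it is either $\langle\sigma_+\rangle$ or $\langle\sigma_+\tau\rangle$; your write-up simply asserts the former. Since the statement to be proved names $\sigma_+$ as the generator, you must rule out $[\sigma_+]=[\tau]$ in $\Mod(L(m,1))$. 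This can be done, e.g., by observing that $\sigma_+=F(i,ij)$ acts trivially on $\pi_1(L(m,1))\cong C_m$ (conjugation by $F(i,ij)$ fixes $\gamma_{m,1}=F(\xi_m,1)$), whereas $\tau=F(j,j)$ acts by inversion, which is nontrivial for $m>2$; hence $[\sigma_+]\neq[\tau]$ and so $[\sigma_+]=1$. Equivalently, $\sigma_+$ lies in the identity component of $\Isom_+(L(m,1))$. A parallel (easier) point arises in case (3): to conclude $[\tau]=[\sigma_+]=1$ in $\Mod(L(2,1))$ you should note that these are orientation-preserving while the unique nontrivial class $[\sigma_-]$ is orientation-reversing (or that $[\tau]=[\sigma_-]^2=1$). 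With these identifications supplied, the rest of your argument, including the determination of $G_0$ from which generators exchange $V$ and $W$, goes through.
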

\noindent
\begin{proposition}\label{prop:lensGoeritz2}
\begin{enumerate}
\item If $q\neq 1$, then $\Mod(L,T)\to\Mod(L)$ is an isomorphism.
\item If $m>2$ and $q=1$, then $\Mod(L,T)=C_2\times C_2$, generated by
$\sigma_+$ and~$\tau$.
\item $\Mod(L(2,1),T)=D_8$, the dihedral group of order~$8$ generated by
$\sigma_+$ and $\sigma_-$.
\end{enumerate}
\end{proposition}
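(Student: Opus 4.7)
The plan is to read all three parts off from the exact sequence
\[1 \to G(L,T) \to \Mod(L,T) \to \Mod(L) \to 1,\]
where surjectivity on the right is Corollary~\ref{coro:H0LT}. The kernel is given by Proposition~\ref{prop:lensGoeritz1} and the quotient by Theorem~\ref{thm:ModL}, so once the orders are known, the only remaining task is to identify the extension as an abstract group. I would do this by lifting the generators of $\Mod(L)$ back to $\Mod(L,T)$ using the isometries $\tau$, $\sigma_+$, and $\sigma_-$ from Section~\ref{sec:isometries} and then applying the relations (1)--(4) listed at the end of that section.

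Part~(1) is immediate: for $q \neq 1$ the kernel is trivial, so the surjection is itself an isomorphism.

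For part~(2), with $m > 2$ and $q = 1$, the sequence reads
\[1 \to \langle \sigma_+ \rangle \to \Mod(L,T) \to \langle \tau \rangle \to 1\]
with both end groups cyclic of order~$2$. Since $\tau$ already lies in $\Mod(L,T)$, it provides a splitting of order~$2$, and because $\tau$ commutes with $\sigma_+$ by relation~(2) of Section~\ref{sec:isometries}, the extension is the direct product $C_2 \times C_2$ generated by these two elements. For part~(3), with $L = L(2,1)$, the sequence becomes
\[1 \to \langle \sigma_+, \tau \rangle \to \Mod(L,T) \to \langle \sigma_- \rangle \to 1,\]
where the kernel is $C_2 \times C_2$ and the quotient is $C_2$, so $\Mod(L,T)$ has order~$8$. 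Here relation~(1), $\sigma_-^2 = \tau$, is crucial: it shows that the lift $\sigma_-$ of the generator of $\Mod(L)$ has order~$4$ rather than~$2$ in $\Mod(L,T)$, so the sequence does not split as a direct product. Combined with relation~(4), $\sigma_+ \sigma_- \sigma_+ = \sigma_-^{-1}$, this shows that $\sigma_+$ and $\sigma_-$ generate a dihedral subgroup of order~$8$, which, having the full order, must equal $\Mod(L,T)$.

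The main obstacle lies in part~(3): one must recognize that the natural lift $\sigma_-$ of the generator of $\Mod(L(2,1))$ has order~$4$ rather than~$2$ in $\Mod(L,T)$, which prevents the sequence from splitting and rules out both $C_2^3$ and $C_2 \times C_4$ as possibilities. Relation~(1) from Section~\ref{sec:isometries} is exactly what is needed to pin down the extension as $D_8$.
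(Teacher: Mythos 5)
Your proof is correct and follows exactly the route the paper indicates: the paper omits the argument, remarking only that $\Mod(L,T)$ is computed from $G(L,T)$ via the exact sequence $1\to G(L,T)\to \Mod(L,T)\to \Mod(L)\to 1$, which is precisely what you do, using Proposition~\ref{prop:lensGoeritz1}, Theorem~\ref{thm:ModL}, Corollary~\ref{coro:H0LT}, and the relations $\sigma_-^2=\tau$, $\sigma_+\tau=\tau\sigma_+$, and $\sigma_+\sigma_-\sigma_+=\sigma_-^{-1}$ to identify the extensions. Your identification of $\sigma_-^2=\tau$ as the key point forcing $D_8$ rather than a split extension in part~(3) is exactly right.
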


\section{Heegaard splittings of elliptic $3$-manifolds: the genus-$1$ cases}
\label{sec:lens}

In this section we will prove Theorem~\ref{thm:S3}(2) and the (a)
statements in all three cases of Theorem~\ref{thm:lens_spaces}. We will
retain the notation of Section~\ref{sec:goeritz_lens}, so that $L$ is the
lens space $L(m,q)$ with $m\geq 2$ and $1\leq q\leq m/2$, except that we
now allow $L=L(1,0)$, the $3$-sphere. As in Section~\ref{sec:goeritz_lens},
$L$ is regarded as $S^3/G_L$, where $G_L\subset S^1\ttimes S^1\subset
\SO(4)$ is the subgroup generated by
$\gamma_{m,q}=F(\xi_{2m}^{q+1},\xi_{2m}^{q-1})$. In particular,
$\gamma_{1,0}=F(-1,-1)=1_{\SO(4)}$, and $\gamma_{2,1}=F(-1,1)=\alpha$, the
antipodal map.

Recall that $\Isom(L)=\Norm(G_L)/G_L$, where $\Norm(G_L)$ is the normalizer
of $G_L$ in $\O(4)$. Consequently, an element $F(z,w)$ in $\Norm(G_L)\cap
\SO(4)$ induces an isometry on $L$, which we denote by~$f(z,w)$.

We will need to know the groups $\Norm(G_L)\cap \SO(4)$, which we denote by
$\Norm_+(G_L)$. In the following lemma, $\Dih(S^1\ttimes S^1)$ denotes the
subgroup of index $2$ in $\Ostar\ttimes \Ostar$ generated by $S^1\ttimes
S^1$ and the involution $\tau=F(j,j)$, which acts by inversion on elements
of $S^1\ttimes S^1$. From~\cite{McC} we have the following information.
\begin{lemma}\label{lem:GLnormalizers}
\begin{enumerate}
\item[(i)] For $m\leq 2$, $\Norm_+(G_L)=\SO(4)$.
\item[(ii)] For $m>2$ and $q=1$, $\Norm_+(G_L) =S^3\ttimes \Ostar$.
\item[(iii)] For $m>2$, $q>1$ and $q^2\equiv 1\bmod m$,
$\Norm_+(G_L) =\Ostar\ttimes \Ostar$.
\item[(iv)] For $m>2$ and $q^2\not\equiv 1\bmod m$,
$\Norm_+(G_L) =\Dih(S^1\ttimes S^1)$.
\end{enumerate}
\end{lemma}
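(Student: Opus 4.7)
The plan is to compute the normalizer directly, reducing everything to conjugation computations in $S^3$. Writing $\SO(4)=S^3\ttimes S^3$ via $F$, with kernel $\{(1,1),(-1,-1)\}$, conjugation decomposes componentwise:
\[F(z,w)\,F(a,b)\,F(z,w)^{-1}=F(zaz^{-1},\,wbw^{-1}).\]
Therefore $F(z,w)\in\Norm_+(G_L)$ iff for every integer $n$ there exists $k=k(n)$ such that
\[(z\,\xi_{2m}^{n(q+1)}z^{-1},\;w\,\xi_{2m}^{n(q-1)}w^{-1})=\pm(\xi_{2m}^{k(q+1)},\,\xi_{2m}^{k(q-1)})\]
as an element of $(S^3\times S^3)/\{(1,1),(-1,-1)\}$. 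It suffices to check this for $n=1$.

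First I would dispose of (i): when $m\leq 2$, $\gamma_{m,q}$ is either $1_{\SO(4)}$ (for $m=1$) or the antipodal map $\alpha=F(-1,1)=F(1,-1)$ (for $m=2$), both central in $\SO(4)$, so $\Norm_+(G_L)=\SO(4)$. For $m>2$, I would use the classical fact that conjugation in $S^3$ preserves the real part, so the equation $z\zeta z^{-1}=\zeta'$ for $\zeta,\zeta'\in S^1\subset S^3$ forces $\Re(\zeta)=\Re(\zeta')$, i.e.\ $\zeta'=\zeta^{\pm 1}$. Moreover, for $\zeta\in S^1$ of order $>2$, the normalizer of $\langle\zeta\rangle$ in $S^3$ is exactly $\Ostar=S^1\cup S^1 j$, with $z\in S^1$ commuting with $\zeta$ and $z\in S^1 j$ conjugating $\zeta$ to $\zeta^{-1}$.

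Applying this componentwise yields the remaining cases. For (ii), where $q=1$, the right-hand component $\xi_{2m}^{q-1}=1$ places no constraint on $w$, while the left-hand component $\xi_{2m}^{q+1}=\xi_m$ has order $m>2$, forcing $z\in\Ostar$; hence $\Norm_+(G_L)\cong\Ostar\ttimes S^3$. For (iii) and (iv), both components are nontrivial elements of orders $>2$ (using $1<q<m/2$ and $m>2$), so $z,w\in\Ostar$ in every case, and the question is whether the two signs (\emph{i.e.} the choices of $\pm 1$ in the exponent) can be chosen independently. When $q^2\equiv 1\bmod m$, conjugation on the second factor by $j$ sends $\xi_{2m}^{q-1}$ to $\xi_{2m}^{-(q-1)}=\xi_{2m}^{q(q-1)}$, which equals $\gamma_{m,q}^{q-1}$ (a power of the generator) without needing any adjustment on the first factor; this independence produces the full $\Ostar\ttimes\Ostar$. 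When $q^2\not\equiv 1\bmod m$, inverting only one of the two exponents fails to land in $G_L$, so the signs are locked together: either both $z,w\in S^1$ or both $z,w\in S^1 j$, yielding exactly $\Dih(S^1\ttimes S^1)$.

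The main obstacle will be case (iv): one must rule out, for each $F(z,w)$ with $z\in S^1 j$ and $w\in S^1$ (or vice versa), the possibility of realizing conjugation by an exponent $k$ with $k(q+1)\equiv -(q+1)$ but $k(q-1)\equiv (q-1)\bmod 2m$, and show this system has a solution in $k$ iff $q^2\equiv 1\bmod m$. This is a routine but careful congruence computation using $\gcd(m,q\pm 1)$; the other cases are almost immediate once the componentwise picture is set up. Throughout, the $\pm$ ambiguity coming from $\ker F=\{(1,1),(-1,-1)\}$ must be tracked to avoid spurious elements, but it never introduces new normalizing cosets because $(-1,-1)\in G_L$ precisely when $m$ is even, and in all cases the relevant parity works out.
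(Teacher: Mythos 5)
Your proposal is correct in outline, but it takes a genuinely different route from the paper: the paper does not prove this lemma at all, declaring part (i) obvious and citing the relevant cases of the normalizer computations in McCullough's \emph{Isometries of elliptic $3$-manifolds} for parts (ii)--(iv). Your self-contained verification --- componentwise conjugation under $F$, the trace argument forcing $z\zeta z^{-1}=\zeta^{\pm1}$ for $\zeta\in S^1$, and the identification of $\Ostar$ as the normalizer of $C_k$ for $k>2$ --- is exactly the computation that reference carries out, so what you lose in brevity you gain in making the paper independent of an external case analysis. Two points deserve care in a full write-up. First, with the paper's convention $\gamma_{m,q}=F(\xi_{2m}^{q+1},\xi_{2m}^{q-1})$ and $q=1$, the nontrivial entry sits in the \emph{first} slot, so your conclusion $\Ostar\ttimes S^3$ is the transpose of the lemma's stated $S^3\ttimes\Ostar$; the paper is itself inconsistent here (Table~2 lists $\Ostar\ttimes S^3$ while Section~10 uses $S^3\ttimes\Ostar$), and since the two subgroups are carried to one another by quaternion inversion and only the quotient $S^3/\Ostar$ is used downstream, nothing breaks --- but you should flag the convention you are using.

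Second, your displayed identity $\xi_{2m}^{-(q-1)}=\xi_{2m}^{q(q-1)}$ requires $q^2-1\equiv 0\bmod 2m$, which does not follow from $q^2\equiv1\bmod m$ (e.g.\ $m=8$, $q=3$ gives $q^2-1=8\not\equiv0\bmod16$). The correct statement is that $F(1,j)\gamma_{m,q}F(1,j)^{-1}=\gamma_{m,q}^{\,q}$ holds in $\SO(4)$, because writing $q^2-1=tm$, \emph{both} components of $\gamma_{m,q}^{\,q}$ differ from those of the conjugate by the exponent shift $tm$, so the discrepancy is absorbed by the kernel element $(-1,-1)$ exactly when $t$ is odd. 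You acknowledge the sign-tracking issue in general terms, but as written this one identity is false; since it is the crux of case (iii), it should be stated modulo the kernel. For case (iv), the congruence you defer is indeed routine: from $(k+1)(q+1)\equiv(k-1)(q-1)\equiv 0$ (or both $\equiv m$) mod $2m$ one gets $k\equiv -q$ and $kq\equiv-1\bmod m$, forcing $q^2\equiv1\bmod m$, so the mixed cosets normalize only in case (iii). With these repairs the argument is complete.
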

\begin{proof}
Part (i) is obvious. Part (ii) is found in Case~III on p.~175
of~\cite{McC}, and
Part~(iii) is found in Case~VI on p.~176 of~\cite{McC}. Part~(iv) is found
in Cases~IV and~V on p.~175 of~\cite{McC}.
\end{proof}

As in Section~\ref{sec:goeritz_lens}, $T$ is the standard Heegaard torus
for $L$. In particular, for $L=L(1,0)$, $T$ is
$\{z_0+z_1j\;|\;|z_0|=|z_1|\}$. We will need to know the groups
$\Isom_+(L,T)$.
\begin{lemma}\label{lem:stabT}
\begin{enumerate}
\item $\isom(L,T)=(S^1\ttimes S^1)/G_L$
\item When $q^2\equiv 1\bmod m$,
$\Isom_+(L,T)=(\Ostar\ttimes \Ostar)/G_L$.
\item When $q^2\not \equiv 1\bmod m$,
$\Isom_+(L,T)=\Dih(S^1\ttimes S^1)/G_L$.
\end{enumerate}
\end{lemma}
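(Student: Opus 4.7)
The plan is to reduce the computation to identifying the stabilizer of the Clifford torus $\tilde T=\{z_0+z_1j\in S^3 : |z_0|=|z_1|\}$ in $\SO(4)$, and then to intersect with each of the four descriptions of $\Norm_+(G_L)$ provided by Lemma~\ref{lem:GLnormalizers}. Since $\tilde T$ is precisely the full preimage of $T$ under the covering $S^3\to L$, an element $F(z,w)\in\Norm_+(G_L)$ descends to an orientation-preserving isometry of $L$ preserving $T$ if and only if $F(z,w)(\tilde T)=\tilde T$. Noting that $G_L\subset S^1\ttimes S^1$ is automatically contained in the stabilizer of $\tilde T$, this identifies $\Isom_+(L,T)$ with $(\Norm_+(G_L)\cap \operatorname{Stab}_{\SO(4)}(\tilde T))/G_L$.

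The central computation is therefore $\operatorname{Stab}_{\SO(4)}(\tilde T)=\Ostar\ttimes\Ostar$. For the inclusion $\supseteq$, a direct quaternion calculation shows that for $a,b\in S^1$ the map $F(a,b)$ preserves each level set $\{|z_0|=c\}$, while $F(j,1)$ and $F(1,j)$ swap $|z_0|$ with $|z_1|$; together these generate $\Ostar\ttimes\Ostar$, so every such element preserves $\tilde T$. For the reverse inclusion, I would use that $S^1\ttimes S^1$ acts simply transitively on $\tilde T$ by translations of the form $(\theta,\phi)\mapsto(\theta+\alpha-\beta,\phi+\alpha+\beta)$, so that after composing with an element of $S^1\ttimes S^1$ any stabilizing element can be assumed to fix the base point $(1+j)/\sqrt 2$; a short enumeration of the pointwise stabilizer then places the remaining element in $\Ostar\ttimes\Ostar$.

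With this identification in hand, statements (2) and (3) follow by intersecting with the four cases of Lemma~\ref{lem:GLnormalizers}. In cases (i)--(iii) the normalizer $\Norm_+(G_L)$ contains $\Ostar\ttimes\Ostar$, and each of these cases satisfies $q^2\equiv 1\bmod m$, giving (2); in case (iv), $\Norm_+(G_L)=\Dih(S^1\ttimes S^1)$ is itself a subgroup of $\Ostar\ttimes\Ostar$, producing (3). Part~(1) is then immediate, since both $\Ostar\ttimes\Ostar$ and $\Dih(S^1\ttimes S^1)$ have identity component $S^1\ttimes S^1$, the subgroup $G_L$ is finite and contained in $S^1\ttimes S^1$, and passage to the quotient by a finite central subgroup preserves identity components. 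The main technical obstacle is the quaternion identification of the Clifford torus stabilizer; the rest is bookkeeping through the cases already tabulated in the preceding lemma.
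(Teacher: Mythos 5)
Your proof is correct and follows essentially the same route as the paper: both reduce the lemma to showing that the stabilizer of the Clifford torus $\tilde T$ in $\SO(4)$ is $\Ostar\ttimes\Ostar$, and then obtain $\Isom_+(L,T)$ by intersecting with the normalizers from Lemma~\ref{lem:GLnormalizers} and quotienting by $G_L$, with part~(1) read off from the identity components. The only divergence is in the reverse inclusion of that stabilizer computation, where the paper notes that any isometry preserving $\tilde T$ must preserve the pair of core geodesics $S^1\cup S^1j$ (the points at maximal distance from $\tilde T$) and deduces $z,w\in\Ostar$ directly, whereas you use simple transitivity of $S^1\ttimes S^1$ on $\tilde T$ plus a point-stabilizer enumeration; both arguments work at a comparable level of detail.
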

\begin{proof}
Suppose first that $L=L(1,0)=S^3$.  It is straightforward to check that
$\Ostar\ttimes \Ostar \subset \Isom_+(S^3,T)$. Suppose that $F(z,w)\in
\Isom_+(S^3,T)$. Now $T$ is exactly the set of points equidistant from the
two geodesics $S^1$ and $S^1j$, in fact these are exactly the most distant
points from it.  Since $F(z,w)$ preserves $T$, it must preserve $S^1\cup
S^1j$. A quick check shows that~$z,w\in \Ostar$ (starting with the case
when $F(z,w)(1),F(z,w)(i)\in S^1$ and $F(z,w)(j),F(z,w)(ij)\in S^1j$, we
compute that either $(z,w)\in S^1\times S^1$ or $(z,w)\in S^1j\times S^1j$,
while when $F(z,w)(S^1)=S^1j$, the previous case applies to $F(z,w)F(1,j)$
showing that $(z,w)\in S^1\times S^1j$ or $(z,w)\in S^1j\times S^1$).

In general, we have
\begin{gather*}
\Isom_+(L,T) = (\Isom_+(S^3,T)\cap \Norm(G_L))/G_L\\
=(\Ostar\ttimes \Ostar\cap \Norm(G_L))/G_L\ .
\end{gather*}
From Lemma~\ref{lem:GLnormalizers}, $\Ostar\ttimes \Ostar\cap \Norm(G_L)$ is
$\Ostar\ttimes \Ostar$ when $q^2\equiv 1\bmod m$ and is $\Dih(S^1\ttimes
S^1)$ when $q^2\not\equiv 1\bmod m$. This establishes statements~(2)
and~(3). The description of $\isom(L,T)$ in (1)~follows directly.
\end{proof}

\begin{lemma}\label{lem:Tstabilizer}
The inclusion $\Isom(L,T)\to \Diff(L,T)$ is a homotopy equivalence.
\end{lemma}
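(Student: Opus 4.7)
The plan mirrors the proof of Lemma~\ref{lem:Pstabilizer}. Writing $\Isom_0(T)$ and $\Diff_0(T)$ for the images of the restriction-to-$T$ maps, I would set up the commutative diagram
\begin{equation*}
\begin{CD}
\Isom(L\rel T) @>>> \Isom(L,T) @>>> \Isom_0(T) \\
@VVV @VVV @VVV \\
\Diff(L\rel T) @>>> \Diff(L,T) @>>> \Diff_0(T)
\end{CD}
\end{equation*}
whose top row is a fibration of compact Lie groups and whose bottom row is a fibration by Lemma~\ref{lem:restrictToSigma}. By comparison of long exact sequences it then suffices to show that the two outer vertical arrows are homotopy equivalences.

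For the left vertical, elements of $\Diff(L\rel T)$ preserve each side of $T$, so $\Diff(L\rel T) = \Diff(V\rel\partial V)\times \Diff(W\rel\partial W)$. Each factor is contractible: the identity component is contractible by Theorem~\ref{thm:hatcher}(i) applied to the Haken solid torus, and the Alexander trick along a meridian disk shows each factor is path-connected. On the isometric side, an element of $\O(4)$ that fixes the Clifford torus pointwise fixes a spanning set of $\R^4$ and must be the identity; hence $\Isom(L\rel T) = \{1\}$, and the left vertical is a map between contractible spaces.

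For the right vertical, Lemma~\ref{lem:stabT} identifies the identity component $\isom(L,T) = (S^1\ttimes S^1)/G_L$ as a $2$-torus that restricts onto the rotation subgroup of $\isom(T)$, and a bicollar extension argument shows that $\diff(T)\subset \Diff_0(T)$. Thus the identity components of $\Isom_0(T)$ and $\Diff_0(T)$ both coincide with $\diff(T)$, and on these identity components the right vertical is the inclusion of the rotation subgroup composed with Gramain's homotopy equivalence (already invoked in Proposition~\ref{prop:easyHatcher}). On $\pi_0$, I would verify case-by-case, comparing the explicit descriptions in Lemma~\ref{lem:stabT} and Tables~\ref{tab:nonlens}--\ref{tab:lens} on the isometric side with Proposition~\ref{prop:lensGoeritz2} on the smooth side, that the isometric generators $\tau,\sigma_+,\sigma_-$ introduced in Section~\ref{sec:isometries} realize every class of $\Mod(L,T)$; Bonahon's theorem that every diffeomorphism of $L$ preserves $T$ up to isotopy guarantees no extra components appear on the smooth side.

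The main obstacle will be precisely this $\pi_0$ book-keeping, since the several cases of $(m,q)$ in Proposition~\ref{prop:lensGoeritz2} must be examined one at a time. Once each class in $\Mod(L,T)$ has an isometric representative, the $5$-lemma yields the middle vertical as a weak equivalence, and this upgrades to an honest homotopy equivalence because both sides have the homotopy type of CW complexes.
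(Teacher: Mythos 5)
Your fibration-comparison skeleton is viable and closely parallels what the paper does for Lemma~\ref{lem:Pstabilizer} and Proposition~\ref{prop:easyHatcher}, though it is not the route the paper takes for this lemma. The pieces you assemble are essentially correct: $\Isom(L\rel T)=\{1\}$ since an element of $\O(4)$ fixing the Clifford torus pointwise fixes a spanning set of $\R^4$; $\Diff(L\rel T)\simeq\Diff(V\rel\partial V)\times\Diff(W\rel\partial W)$ is contractible (you should say a word about why a diffeomorphism fixing $T$ pointwise cannot swap $V$ and $W$: it fixes every curve on $T$, but the meridians of $V$ and $W$ are distinct isotopy classes); and Gramain handles the identity components. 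The paper instead first reduces to $\Isom_+(L,T)\to\Diff_+(L,T)$ by noting that orientation-reversing elements exist on either side exactly when $q^2\equiv-1\bmod m$, where $\sigma_-$ is available; then it proves bijectivity on $\pi_0$ directly; and it dispatches the higher homotopy by observing that $\isom(L,T)$ is a full rotation torus $S^1\times S^1$ inside $\diff(L,T)$.

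The genuine gap is precisely the step you flag as the ``main obstacle'': the $\pi_0$ comparison. You propose to read it off from Proposition~\ref{prop:lensGoeritz2}, but the paper records that proposition without proof and states explicitly that it will not be used; its proof is the very computation at issue, so citing it defers rather than supplies the content. Moreover, Propositions~\ref{prop:lensGoeritz1} and~\ref{prop:lensGoeritz2} are stated for $m\geq 2$, whereas the lemma must also cover $L(1,0)=S^3$. What you need to prove, and what the paper proves, is the following: (i) if $f\in\Diff_+(L,T)$ interchanges the sides of $T$, a homology computation on $H_1(T)$ using the two meridians forces $q^2\equiv 1\bmod m$, so $\sigma_+$ exists and reduces one to the side-preserving case; (ii) a side-preserving $f$ must preserve the meridian of each solid torus up to isotopy, hence is isotopic on both solid tori either to the identity or to the hyperelliptic involution, and in the latter case composing with $\tau$ places $f$ in $\diff(L,T)$ --- this gives surjectivity on $\pi_0$; (iii) for injectivity, among the component representatives $1,\tau,\sigma_+,\sigma_+\tau$ (and $\sigma_-$ when present) only the elements of $\isom(L,T)$ preserve the sides and are isotopic to the identity on $T$. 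Once this is supplied, your five-lemma argument closes correctly.
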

\begin{proof}
From Theorem~\ref{thm:ModL}, $L$ admits an orientation-reversing
diffeomorphism only when $q^2\equiv -1\bmod m$, in which case $\sigma_-$ is
an orientation-reversing element of $\Isom(L,T)$. That is, $\Diff(L,T)$
contains orientation-reversing elements if and only if $\Isom(L,T)$ does.
Therefore it suffices to prove that the inclusion $k\colon \Isom_+(L,T)\to
\Diff_+(L,T)$ is a homotopy equivalence.

We first check that $k$ is injective on path components. By
Lemma~\ref{lem:stabT}, $\Isom_+(L,T)$ has either two or four components,
represented by $1_L$, $\tau$, and when there are four, $\sigma_+=f(i,ij)$
and $\sigma_+\tau=f(ij,-i)$. Of these, only the elements of $\isom(L,T)$
preserve the sides and are isotopic to the identity on $T$, so
$\Isom_+(L,T)\cap \diff(L,T)=\isom(L,T)$.

To see that $k$ is surjective on path components, let $f\in
\Diff_+(L,T)$. If $f$ interchanges the sides of $T$, then a well-known
homology argument shows that $q^2\equiv 1\bmod m$. (Let $\mu$ and $\lambda$
in $H_1(T)$ be a meridian and longitude for $T\subset V$ such that
$m\lambda + q\mu$ is a meridian of $T\subset W$, and write $h=f|_T$. For
$h_*\colon H_1(T)\to H_1(T)$, $h_*(\mu)$ is a meridian for $T\subset W$, so
$h_*(\mu)=\epsilon(m\lambda + q\mu)$ for $\epsilon$ either $1$ or
$-1$. Writing $h_*(\lambda)=\epsilon(a\lambda+b\mu)$,
$\det(h_*)=aq-mb$. Since $h$ interchanges the sides of $T$,
$\pm\mu=mh_*(\lambda)+qh_*(\mu)$, implying that $a=-q$ and hence
$\det(h_*)=-q^2-mb$. When $f$ is orientation-preserving, $h$ must be
orientation-reversing, giving $q^2\equiv 1\bmod m$.) Since $q^2\equiv
1\bmod m$, $\sigma_+$ is an element of $\Isom_+(L,T)$, and composing it
with $f$, we may assume that $f$ preserves the sides of $T$. Since $f$ must
then preserve the meridian curves of both complementary tori up to isotopy,
it is isotopic either to the identity on both solid tori or to the
hyperelliptic involution on both. In the latter case, we may compose $f$
with $\tau$, an element of $\Isom_+(L,T)$, to assume that $f$ is isotopic
to the identity on both sides and therefore lies in $\diff(L,T)$. Therefore
every path component of $\Diff(L,T)$ contains elements of~$\Isom(L,T)$.

From Lemma~\ref{lem:stabT}, $\isom(S^3,T)$ is a full $S^1\times S^1$
subgroup of isometries in $\diff(L,T)$, so $k$ is a homotopy equivalence on
each path component.
\end{proof}
\longpage 

\begin{genus1theorem}\label{thm:genus1_cases}
Let $\Sigma_1$ be the standard genus-$1$ Heegaard surface
in $S^3$, $\RP^3$, or a lens space $L(m,q)$ with $1\leq q<m/2$. Assume, if
necessary, that the $3$-manifold satisfies the Smale Conjecture.
\begin{enumerate}
\item[(1)] For $L=S^3$ or $\RP^3$, $\H(L,\Sigma_1)\simeq \RP^2\times \RP^2$.
\item[(2)] For $m\geq 3$, $\H(L(m,1),\Sigma_1)\simeq \RP^2$.
\item[(3)] For $q\geq 2$, $\H(L(m,q),\Sigma_1)$ is contractible.
\end{enumerate}
\end{genus1theorem}

\begin{proof} Let $L$ be any of these manifolds. By a theorem of 
F. Waldhausen~\cite{Waldhausen} for $S^3$ and F. Bonahon~\cite{Bonahon} for
$\RP^3$ and $L(m,q)$, $T$ is the unique Heegaard torus of $L$ up to
isotopy. So we may take $\Sigma_1=T$.

Consider the commutative diagram
\begin{equation*}
\begin{CD}
\Isom(L,T) @>>>  \Isom(L) @>>> \Isom(L)/\Isom(L,T)\\
@VVV @VVV @VVV\\
\Diff(L,T) @>>> \Diff(L) @>>> \Diff(L)/\Diff(L,T)\ ,
\end{CD}
\end{equation*}
whose vertical maps are inclusions. The rows are fibrations, the
first since $\Isom(L,T)$ is a closed subgroup of the Lie group $\Isom(L)$,
and the second by Corollary~\ref{coro:basic_fibrations}.  Since $L$ is
assumed to satisfy the Smale Conjecture, the middle arrow is a homotopy
equivalence. By Lemma~\ref{lem:Tstabilizer}, the first vertical arrow is a
homotopy equivalence, so we have $\H(L,T)\simeq \Isom(L)/\Isom(L,T)$. Since
$L$ has an orientation-reversing isometry only when when $q^2\equiv -1\bmod
m$, in which case $\sigma_-\in \Isom(L,T)$ is an orientation-reversing
isometry, we have
\begin{gather*}
\Isom(L)/\Isom(L,T)=\Isom_+(L)/\Isom_+(L,T)\\
=(\Norm_+(G_L)/G_L)/(\Ostar\ttimes \Ostar\cap \Norm(G_L))/G_L\\
=\Norm_+(G_L)/(\Ostar\ttimes \Ostar\cap \Norm(G_L))\ .
\end{gather*}

Using Lemma~\ref{lem:GLnormalizers}, we can now calculate
$\H(L,T)$. For $L=S^3$ and $L=\RP^3$,
\begin{gather*}\H(L,T) \simeq \SO(4)/(\Ostar\ttimes \Ostar)
=(S^3\ttimes S^3)/(\Ostar\ttimes \Ostar)\\
=(S^3\times S^3)/(\Ostar\times \Ostar)
=S^3/\Ostar \times S^3/\Ostar
= \RP^2\times \RP^2\ .
\end{gather*}
For $L=L(m,1)$, 
\begin{gather*}\H(L,T) \simeq (S^3\ttimes \Ostar)/(\Ostar\ttimes \Ostar)\\
=(S^3\times \Ostar)/(\Ostar\times \Ostar)
=S^3/\Ostar =\RP^2\ .
\end{gather*}
For $L=L(m,q)$, $q>1$, 
\[\H(L,T) \simeq (\Ostar\ttimes \Ostar)/(\Ostar\ttimes \Ostar)\ ,\]
a single point.
\end{proof}

\section{Heegaard splittings of elliptic $3$-manifolds: genus
$2$ and higher}
\label{sec:elliptics}

We continue to use the notation of Section~\ref{sec:lens}.
\begin{genus2theorem}\label{thm:high-genus_cases}
For $n\geq 2$, let $\Sigma_n$ be the standard genus-$n$ Heegaard surface in
one of $S^3$, $\RP^3$, or a lens space $L(m,q)$ with $1\leq q<m/2$. Assume,
if necessary, that the $3$-manifold satisfies the Smale Conjecture.
\begin{enumerate}
\item[(1)] $\H_i(S^3,\Sigma_n)\cong \pi_i(S^3\times S^3)$ for
$i\geq 2$, and there is a non-split exact sequence
\[ 1\to C_2 \to H_1(S^3,\Sigma_n) \to G(S^3,\Sigma_n)\to 1\ .\]
\item[(2)] $\H_i(\RP^3,\Sigma_n)\cong \pi_i(S^3\times S^3)$ 
for $i\geq 2$, and there is an exact sequence
\[ 1\to C_2\times C_2\to \H_1(\RP^3,\Sigma_n) \to G(\RP^3,\Sigma_n)\to 1\ .\]
\item[(3)] For $m\geq 3$, $\H_i(L(m,1),\Sigma_n)\cong \pi_i(S^3)$ for
$i\geq 2$, and there are exact sequences
\[ 1\to \Z\to \H_1(L(m,1),\Sigma_n) \to G(L,\Sigma_n)\to 1\]
for $m$ odd, and
\[ 1\to \Z\times C_2\to \H_1(L(m,1),\Sigma_n) \to G(L,\Sigma_n)\to 1\]
for $m$ even.
\item[(4)] For $q\geq 2$, $\H_i(L(m,q),\Sigma_n)=0$ for $i\geq 2$, 
and there is an exact sequence
\[ 1\to \Z\times \Z\to \H_1(L,\Sigma_n) \to G(L,\Sigma_n)\to 1\ .\]
\end{enumerate}
\end{genus2theorem}

\begin{proof} Let $L$ denote any of these manifolds. By results of
Waldhausen~\cite{Waldhausen} for $S^3$ and Bonahon-Otal~\cite{Bonahon-Otal}
for $\RP^3$ and $L(m,q)$, $1\leq q<m/2$, $L$ has a unique Heegaard surface
$\Sigma_n$ for each genus greater than $1$, so the corresponding spaces of
Heegaard splittings are path-connected.

By Theorem~\ref{thm:genus2case}, $\H_i(L,\Sigma_n)\cong
\pi_i(\Diff(L))$, and there is an exact sequence
\[1\to \pi_1(\Diff(L))\to \H_1(L,\Sigma_n)
\to G(L,\Sigma_n) \to 1\ .\] 
Since we are assuing that $L$ satisfies the Smale Conjecture, the groups
$\pi_i(\Diff(L))\cong \pi_i(\Isom(L))$ for $i\geq 1$ can be found using
Table~\ref{tab:lens}.\par\smallskip

For $L=S^3$, we have $\isom(S^3)=\SO(4)$, so
$\H_i(S^3,\Sigma_n)=\pi_i(\SO(4))\cong \pi_i(S^3\times S^3)$ for $i\geq 2$
and $\pi_1(\Diff(S^3))$ is $C_2$. To see that the exact sequence in
part~(a) does not split, observe that there is an isotopy $J_t$ with
$J_0=1_{S^3}$ and $J_1$ a hyperelliptic involution on $\Sigma_n$; $J_t$
rotates through an angle $\pi$ around an axis of symmetry of
$\Sigma_n$. This defines an element of $\H_1(S^3,\Sigma_n)$ whose square is
the generator of $\pi_1(\Diff(S^3))$. A normal $C_2$-subgroup is central,
so if the exact sequence split we would have
$\H_1(S^3,\Sigma_n)=\pi_1(\Diff(S^3))\times G(S^3,\Sigma_n)$, and the
generator of $\pi_1(\Diff(S^3))$ could not be a square.

For  $L=\RP^3$,
$\isom(\RP^3)$ is homeomorphic to $\SO(3)\times \SO(3)$,
so $\H_i(\RP^3,\Sigma_n)=\pi_i(S^3\times S^3)$ for $i\geq
2$ and $\pi_1(\Diff(\RP^3))$ is $C_2\times C_2$. 

For $m\geq 3$, $\isom(L(m,1))$ is homeomorphic to $S^1\times S^3$
or to $S^1\times \SO(3)$ according as $m$ is odd or even. So
$\H_i(L(m,1),\Sigma_n)=\pi_i(S^3)$ for $i\geq
2$, while $\pi_1(\Diff(L(m,1)))$ is $\Z$ or $\Z\times C_2$ according as $m$
is odd or even.

For $q\geq 2$, $\isom(L(m,q))$ is homeomorphic to $S^1\times S^1$, so
$\H_i(L(m,q),\Sigma_n)=0$ for $i\geq 2$ and $\pi_1(\Diff(L(m,q)))$ is $\Z\times
\Z$.
\end{proof}

\begin{threefibertheorem} Let $E$ be an elliptic $3$-manifold, but not $S^3$ or a lens
space. Assume, if necessary, that $E$ satisfies the Smale Conjecture.  Let
$\Sigma$ be a Heegaard surface in $E$.
\begin{enumerate}
\item If $\pi_1(E)\cong D^*_{4m}$, or if $E$ is one of the three manifolds
with fundamental group either $T^*_{24}$, $O^*_{48}$, or $I^*_{120}$, then
$\H_i(E,\Sigma)\cong\pi_i(S^3)$ for $i\geq 2$ and there is an exact
sequence
\[ 1\to C_2\to \H_1(E,\Sigma) \to G(E,\Sigma)\to 1\ .\]
\item If $E$ is not one of the manifolds in Case~(1), that
is, either $\pi_1(E)$ has a nontrivial cyclic direct factor, or $\pi_1(E)$
is a diagonal subgroup of index~$2$ in $D^*_{4m}\times C_n$ or of index~$3$
in $T^*_{48}\times C_n$, then $\H_i(E,\Sigma)=0$ for $i\geq 2$, and
there is an exact sequence
\[ 1\to \Z\to \H_1(E,\Sigma) \to G(E,\Sigma)\to 1\ .\]
\end{enumerate}
\end{threefibertheorem}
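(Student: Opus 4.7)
The plan is to reduce the computation of $\H_i(E,\Sigma)$ to that of $\pi_i(\Isom(E))$ by combining Theorem~\ref{thm:genus2case} with the Smale Conjecture, and then to read off the answer from Table~\ref{tab:nonlens}. First I would observe that every Heegaard surface in $E$ has genus at least~$2$: a genus-$0$ splitting forces $E=S^3$, and a genus-$1$ splitting forces $E$ to be $S^3$, a lens space, or $S^1\times S^2$, none of which is elliptic with non-cyclic fundamental group (and $E$ is explicitly excluded from being $S^3$ or a lens space). Theorem~\ref{thm:genus2case} therefore gives $\pi_q(\Diff(E))\cong \H_q(E,\Sigma)$ for $q\geq 2$ together with the exact sequence
\[ 1\to \pi_1(\Diff(E))\to \H_1(E,\Sigma)\to G(E,\Sigma)\to 1\,. \]
The (assumed) Smale Conjecture for $E$ identifies $\pi_i(\Diff(E))$ with $\pi_i(\Isom(E))$ for all $i\geq 1$, and only the identity component $\isom(E)$ contributes to these groups, so the problem reduces to identifying $\isom(E)$ in each case.

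The computation then splits along the dichotomy in the statement. For Case~(1), in each row of Table~\ref{tab:nonlens} corresponding to $\pi_1(E)\in\{D^*_{4m},T^*_{24},O^*_{48},I^*_{120}\}$ (including $Q_8=D^*_8$) the isometry group has identity component $\SO(3)$; using $\pi_i(\SO(3))\cong \pi_i(S^3)$ for $i\geq 2$ and $\pi_1(\SO(3))\cong C_2$ yields the stated conclusion. For Case~(2), the remaining non-cyclic rows of Table~\ref{tab:nonlens}---namely those with a nontrivial cyclic direct factor, or the index-$2$ and index-$3$ diagonal subgroups---have isometry groups with identity component $\SO(2)\simeq S^1$, so that $\pi_i(\Isom(E))=0$ for $i\geq 2$ and $\pi_1(\Isom(E))\cong \Z$, exactly as claimed.

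I do not anticipate a real obstacle here: the substantive work was already carried out in Theorem~\ref{thm:genus2case} and in the classification of isometry groups recorded in Table~\ref{tab:nonlens}. The one piece of care required is bookkeeping, to confirm that the group-theoretic dichotomy in the statement matches row-by-row the split of Table~\ref{tab:nonlens} into isometry groups with identity component $\SO(3)$ versus $\SO(2)$. Geometrically, the extra $S^1$-factor in the isometry group (and the shift from Case~(1) to Case~(2)) arises precisely when $E$ admits a free $S^1$-action, which is what the cyclic direct factor or diagonal subgroup structure of $\pi_1(E)$ encodes.
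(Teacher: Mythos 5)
Your proposal is correct and follows essentially the same route as the paper: reduce to $\pi_i(\Diff(E))$ via Theorem~\ref{thm:genus2case} (after noting the genus is at least $2$), invoke the Smale Conjecture to pass to $\Isom(E)$, and read off the identity component ($\SO(3)$ versus $S^1$) from Table~\ref{tab:nonlens} in the two cases. The only differences are cosmetic --- you spell out why genus $0$ and $1$ are excluded and note explicitly that $Q_8=D^*_8$ falls under Case~(1), which the paper leaves implicit.
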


\begin{proof}
Fix a Heegaard surface
$\Sigma$ in the elliptic $3$-manifold $E$. Since
$E$ is not $S^3$ or a lens space, $\Sigma$ has genus at least $2$.
By Theorem~\ref{thm:genus2case}, $\H_i(E,\Sigma)\cong \pi_i(\Diff(E))$ for
$i\geq 2$, and there is an exact sequence
\[1\to \pi_1(\Diff(E))\to \H_1(E,\Sigma)
\to G(E,\Sigma) \to 1\ .\] 
Since $E$ is assumed to satisfy the Smale Conjecture,
$\pi_i(\Diff(M))\cong \pi_i(\Isom(E))$.\par\smallskip

\noindent\textsl{Case I:} $\pi_1(E)\cong D^*_{4m}$, or $E$ is one of the three
manifolds with fundamental group either $T^*_{24}$, $O^*_{48}$, or
$I^*_{120}$.
\smallskip

Referring to Table~\ref{tab:nonlens}, we see that $\isom(E)$ is
homeomorphic to $\SO(3)$, so $\H_i(E,\Sigma)=\pi_i(S^3)$ for $i\geq 2$ and
$\pi_1(\isom(E))\cong C_2$.
\smallskip

\noindent\textsl{Case II:} $E$ is not one of the manifolds in Case~I, that
is, either $\pi_1(E)$ has a nontrivial cyclic direct factor, or $\pi_1(E)$
is a diagonal subgroup of index~$2$ in $D^*_{4m}\times C_n$ or of index~$3$
in $T^*_{48}\times C_n$.
\smallskip

Again from Table~\ref{tab:nonlens}, $\isom(E)$ is homeomorphic to $S^1$, so
$\H_i(E,\Sigma)=0$ for $i\geq 2$ and $\pi_1(\isom(E))$ is $\Z$.
\end{proof}

For the manifolds in Theorem~\ref{thm:elliptics}, M. Boileau and
J.-P.~Otal~\cite{Boileau-Otal} have proven that there is a unique genus-$2$
Heegaard splitting up to isotopy, so in that case $\H(L,\Sigma)$ is known
to be connected.

\bibliographystyle{amsplain}

\begin{thebibliography}{10}

\bibitem{Boileau-Otal} M. Boileau and J.-P.\ Otal, Scindements de Heegaard
et groupe des homéotopies des petites vari\'et\'es de Seifert,
\textit{Invent. Math.} 106 (1991), 85--107.

\bibitem{Bonahon} F. Bonahon, Diff\'eotopies des espaces lenticulaires,
\textit{Topology} 22 (1983), 305-314.

\bibitem{Bonahon-Otal} F. Bonahon and J.-P.~Otal,
Scindements de Heegaard des espaces lenticulaires,
\textit{Ann. Sci. \'Ecole Norm. Sup.} (4) 16 (1983), 451--466.

\bibitem{Casson-Gordon} A. Casson and C. McA. Gordon, Reducing Heegaard
splittings, \textit{Topology Appl.} 27 (1987), no. 3, 275–-283.

\bibitem{Cerf} J. Cerf, Topologie de certains espaces de plongements,
{\em Bull. Soc. Math. Fr.} 89 (1961), 227-380.

\bibitem{Cohen-Jones-Yan} R. Cohen, J. Jones, and J. Yan, The loop homology
algebra of spheres and projective spaces, in \textit{Progress In
Mathematics} 215, Birkh\"auser, Basel (2003), 77-92.

\bibitem{Earle-Eells} C. Earle and J. Eells, The diffeomorphism group of a
compact Riemann surface, \textit{Bull. Amer. Math. Soc.} 73 (1967),
557--559

\bibitem{Gabai} D. Gabai, The Smale conjecture for hyperbolic 3-manifolds:
${\rm Isom}(M\sp 3)\simeq{\rm Diff}(M\sp 3)$, \textit{J. Differential
Geom.} 58 (2001), 113--149.

\bibitem{Gramain} A. Gramain, 
Le type d'homotopie du groupe des diff\'eomorphismes d'une surface compacte,
\textit{Ann. Sci. École Norm. Sup.} (4) 6 (1973), 53--66. 

\bibitem{Haken} W. Haken, Some results on surfaces in $3$3-manifolds, 1968
\textit{Studies in Modern Topology} 39-–98, Math. Assoc. Amer. (distributed by
Prentice-Hall, Englewood Cliffs, N.J.).

\bibitem{Hamilton} R. Hamilton, The inverse function theorem of Nash and
Moser, \textit{Bull. Amer. Math. Soc.} (N.S.) 7 (1982), 65--222.

\bibitem{HatcherHaken} A. Hatcher, Homeomorphisms of sufficiently large
$P\sp{2}$-irreducible $3$-manifolds, \textit{Topology} 15 (1976), 343--347.

\bibitem{HatcherS1timesS2} A. Hatcher, On the diffeomorphism group of
$S^1\times S^2$, \textit{Proc. Amer. Math. Soc.} 83 (1981), 427-430.

\bibitem{HatcherS1timesS2new} A. Hatcher, On the diffeomorphism group of
$S^1\times S^2$, revised version, manuscript at
\texttt{http://www.math.cornell.edu/~hatcher/\#papers}.

\bibitem{HatcherSmale} A. Hatcher, A proof of the Smale Conjecture,
$\Diff(S^3) \cong \O(4)$, \textit{Annals of Math.} 117 (1983), 553-607.

\bibitem{HatcherIncompressible} A. Hatcher, Spaces of incompressible
surfaces, manuscript at
\newline \texttt{http://www.math.cornell.edu/~hatcher/\#papers}.

\bibitem{Hempel} J. Hempel, 3-manifolds as viewed from the curve
complex, \textit{Topology} 40 (2001), 631--657.

\bibitem{HKMR} S. Hong, J. Kalliongis, D. McCullough, and J. H. Rubinstein,
Diffeomorphisms of elliptic $3$-manifolds, preprint.

\bibitem{I3} N. V. Ivanov, Groups of diffeomorphisms of Waldhausen
manifolds (Russian), in \textit{Studies in topology, II.,} Zap. Nau\v cn. 
Sem. Leningrad. Otdel. Mat. Inst. Steklov, (LOMI) 66 (1976), 172--176,
209.

\bibitem{I4} N. V. Ivanov, Diffeomorphism groups of Waldhausen
manifolds, \textit{J. Soviet Math.} 12 (1979), 115-118.

\bibitem{I5} N. V. Ivanov, Homotopy of spaces of diffeomorphisms
of some three-dimensional manifolds, in \textit{Studies in topology, IV.,}
Zap. Nauchn. Sem. Leningrad. Otdel. Mat. Inst. Steklov. (LOMI) 122
(1982), 72--103, 164--165.

\bibitem{I2} N. V. Ivanov, Homotopy of spaces of diffeomorphisms of some
    three-dimensional manifolds, \textit{J. Soviet Math.} 26 (1984),
     1646-1664.

\bibitem{Johnson} J. Johnson, Mapping class groups of medium distance
Heegaard splittings, to appear in \textit{Proc. Amer. Math. Soc.}

\bibitem{K-M} J. Kalliongis and D. McCullough, Isotopies of $3$-manifolds,
\textit{Topology Appl.} 71 (1996), 227--263.

\bibitem{Kriegl-Michor} A. Kriegl and P. Michor, \textit{The convenient
setting of global analysis,} Mathematical Surveys and Monographs, 53,
American Mathematical Society, Providence, RI, (1997).

\bibitem{McC} D. McCullough, Isometries of elliptic $3$-manifolds,
\textit{J. London Math. Soc.,} (2) 65 (2002), 167-182.

\bibitem{McCullough-Soma} D. McCullough and T. Soma, The Smale
conjecture for Seifert fibered spaces with hyperbolic base orbifold,
arXiv:1005.5061.

\bibitem{Palais} R. Palais, Local triviality of the restriction map for embeddings,
\textit{Comment. Math. Helv.} 34 (1960), 305-312.

\bibitem{Scott} P. Scott, The geometries of $3$-manifolds, \textit{Bull. London Math.
Soc.} 15 (1983), 401--487.

\bibitem{Smale} S. Smale, Diffeomorphisms of the $2$-sphere, \textit{Proc.
Amer. Math. Soc.} 10 (1959), 621-626.

\bibitem{Stacey} A. Stacey, Constructing smooth manifolds of
loop spaces, \textit{Proc. Lond. Math. Soc.} (3) 99 (2009) 195--216.

\bibitem{Thompson} A. Thompson, The disjoint curve property and genus $2$
manifolds, \textit{Topology Appl.} 97 (1999), 273--279.

\bibitem{Waldhausen} F. Waldhausen, Heegaard-Zerlegungen der $3$-Sph\"are,
\textit{Topology} 7 1968 195--203. 

\bibitem{Ziller} W. Ziller, The free loop space of globally symmetric
spaces, \textit{Invent. Math.} 41 (1977), 1--22.

\end{thebibliography}

\end{document}